\newcommand{\kom}[1]{}
\renewcommand{\kom}[1]{{\bf [#1]}}
 \def\1{\raisebox{2pt}{\rm{$\chi$}}}
\newtheorem{theorem}{Theorem}[section]
\newtheorem{lemma}[theorem]{Lemma}
\newtheorem{definition}[theorem]{Definition}
\newtheorem{remark}[theorem]{Remark}
\newcommand{\R}{{\mathbb R}}
\newcommand{\N}{{\mathbb N}}
\newcommand{\Z}{{\mathbb Z}}
 \newcommand{\eps}{{\varepsilon}}
 \def\1{\raisebox{2pt}{\rm{$\chi$}}}
\newcommand{\abs}[1]{\left|#1\right|}
\newcommand{\osc}{\operatorname{osc}}
\def\vint_#1{\mathchoice%
          {\mathop{\kern 0.2em\vrule width 0.6em height 0.69678ex depth -0.58065ex
                  \kern -0.8em \intop}\nolimits_{\kern -0.4em#1}}%
          {\mathop{\kern 0.1em\vrule width 0.5em height 0.69678ex depth -0.60387ex
                  \kern -0.6em \intop}\nolimits_{#1}}%
          {\mathop{\kern 0.1em\vrule width 0.5em height 0.69678ex
              depth -0.60387ex
                  \kern -0.6em \intop}\nolimits_{#1}}%
          {\mathop{\kern 0.1em\vrule width 0.5em height 0.69678ex depth -0.60387ex
                  \kern -0.6em \intop}\nolimits_{#1}}}
\def\vintslides_#1{\mathchoice%
          {\mathop{\kern 0.1em\vrule width 0.5em height 0.697ex depth -0.581ex
                  \kern -0.6em \intop}\nolimits_{\kern -0.4em#1}}%
          {\mathop{\kern 0.1em\vrule width 0.3em height 0.697ex depth -0.604ex
                  \kern -0.4em \intop}\nolimits_{#1}}%
          {\mathop{\kern 0.1em\vrule width 0.3em height 0.697ex depth -0.604ex
                  \kern -0.4em \intop}\nolimits_{#1}}%
          {\mathop{\kern 0.1em\vrule width 0.3em height 0.697ex depth -0.604ex
                  \kern -0.4em \intop}\nolimits_{#1}}}
\newcommand{\kint}{\vint}
\newcommand{\aveint}[2]{\mathchoice%
          {\mathop{\kern 0.2em\vrule width 0.6em height 0.69678ex depth -0.58065ex
                  \kern -0.8em \intop}\nolimits_{\kern -0.45em#1}^{#2}}%
          {\mathop{\kern 0.1em\vrule width 0.5em height 0.69678ex depth -0.60387ex
                  \kern -0.6em \intop}\nolimits_{#1}^{#2}}%
          {\mathop{\kern 0.1em\vrule width 0.5em height 0.69678ex depth -0.60387ex
                  \kern -0.6em \intop}\nolimits_{#1}^{#2}}%
          {\mathop{\kern 0.1em\vrule width 0.5em height 0.69678ex depth -0.60387ex
                  \kern -0.6em \intop}\nolimits_{#1}^{#2}}}
\newcommand{\ud}{\, d}
\newcommand{\ol}{\overline}
\newcommand{\Om}{\Omega}
\newcommand{\dist}{\operatorname{dist}}
\newcommand{\vp}{\varphi}
\newcommand{\tr}{\operatorname{tr}}
\begin{document}

\title[Regularity for time-dependent tug-of-war games]{Local regularity for time-dependent tug-of-war games with varying probabilities}

\author[Parviainen]{Mikko Parviainen}
\address{Department of Mathematics and Statistics, University of
Jyvaskyla, PO~Box~35, FI-40014 Jyvaskyla, Finland}
\email{mikko.j.parviainen@jyu.fi}

\author[Ruosteenoja]{Eero Ruosteenoja}
\address{Department of Mathematics and Statistics, University of
Jyvaskyla, PO~Box~35, FI-40014 Jyvaskyla, Finland}
\email{eero.ruosteenoja@jyu.fi}

\date{\today}
\keywords{Parabolic $p(x,t)$-Laplacian, H\"older continuity, Harnack inequality, stochastic games, tug-of-war} \subjclass[2010]{35K20, 91A15.}

\begin{abstract} 
We study local regularity properties of value functions of time-dependent tug-of-war games. For games with constant probabilities we get local Lipschitz continuity. For more general games with probabilities depending on space and time we obtain H\"older and Harnack estimates. The games have a connection to the normalized $p(x,t)$-parabolic equation $(n+p(x,t))u_t=\Delta u+(p(x,t)-2) \Delta_{\infty}^N u$.
\end{abstract}
\maketitle

\section{Introduction}
\label{sec:intro}
In this paper we study local regularity properties of tug-of-war type games related to parabolic PDEs. First, we establish asymptotic Lipschitz continuity for the value functions of the game with constant probabilities, and then continue analyzing the regularity of a more general game with space and time dependent probabilities that we call $p(x,t)$-game. 

The value functions of this particular two player zero sum game satisfy the so called dynamic programming principle (hereafter DPP)
\[
\begin{split}
u_\eps(x,t)=&\frac{\alpha(x,t)}{2} (\sup_{B_{\eps}(x)} u(y,t-\frac{\eps^2}{2})+\inf_{B_{\eps}(x)} u(y,t-\frac{\eps^2}{2}))\\
&+\beta(x,t) \kint_{B_{\eps}(x)} u(y,t-\frac{\eps^2}{2}) \ud y,
\end{split}
\]
 which may arise for example from stochastic games or discretization schemes. In game terms, this equation can be heuristically interpreted as summing up the three different alternatives of the game round with the corresponding $(x,t)$-dependent probabilities while  the step takes $\frac{\eps^2}{2}$ time.

Lipschitz estimate for the game with constant probabilities is based on the good symmetry properties produced by utilizing cancellation strategies that allows us to directly obtain Lipschitz continuity. 
In the $p(x,t)$-case the symmetry properties and sharp cancellation effects break down. Moreover, global approaches to the problem are hampered by the of loss of translation invariance, which makes it hard to keep track of accumulated error. 

Our proofs are of local nature. The idea of the proof for H\"older continuity of $p(x,t)$-game arises from the stochastic game theory: we start the game simultaneously at two points $x$ and $z$ and try to pull the points closer to each other, where 'closer' is in terms of a suitable comparison function. In particular, in the stochastic terminology the process is a supermartingale.
 To show that we may pull the points closer in this sense, we may consider the process in the higher dimensional space by setting  $(x,z)\in \R^{2n}$ and then apply suitable strategies for such a game. There are several differences in the  parabolic setting compared to the elliptic proofs in  \cite{luirops13} and \cite{luirop} related to controlling the dynamic effects. Indeed, in the Lipschitz proof we utilize estimates for probability distributions on different time instances whereas in the elliptic case it suffices to deal with the long time limit distribution. In the case of the $p(x,t)$-game the resulting DPP is in $\R^{2n+1}$, and comparison functions will have to take the time direction into account. 
 
As an application, our results can be used to prove local Lipschitz continuity for the solutions to the normalized $p$-parabolic equation
\[
\begin{split}
(n+p)u_t=\abs{\nabla u}^{2-p}\text{div}(\abs{\nabla u}^{p-2} \nabla u)=\Delta u+(p-2)\Delta_{\infty}^N u,
\end{split}
\] 
where $\Delta_{\infty}^N u=\langle D^2 u \frac{\nabla u}{\abs{\nabla u}}, \frac{\nabla u}{\abs{\nabla u}}\rangle$ is the normalized or game theoretic infinity Laplacian.
This equation has been recently studied by Jin and Silvestre \cite{jins15}, Banerjee and Garofalo \cite{banerjeeg13},  Does \cite{does11}, as well as Manfredi, Parviainen and Rossi \cite{manfredipr10}. In the $p(x,t)$-case, we show that under suitable assumptions the value functions of the game converge to the unique viscosity solution of the Dirichlet problem to the normalized $p(x,t)$-parabolic equation
\[
\begin{split}
(n+p(x,t))u_t=\Delta u+(p(x,t)-2) \Delta_{\infty}^N u.
\end{split}
\]
 However, a priori our methods and results are not relying on the PDE techniques; rather they are quite different from those. 

The connection between the infinity Laplacian and tug-of-war games was established by Peres, Schramm, Sheffield and Wilson in \cite{peresssw09}, for the $p$-Laplacian in \cite{peress08} and for the normalized $p$-parabolic equation by Manfredi, Parviainen and Rossi in \cite{manfredipr10}, see also \cite{banerjeeg15b}.

This paper is organized as follows. In Section \ref{prel} we fix the notation and define the game. In Section \ref{constant} we assume that $p(x,t)\equiv p>2$ is a constant and obtain local asymptotic Lipschitz continuity for value functions by using game-theoretic methods. In Section \ref{Holder,Harnack} we get H\"older and Harnack estimates for the $p(x,t)$-game. In Section \ref{uniform} it is proved that value functions of the $p(x,t)$-game converge uniformly to a viscosity solution of the normalized $p(x,t)$-parabolic equation. In Section \ref{uniq} we show that there is a unique viscosity solution to the $p(x,t)$-parabolic equation with given continuous boundary data.

\noindent \textbf{Acknowledgements.} MP was supported by the Academy of Finland. ER was supported by the Vilho, Kalle and Yrj\"o V\"ais\"al\"a foundation.

\section{Preliminaries}\label{prel}
Throughout the paper $\Omega\subset \R^n$ is a bounded domain. If not mentioned otherwise, for $T>0$, $\Omega_T:=\Omega\times(0,T)$ is a parabolic cylinder with the parabolic boundary  
\[
\Gamma_T:=\{\partial \Omega\times [0,T]\}\cup \{\Omega\times \{0\}\}.
\]
For our game we also need the parabolic boundary strip of width $\eps>0$,
\[
\Gamma^\eps_T:=\left(\Gamma_\eps\times (-\frac{\eps^2}{2},T]\right)\cup \left(\Omega\times(-\frac{\eps^2}{2},0] \right).
\]
Here
\[
\Gamma_\eps:=\{x\in \R^n\setminus \Omega\ :\ \dist(x,\partial \Omega)\leq \eps\}
\]
is the $\eps$-boundary strip of $\Omega$.

For a measurable function $p:\Omega_T\rightarrow (2,\infty)$, we define the functions $\alpha:\Omega_T\rightarrow (0,1)$ and $\beta:\Omega_T\rightarrow (0,1)$,
\[
\alpha(x,t)=\frac{p(x,t)-2}{p(x,t)+n},\ \ \beta(x,t)=\frac{n+2}{p(x,t)+n}.
\]
Notice that $\alpha(x,t)+\beta(x,t)=1$ for all $(x,t)\in \Omega_T$. 

%For simplicity, we assume throughout the paper that
%\[
%\inf_{\Omega_T}p>2,
%\]
%which implies $\inf_{\Omega_T} \alpha>0$.

Next we define a tug-of-war type game, which we call $p(x,t)$-\emph{game} to emphasize the connection with $p(x,t)$-Laplacian, see Section \ref{uniform}. The game is a zero sum stochastic game between Player I and Player II in $\Omega_T$. Fix $\eps>0$. First a token is placed at $(x_0,t_0)\in \Omega_T$. With probability $\alpha(x_0,t_0)$, the players flip a fair coin, and the winner of the toss moves the token to a point
\[
(x_1,t_1)\in B_\eps(x_0)\times \{t_0-\frac{\eps^2}{2}\},
\]
according to his or her strategy. We use a notation $B_\eps(x_0)$ for an open ball centered at $x_0$ with radius $\eps$. With probability $\beta(x_0,t_0)$, the token moves according to the uniform probability to a random point $(x_1,t_1)$ in a set $B_\eps(x_0)\times \{t_0-\frac{\eps^2}{2}\}$, and in this paper we call such moves \emph{random vectors} for short. From $(x_1,t_1)$ the game continues according to the same rules, and the token moves to a point 
\[
(x_2,t_2)\in B_\eps(x_1)\times \{t_1-\frac{\eps^2}{2}\}.
\]
We denote by $(x_\tau,t_\tau)\in \Gamma^\eps_T$ the first point of the sequence on $\Gamma^\eps_T$. Then Player II pays Player I the payoff $F(x_\tau,t_\tau)$, where $F:\Gamma^\eps_T\rightarrow [-M,M]$ is a given measurable payoff function. Naturally, Player I tries to maximize the payoff and Player II tries to minimize it. The number of steps during the game is bounded,
\[
\tau\leq 2\eps^{-2}t_0+1\leq 2\eps^{-2}T+1.
\]
The value function $u_\eps$ of the game is 
\[
u_\eps(x_0,t_0)=\sup_{S_\text{I}} \inf_{S_\text{II}}\mathbb{E}^{(x_0,t_0)}_{S_\text{I},S_\text{II}}[F(x_\tau,t-\frac{\eps^2}{2}\tau)],
\]
where $S_\text{I}$ and $S_\text{II}$ are strategies of Player I and Player II. For further details on stochastic vocabulary regarding tug-of-war games, we refer to \cite{peresssw09}. 

Since the number of steps during the game is bounded, adding a bounded running payoff to the game would not cause any new difficulties. In the case of unlimited number of steps the situation is different, see \cite{ruosteenoja16}.  

A crucial property of value functions of tug-of-war type games is DPP characterization. In the parabolic case this characterization is much easier to verify than in the elliptic case. Moreover, proving DPP characterization for value functions of our game does not differ from the case where the probabilities $\alpha$ and $\beta$ are space independent. The following two lemmas can be proved by using the techniques of \cite{luirops14}. We use the notation
\[
\vint_{B_r} u\, \mathrm{d}x:=\frac{1}{|B_r|}\int_{B_r} u\, \mathrm{d}x
\]
for the mean value of a function $u$ in a ball $B_r$. Here $|B_r|$ denotes the Lebesgue measure of $B_r$.
\begin{lemma}
For given $\eps>0$ and payoff function $F$ on $\Gamma^\eps_T$, there is a unique measurable function $u$ equal to $F$ on $\Gamma^\eps_T$ and satisfying the parabolic DPP
\[
\begin{split}
u(x,t)=&\frac{\alpha(x,t)}{2} (\sup_{B_{\eps}(x)} u(y,t-\frac{\eps^2}{2})+\inf_{B_{\eps}(x)} u(y,t-\frac{\eps^2}{2}))\\
&+\beta(x,t) \kint_{B_{\eps}(x)} u(y,t-\frac{\eps^2}{2}) \ud y
\end{split}
\]
for $(x,t)\in \Omega_T$.
\end{lemma}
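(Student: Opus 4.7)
The plan is to exploit the fully parabolic structure of the DPP: the value at $(x,t)$ depends only on values on the single earlier time slice $\Omega \times \{t - \eps^2/2\}$, so I would construct $u$ iteratively on time slabs of width $\eps^2/2$, propagating forward from the data prescribed on $\Gamma_T^\eps$. First set $u := F$ on $\Gamma_T^\eps$, and then proceed by induction on $k \ge 0$: at stage $k$ define $u$ on the slab $\Omega \times (k\eps^2/2, (k+1)\eps^2/2]$ by letting $u(x,t)$ equal the right-hand side of the DPP. This is well-defined because $t - \eps^2/2 \in (-\eps^2/2, k\eps^2/2]$ and $B_\eps(x) \subset \Omega \cup \Gamma_\eps$ for $x \in \Omega$, so the sup, inf and average on the right only access values of $u$ already constructed at earlier stages. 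Boundedness by $M$ is preserved because the DPP is a convex combination with weights $\alpha/2$, $\alpha/2$ and $\beta$ summing to one. Since the total number of slabs needed is at most $\lceil 2\eps^{-2}T\rceil + 1$, the induction terminates after finitely many steps and produces $u$ on all of $\Omega_T$.

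Uniqueness will follow from an identical induction. Any two DPP-solutions agree on $\Gamma_T^\eps$ by assumption; if they agree on $\Gamma_T^\eps \cup (\Omega \times (0, k\eps^2/2])$, then the DPP expresses the value at any $(x,t)$ in the next slab explicitly in terms of the common earlier-time data, forcing agreement on the next slab.

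The main obstacle is the measurability check needed to close the induction. The averaged integral over $B_\eps(x)$ is measurable in $(x,t)$ by Fubini, and $\alpha$, $\beta$ are measurable since $p$ is; however, for a merely measurable bounded function $v$ on $\R^n$ the map $x \mapsto \sup_{y \in B_\eps(x)} v(y)$ over the \emph{open} ball is not automatically Borel measurable, because the domain of the supremum varies with $x$. I would handle this following the techniques of \cite{luirops14} developed for the elliptic $p(x)$-game, approximating the open-ball supremum by a countable supremum over a dense family (which reduces the problem to measurability of countably many translations) and invoking completion to obtain Lebesgue measurability on each slab. Since the time variable enters only as a parameter on each slice, these elliptic arguments carry over without essential modification, and this is the only genuinely delicate ingredient in the proof.
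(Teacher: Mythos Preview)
Your proposal is correct and aligns with the paper's own treatment: the paper does not give a proof but simply states that the lemma ``can be proved by using the techniques of \cite{luirops14}''. Your time-slab induction exploiting the parabolic structure, together with the measurability argument for the sup/inf borrowed from \cite{luirops14}, is exactly the intended route.
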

 
\begin{lemma}
Given $\eps>0$ and a bounded payoff function $F$ on $\Gamma^\eps_T$, the value function $u_\eps$ satisfies the parabolic DPP.
\end{lemma}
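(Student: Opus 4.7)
The plan is the standard one-step conditioning argument. Writing $\mathcal{L}u$ for the right-hand side of the DPP, I prove the two inequalities $u_\eps \geq \mathcal{L}u_\eps$ and $u_\eps \leq \mathcal{L}u_\eps$ separately. The underlying one-step identity is that, for any pair of strategies $S_I, S_{II}$ and starting position $(x_0, t_0) \in \Omega_T$, the tower property of conditional expectation yields
\[
\mathbb{E}^{(x_0,t_0)}_{S_I,S_{II}}[F(x_\tau,t_\tau)] = \frac{\alpha(x_0,t_0)}{2}\bigl(J_I + J_{II}\bigr) + \beta(x_0,t_0)\vint_{B_\eps(x_0)} J(y)\ud y,
\]
where $J_I$, $J_{II}$, $J(y)$ denote the expected continuation payoffs from the respective images of the first round under the strategies shifted by one step. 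Since $\tau \leq 2\eps^{-2}T+1$ deterministically, this reduces the DPP to manipulating suprema and infima over strategies on the truncated game.

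Before taking suprema and infima I need measurability of $u_\eps$, which I prove by backward induction on the maximal number of rounds that can still take place. For $t_0 \in (0, \eps^2/2]$ the game terminates after a single round and $u_\eps(x_0, t_0) = \mathcal{L}F(x_0, t_0)$ is explicitly measurable. Assuming measurability of $u_\eps(\cdot, s)$ for every $s \leq t_0 - \eps^2/2$, the map $x \mapsto \sup_{B_\eps(x)} u_\eps(\cdot, t_0-\eps^2/2)$ is lower semicontinuous in $x$ and $x \mapsto \inf_{B_\eps(x)} u_\eps(\cdot, t_0-\eps^2/2)$ is upper semicontinuous, so $\mathcal{L}u_\eps(\cdot, t_0)$ is measurable as well.

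For the inequality $u_\eps \geq \mathcal{L}u_\eps$, fix $\eta > 0$. Let Player I follow a strategy $S_I^\eta$ that on the first round, if the fair coin is won, moves to a point $y^I \in B_\eps(x_0)$ with $u_\eps(y^I, t_0-\eps^2/2) \geq \sup_{B_\eps(x_0)} u_\eps(\cdot, t_0-\eps^2/2) - \eta$, and that from any subsequent position plays $\eta$-optimally for the remaining game. Plugging $S_I^\eta$ into the one-step identity and using $J_I \geq u_\eps(y^I, t_0-\eps^2/2) - \eta$, $J_{II} \geq \inf_{B_\eps(x_0)} u_\eps(\cdot, t_0-\eps^2/2) - \eta$, and $J(y) \geq u_\eps(y, t_0-\eps^2/2) - \eta$ (which hold by definition of $u_\eps$), then taking $\inf_{S_{II}}$ and sending $\eta \to 0$, yields the desired bound. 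The reverse inequality is obtained symmetrically with Player II choosing a point near the infimum.

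The main technical obstacle is the measurable selection of $\eta$-optimal continuation strategies as a function of the realized first move: the strategy used from the new position must be jointly measurable in the current vertex. Because the total number of rounds is deterministically bounded by $2\eps^{-2}T+1$, this can be handled inductively in parallel with the measurability claim above by covering the relevant time slice of $\Omega$ with countably many open sets on which a common strategy is nearly optimal and patching via a measurable partition. This is the standard construction employed in \cite{luirops14}, which is why the paper appeals to those techniques rather than rewriting the argument.
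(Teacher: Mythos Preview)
The paper does not actually give a proof of this lemma; it only states that ``The following two lemmas can be proved by using the techniques of \cite{luirops14}.'' Your sketch is exactly the standard one-step conditioning / backward-induction argument from that reference, including the measurable-selection step, so it is correct and matches the intended approach.
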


A typical idea to estimate the value function $u_\eps$ is to fix a strategy for one of the players. We may also localize the situation by using a new stopping time $\tau^*\leq \tau$. The following lemma is a standard tool for fixed strategies. Again we omit the proof which is similar to \cite[Lemma 2.3]{ruosteenoja16}.

\begin{lemma}\label{fix.strategia} If the game starts from $(x_0,t_0)\in \Omega_T$ and $\tau^*<2t_0\eps^{-2}$ is a stopping time, then
\[
u_\eps(x_0,t_0)\geq \inf_{S_\text{\emph{II}}}\mathbb{E}^{(x_0,t_0)}_{S^0_\text{\emph{I}},S_\text{\emph{II}}}u_\eps(x_{\tau^*},t_0-\frac{\tau^*}{2}\eps^2)
\] 
for any fixed strategy $S^0_\text{\emph{I}}$ of Player I, and
\[
u_\eps(x_0,t_0)\leq \sup_{S_\text{\emph{I}}}\mathbb{E}^{(x_0,t_0)}_{S_\text{\emph{I}},S^0_\text{\emph{II}}}u_\eps(x_{\tau^*},t_0-\frac{\tau^*}{2}\eps^2)
\]
for any fixed strategy $S^0_\text{\emph{II}}$ of Player II.
\end{lemma}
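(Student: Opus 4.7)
The plan is to carry out a standard martingale argument based on the DPP satisfied by $u_\eps$. I will only write out the first inequality; the second is fully symmetric.

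First, I would fix $\eta>0$ and define an auxiliary (near-optimal) strategy $S^\eta_\text{II}$ for Player II by requiring that, at step $k$, whenever Player II is called upon to move the token, the chosen point $x^-_{k+1}\in B_\eps(x_k)$ satisfies
\[
u_\eps\bigl(x^-_{k+1},t_k-\tfrac{\eps^2}{2}\bigr)\le \inf_{B_\eps(x_k)} u_\eps\bigl(\,\cdot\,,t_k-\tfrac{\eps^2}{2}\bigr)+\eta\,2^{-k}.
\]
Such a measurable selection exists since $u_\eps$ is measurable. Write $t_k:=t_0-k\eps^2/2$ and set $M_k:=u_\eps(x_k,t_k)$, with $(\F_k)$ the natural filtration of the game history under $S^0_\text{I}$ and $S^\eta_\text{II}$.

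The key step is to show that $M_k-\eta\sum_{j<k}2^{-j}$ is a supermartingale. Conditionally on $\F_k$, the next position $x_{k+1}$ is distributed as Player~I's deterministic move $x^+_{k+1}$ with probability $\alpha(x_k,t_k)/2$, as Player~II's move $x^-_{k+1}$ with probability $\alpha(x_k,t_k)/2$, and uniformly on $B_\eps(x_k)$ with probability $\beta(x_k,t_k)$; since $u_\eps(x^+_{k+1},t_k-\eps^2/2)\le\sup_{B_\eps(x_k)}u_\eps(\cdot,t_k-\eps^2/2)$ and $u_\eps(x^-_{k+1},t_k-\eps^2/2)$ is within $\eta 2^{-k}$ of the infimum, the DPP of the preceding lemma yields
\[
\mathbb{E}\bigl[M_{k+1}\mid \F_k\bigr]\le u_\eps(x_k,t_k)+\tfrac{\alpha(x_k,t_k)}{2}\,\eta\,2^{-k}\le M_k+\eta\,2^{-k},
\]
on $\{\tau>k\}$. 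On $\{\tau\le k\}$ the process is frozen at the payoff, so the supermartingale property extends to all $k$.

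Next I would apply the optional stopping theorem to the stopping time $\tau^*$, which is legitimate because $\tau^*\le\tau\le 2T\eps^{-2}+1$ is bounded. This gives
\[
u_\eps(x_0,t_0)=M_0\ge \mathbb{E}^{(x_0,t_0)}_{S^0_\text{I},S^\eta_\text{II}}\!\bigl[M_{\tau^*}\bigr]-2\eta\ge \inf_{S_\text{II}}\mathbb{E}^{(x_0,t_0)}_{S^0_\text{I},S_\text{II}}\!\bigl[u_\eps(x_{\tau^*},t_0-\tfrac{\tau^*}{2}\eps^2)\bigr]-2\eta,
\]
and letting $\eta\to 0$ yields the claim. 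The second inequality is obtained by switching the roles of the two players and working with a near-optimal strategy for Player~I.

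The main obstacle is mostly bookkeeping: one has to make sure the near-optimal selection $S^\eta_\text{II}$ is measurable with respect to the game history, and that the supermartingale inequality is handled correctly after the game has stopped (so that it truly holds for all $k$, not just $k<\tau$). The boundedness of $\tau^*$ makes the optional stopping step routine; no uniform integrability argument is needed. Apart from these points, the argument is a direct translation of the DPP into an expectation inequality.
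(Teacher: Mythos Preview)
Your argument is correct and is exactly the standard supermartingale approach that the paper has in mind: the paper omits the proof and refers to \cite[Lemma 2.3]{ruosteenoja16}, where precisely this near-optimal strategy and DPP-based supermartingale computation is carried out. Nothing further is needed.
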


%The parabolic $p(x)$-Laplace equation is
%\[
%(n+p(x))u_t=\Delta^N_{p(x)}u,
%\]
%where
%\begin{equation}\label{p(x)}
%\Delta^N_{p(x)}u:=|\nabla u|^{2-p(x)}\divt(|\nabla u|^{p(x)-2}\nabla u)
%\end{equation}
%is the normalized $p(x)$-Laplacian.

\section{Lipschitz estimate for $p$-game}\label{constant} In this section we study local regularity properties of $p(x,t)$-game when $p(x,t)\equiv p>2$ is a constant. Then the probability functions are also constants, $\alpha(x,t)\equiv \alpha\in (0,1)$ and $\beta(x,t)\equiv \beta\in (0,1)$. This game was defined in \cite{manfredipr10}.

We start with constant $p$ for simplicity: these games have symmetry properties suitable for cancellation strategy idea, developed in \cite{luirops13}, to get asymptotic Lipschitz continuity. In order to establish this, we use the following stochastic estimate, which combines well known Hoeffding's and Kolmogorov's inequalities. 

\begin{lemma}\label{lemma 3.1}
Consider i.i.d.\ symmetric real-valued random variables $Y_m$, $m=1,...,N,$ for which $|Y_m|\leq b$ for some $b>0$. Then for $\lambda>0$ the following inequalities hold:
\[
\mathbb{P}(|Y_1+...+Y_N|\geq \lambda)\leq 2 \exp\left(-\frac{\lambda^2}{2Nb^2} \right),
\]
\[
\mathbb{P}(\max_{1\leq m\leq N}|Y_1+...+Y_m|\geq \lambda)\leq 2 \mathbb{P}(|Y_1+...+Y_N|\geq \lambda).
\]
\end{lemma}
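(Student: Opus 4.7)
The plan is to treat the two inequalities separately, since they are classical but rest on independent ideas: the first is an exponential Chernoff-type bound, the second is L\'evy's reflection principle for sums of symmetric variables.

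For the first inequality, I would use the standard moment generating function technique. Set $S_N=Y_1+\ldots+Y_N$. The key auxiliary estimate is that for any symmetric random variable $Y$ with $|Y|\le b$ and any $s\in\R$,
\[
\E[e^{sY}]=\E[\cosh(sY)]\le \E\bigl[e^{s^2Y^2/2}\bigr]\le e^{s^2b^2/2},
\]
where the first step uses symmetry (the $\sinh$ part vanishes in expectation), and the second uses the elementary pointwise bound $\cosh(u)\le e^{u^2/2}$. Combined with independence, this gives $\E[e^{sS_N}]\le e^{s^2Nb^2/2}$. A Chebyshev/Markov bound then yields $\mathbb{P}(S_N\ge \lambda)\le e^{-s\lambda}e^{s^2Nb^2/2}$, and optimizing in $s>0$ by taking $s=\lambda/(Nb^2)$ produces $\mathbb{P}(S_N\ge \lambda)\le \exp(-\lambda^2/(2Nb^2))$. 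The symmetric bound on $\mathbb{P}(-S_N\ge \lambda)$ is identical (again using symmetry of $S_N$), and the union of the two one-sided events gives the factor $2$.

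For the second inequality, the idea is a first-passage decomposition together with the fact that the tail $S_N-S_k$ is independent of $S_1,\ldots,S_k$ and remains symmetric. Introduce the disjoint events
\[
A_k=\{|S_1|<\lambda,\ldots,|S_{k-1}|<\lambda,\ |S_k|\ge\lambda\},\qquad k=1,\ldots,N,
\]
whose union equals $\{\max_{1\le m\le N}|S_m|\ge \lambda\}$. On $A_k\cap\{S_k\ge \lambda\}$, the event $\{S_N-S_k\ge 0\}$ forces $S_N\ge \lambda$, hence $|S_N|\ge\lambda$; similarly on $A_k\cap\{S_k\le-\lambda\}$ the event $\{S_N-S_k\le 0\}$ forces $|S_N|\ge\lambda$. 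Because $S_N-S_k=Y_{k+1}+\ldots+Y_N$ is independent of $(Y_1,\ldots,Y_k)$ and symmetric, each of those conditional probabilities is at least $1/2$, giving $\mathbb{P}(A_k\cap\{|S_N|\ge\lambda\})\ge \tfrac12\mathbb{P}(A_k)$. Summing over $k$ and using disjointness of the $A_k$'s yields the desired inequality.

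The routine calculations (the $\cosh$ inequality, the optimization in $s$) present no difficulty; the step that requires the most care is the second one, where one has to argue cleanly that conditioning on $A_k$ preserves the symmetry and independence of $S_N-S_k$ and then handle the two signs of $S_k$ in parallel. If I wanted to avoid reconstructing the stopping-time argument I could instead invoke L\'evy's maximal inequality directly, noting that $|\cdot|$ is a norm and the $Y_m$ are symmetric and independent, but writing the argument out via the events $A_k$ is essentially the same amount of work and self-contained.
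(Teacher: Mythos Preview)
Your argument is correct and standard. The paper does not actually supply a proof of this lemma: it merely states it and notes that it ``combines well known Hoeffding's and Kolmogorov's inequalities,'' which is precisely what you have written out (the first part is Hoeffding's bound via the moment generating function and $\cosh u\le e^{u^2/2}$, the second is L\'evy's maximal inequality via first-passage decomposition and symmetry of the tail $S_N-S_k$).
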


When the game starts from $(x_0,t_0)\in \Omega_T$ and Player I follows \emph{a cancellation strategy with target $z$}, she tries to cancel the earliest move of Player I which she has not yet canceled. If there are no moves to cancel, she tries to pull the token to the direction of vector $(z-x_0)\in \Omega$. Notice that Player I pays no attention to random moves. 

We want to use the cancellation strategy to prove asymptotic Lipschitz estimate for the $p(x,t)$-game with constant $p$. The two main difficulties are the possibility to reach the maximum number of steps too soon and the case of different time levels. We estimate the probability for reaching maximum number of steps in the proof of Theorem \ref{Liplemma}, and the problem of different time levels is solved in Theorem \ref{aika-Lip}.

\begin{theorem}\label{Liplemma} 
Suppose that $B_{6r}(z_0)\subset \Omega$, where $0<\eps<r<\left(\frac{\alpha T}{6}\right)^\frac12$. Then, for points $(x,t),(y,t)\in B_r(z_0)\times (\frac{6r^2}{\alpha},T)\subset \Omega_T$ and for sufficiently small $\eps$, the value function $u_\eps$ satisfies the Lipschitz estimate 
\[
\left|u_\eps(x,t)-u_\eps(y,t)\right|\leq C(p,n)\frac{\left|x-y\right|}{r}\left\|u_\eps\right\|_\infty+C'(p,n)\frac\eps r\left\|u_\eps\right\|_\infty.
\] 
%Take $x,y\in \Omega$, $\left|x-y\right|>\eps$, such that for the midpoint $z$ of $[x,y]$ and for some $R\in(\eps,\frac12)$ we have $x,y\in B_R(z)$ and $B_{6R}(z)\subset \Omega$. If the game starts at time $t_0\geq 2$, the value function $u_\eps$ satisfies Lipschitz estimate
\end{theorem}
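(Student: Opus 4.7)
The plan is to introduce a synchronous coupling of two games started at $(x,t)$ and $(y,t)$---sharing coin-flip outcomes and random vectors---and to fix cancellation strategies so that, at a common stopping time $\tau^\ast$, the two tokens are close in space at the \emph{same} intermediate time $t-\tau^\ast\eps^2/2$. Lemma \ref{fix.strategia} can then be applied to each game individually, and the two resulting bounds can be compared directly since they involve $u_\eps$ at matching time instants. By symmetry it suffices to bound $u_\eps(x,t)-u_\eps(y,t)$ from above.

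Concretely, in the game started at $(x,t)$ fix Player II's strategy as the cancellation strategy with target $y$, and in the game started at $(y,t)$ fix Player I's strategy as the cancellation strategy with target $x$, so that in the coupling the two trajectories are pulled toward each other. Choose $\tau^\ast$ to be the minimum of the first exit time from $B_{2r}(z_0)$ of either trajectory, the first time $|x_k-y_k|\le\eps$, and a deterministic step budget $N$ of order $(|x-y|/\eps)^2$; the standing hypothesis $t>6r^2/\alpha$ is precisely what ensures $N\eps^2/2<t$ so that Lemma \ref{fix.strategia} applies. The lemma then gives
\[
u_\eps(x,t)\le\sup_{S_{\mathrm{I}}}\mathbb{E}^{(x,t)}\!\bigl[u_\eps\bigl(x_{\tau^\ast}, t-\tfrac{\tau^\ast\eps^2}{2}\bigr)\bigr],\quad u_\eps(y,t)\ge\inf_{S_{\mathrm{II}}}\mathbb{E}^{(y,t)}\!\bigl[u_\eps\bigl(y_{\tau^\ast}, t-\tfrac{\tau^\ast\eps^2}{2}\bigr)\bigr].
\]
Using Lemma \ref{lemma 3.1} to control both the running maximum of uncancelled coin-flip moves and the cumulative sum of random vectors, one shows that with probability at least $1-C|x-y|/r-C\eps/r$ the event $\{|x_{\tau^\ast}-y_{\tau^\ast}|\le\eps,\ \tau^\ast<N\}$ occurs, so the two tokens agree up to $\eps$ at a common time. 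On this good event a one-step DPP comparison on overlapping balls bounds $|u_\eps(x_{\tau^\ast},\cdot)-u_\eps(y_{\tau^\ast},\cdot)|$ by $C(\eps/r)\|u_\eps\|_\infty$; on the exceptional event one simply uses $\|u_\eps\|_\infty$ weighted by its small probability. Subtracting the two displays above and collecting these two contributions yields the claimed estimate.

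The main obstacle, as emphasised in the paragraph preceding the statement, is the ``maximum number of steps'' issue. Because the cancellation drift closes the gap between the two trajectories only at the $\eps\sqrt{N}$ scale, $N$ has to be of order $(|x-y|/\eps)^2$, and one must show that with probability at most $O(|x-y|/r+\eps/r)$ the game either leaves $B_{2r}(z_0)$ prematurely or fails to bring the two tokens within distance $\eps$ before exhausting the step budget $2t\eps^{-2}$. Unlike in the elliptic case \cite{luirops13}, one cannot appeal to the stationary exit distribution of a random walk; instead Lemma \ref{lemma 3.1} has to be invoked at several intermediate time steps, which is exactly the ``estimates for probability distributions on different time instances'' mentioned in the introduction. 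Calibrating $N$ against the two competing error terms so as to produce a clean $|x-y|/r+\eps/r$ bound is the technical heart of the argument.
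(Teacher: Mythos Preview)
Your proposal departs from the paper's argument in several ways, and at least one of the departures leads to a genuine gap.

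\textbf{The step budget is too small.} You take $N$ of order $(|x-y|/\eps)^2$ and claim that with probability at least $1-C|x-y|/r-C\eps/r$ the two tokens come within $\eps$ of each other before step $N$. This cannot hold at that scale. In $N$ steps there are roughly $\alpha N$ fair coin tosses, and the cancellation mechanism needs one side to accumulate a surplus of $k:=|x-y|/\eps$ wins. A simple random walk run for $M\sim\alpha k^2$ steps reaches level $k$ only with probability bounded by a constant depending on $\alpha$; the relevant quantity $k/\sqrt{M}$ is of order $1/\sqrt{\alpha}$, not $|x-y|/r$. To obtain a failure probability of order $|x-y|/r$ one needs $M\sim (r/\eps)^2$ tosses, i.e.\ essentially the \emph{full} available time $2t_0\eps^{-2}$. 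This is exactly what the paper does: it works with the maximal step count, and the hypothesis $t_0>6r^2/\alpha$ is used not to guarantee $N\eps^2/2<t$ (which would be nearly automatic with your small $N$) but in the combinatorial bound $\widetilde P_0\le\eps/(3r)$ for the central binomial probability after $\lceil\tfrac{\alpha}{2}\eps^{-2}t_0\rceil$ coin tosses. Your reading of where the time lower bound enters is therefore off, and your chosen $N$ cannot deliver the stated probability.

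\textbf{The final comparison step is circular.} Even granting the good event $\{|x_{\tau^\ast}-y_{\tau^\ast}|\le\eps\}$, your ``one-step DPP comparison on overlapping balls'' does not yield $|u_\eps(x_{\tau^\ast},\cdot)-u_\eps(y_{\tau^\ast},\cdot)|\le C(\eps/r)\|u_\eps\|_\infty$ without already knowing a Lipschitz estimate for $u_\eps$. The paper avoids this entirely: both trajectories are pulled toward the \emph{midpoint} $z$ rather than toward each other, and on the good event (Condition~1) the stopped position equals $z$ plus the sum of the random vectors. Since the random-vector process and the tug-of-war coin tosses have the same law in the two games (the stopping conditions are defined symmetrically), the stopped positions are \emph{equal in distribution}, so their contributions to the two expectations cancel exactly---no $\eps$-scale regularity is needed. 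The bound on $\sum\overline P_j$ (Conditions~2 and~3) is then taken directly from the elliptic result \cite[Lemma 3.1]{luirops13}, and the genuinely parabolic work is the estimate of $\delta$, the probability that the full step budget is exhausted, handled via a Hoeffding bound on the number of tug-of-war rounds together with a reflection-type argument (the events $D$ and $E$).
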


\begin{proof}
Because of the error term, we may suppose that $|x-y|\geq \eps$. Let $z$ be the midpoint of $[x,y]\subset \Omega$ and suppose first that 
\[
u_\eps(y,t)\geq u_\eps(x,t).
\]
When the game starts from $(x,t)=:(x_0,t_0)$, Player II follows the cancellation strategy with a target $z$. Let us define the stopping time $\tau^*$. There are four conditions to stop the game: 
\begin{enumerate} \item
Player II wins $\lceil \left|x-z\right|/\eps \rceil$ fair coin tosses more than Player I. \item Player I wins $\lceil r/\eps \rceil$ fair coin tosses more than Player II. \item The sum of random vectors has length larger than $r$. \item We reach the maximum number of steps.
\end{enumerate}
When the game starts from $(y,t)=:(y_0,t_0)$, Player I follows the cancellation strategy trying to pull towards $z$, and we define $\tau^*$ as before by changing the roles of the players. By using the cancellation effect and Lemma \ref{fix.strategia}, we obtain
\[
\left|u_\eps(x_0,t_0)-u_\eps(y_0,t_0)\right|\leq  2\left\|u_\eps\right\|_\infty\sum^k_{j=1}\overline{P}_j+2\delta\left\|u_\eps\right\|_\infty,
\]
where $\overline{P}_j$ is the probability that $\tau^*=j$ and the game ended because of Condition 2 or 3, and $\delta$ is the probability that the game ended because the maximum number of steps was reached. The number $k$ is the maximum number of steps during the game, $k=\lceil 2\eps^{-2}t_0 \rceil$.

We get an upper estimate for $\sum \overline{P}_j$ from \cite[Lemma 3.1]{luirops13}. The lemma gives an upper bound $C(p,n)|x-y|/r$ for the probability $P'$ that the tug-of-war with noise ends because of Condition 2 or 3. Since there is not Condition 4 in the elliptic case (there is not an upper bound for the number of steps during the game), we have
\[
\sum^k_{j=1}\overline{P}_j\leq P'\leq C(p,n)\frac{|x-y|}{r}.
\] 
Hence, we get
\begin{equation}\label{lipeq1}
\left|u_\eps(x,t)-u_\eps(y,t)\right|\leq C(p,n)\frac{\left|x-y\right|}{r}\left\|u_\eps\right\|_\infty+2\delta \left\|u_\eps\right\|_\infty.
\end{equation}

The previous inequality also holds if $u_\eps(x,t)>u_\eps(y,t)$, which can be seen by fixing a cancellation strategy for Player I when starting from $(x,t)$ and for Player II when starting from $(y,t)$. 

The main part of this proof is to estimate the probability $\delta$ that the game ends when the maximum number $\lceil 2t_0/\eps^2\rceil$ of steps is reached. First we need a rough estimate for the number of fair coin tosses between the players during the game. Denote by $Z_m$ the Bernoulli variables with $Z_m\in \{0,1\}$ and $\mathbb{P}(Z_m=1)=\alpha$. Define
\[
A:=\left\{\sum^{l}_{m=1}Z_m>\frac\alpha2 l\ \text{for all}\ l\geq \eps^{-1}\right\}.
\]
We estimate 
\begin{align*}
\mathbb{P}(A^c)&=\mathbb{P}\left(\sum^{l}_{m=1}Z_m\leq \frac\alpha2 l\ \text{for some}\ l\geq \eps^{-1} \right)\\
& \leq \sum_{l\geq \eps^{-1}}\mathbb{P}\left(\sum^{l}_{m=1}Z_m\leq \frac\alpha2 l\right)\\
& \leq \sum_{l\geq \eps^{-1}}\mathbb{P}\left(\left|\sum^{l}_{m=1}Z_m-l\alpha \right|\geq \frac\alpha2 l\right)\\
& = \sum_{l\geq \eps^{-1}}\mathbb{P}\left(\left|\sum^{l}_{m=1}(Z_m-\alpha) \right|\geq \frac\alpha2 l\right).
\end{align*}
Using Lemma \ref{lemma 3.1} with $Y_m=Z_m-\alpha$, $\lambda=\frac\alpha2 l$, $b=1$ and $N=l$ gives
\[
\sum_{l\geq \eps^{-1}}\mathbb{P}\left(\left|\sum^{l}_{m=1}(Z_m-\alpha) \right|\geq \frac\alpha2 l\right)\leq \sum_{l\geq \eps^{-1}}2\  \text{exp}(-\frac{\alpha^2}{8}l)\leq O(\eps).
\]
Hence, for small enough $\eps$ there is a constant $C'(p,n)>0$ such that 
\begin{equation}\label{prob.A}
\mathbb{P}(A)\geq 1-C'(p,n)\frac{\eps}{r}.
\end{equation}

Supposing that $\lceil \frac\alpha2 \eps^{-2} t_0\rceil$ is an even number, we estimate combinatorially the probability $\widetilde{P}_0$ that after exactly $\lceil \frac\alpha2 \eps^{-2} t_0\rceil$ fair coin flips there have been exactly the same number of heads and tails, 
\begin{align*}
\widetilde{P}_0&=\binom{\lceil \frac\alpha2 \eps^{-2} t_0\rceil}{\frac12\lceil \frac\alpha2 \eps^{-2} t_0\rceil}\left(\frac12\right)^{\lceil \frac\alpha2 \eps^{-2} t_0\rceil}\\
&=\frac12 \frac34 \frac56 ... \frac{\lceil \frac\alpha2 \eps^{-2} t_0\rceil-1}{\lceil \frac\alpha2 \eps^{-2} t_0\rceil}\\
&\leq \left(\frac12 \frac23 \frac34 ... \frac{\lceil \frac\alpha2 \eps^{-2} t_0\rceil}{\lceil \frac\alpha2 \eps^{-2} t_0\rceil+1}\right)^{\frac12}\\
& =\left(\frac{1}{\lceil \frac\alpha2 \eps^{-2} t_0\rceil+1}\right)^{\frac12}\\
& \leq \frac{\eps}{3r},
\end{align*} 
where in the last inequality we used the requirement $t_0>\frac{6r^2}{\alpha}$. For probability $\widetilde{P}_k$, $k\in \Z$, that after  $\lceil \frac\alpha2 \eps^{-2} t_0\rceil$ of fair coin flips there have been $k$ heads more than tails, we have $\widetilde{P}_k\leq \widetilde{P}_0$. (When $k$ is negative, $\widetilde{P}_k$ means that there have been $-k$ tails more than heads.) We get the estimate
\[
\sum^{\lceil |x-y|/\eps\rceil}_{k=-\lceil |x-y|/\eps\rceil}\widetilde{P}_k\leq \left(\frac{2|x-y|}{\eps}+1\right)\widetilde{P}_0\leq \frac{|x-y|}{r}. 
\] 
Denote by D an event that the event $A$ occurred and at the time $\lceil \frac\alpha2 \eps^{-2} t_0\rceil$ of fair coin flips there have been at least $\lceil \frac{|x-y|}{\eps}\rceil$ heads more than tails. Moreover, denote by E an event that the event $A$ occurred and there have been at least $\lceil \frac{|x-y|}{\eps}\rceil$ heads more than tails at some point before $\lceil \frac\alpha2 \eps^{-2}t_0\rceil$ fair coin flips. By the previous estimate we have
\begin{align*}
\mathbb{P}(D)&\geq \frac12 \left(1-\frac{|x-y|}{r}\right)(1-C'(p,n)\frac{\eps}{r})\\
& \geq \frac12 \left(1-2C'(p,n)\frac{|x-y|}{r}\right).
\end{align*}
To estimate $\mathbb{P}(E)$, observe first that
\[
\mathbb{P}(E\cap D)=\frac12\mathbb{P}(E)
\]
and
\[
\mathbb{P}(E^c\cap D)\leq \frac{\eps}{3r}.
\]
Since
\[
\mathbb{P}(D)=\mathbb{P}(E\cap D)+\mathbb{P}(E^c\cap D),
\]
we get
\[
\mathbb{P}(E)\geq 1-2C'(p,n)\frac{|x-y|}{r}-\frac{\eps}{3r}.
\]
Since the probability that the game ends before step $\lceil 2t_0/\eps^2\rceil$ is greater than $\mathbb{P}(E)$, we get an estimate for $\delta$,
\[
\delta\leq C(p,n)\frac{|x-y|}{r}+3C'(p,n)\frac\eps r,
\]
and recalling estimate \eqref{lipeq1}, we have
\begin{align*}
\left|u_\eps(x,t)-u_\eps(y,t)\right| &\leq C(p,n)\frac{\left|x-y\right|}{r}\left\|u_\eps\right\|_\infty+2\delta \left\|u_\eps\right\|_\infty\\
& \leq 2C(p,n)\frac{\left|x-y\right|}{r}\left\|u_\eps\right\|_\infty+6C'(p,n)\frac\eps r \left\|u_\eps\right\|_\infty.\qedhere
\end{align*}
\end{proof}

\begin{theorem}\label{aika-Lip}
Let $x,y\in \Omega$ and $t=:t_0$ satisfy the conditions of Theorem \ref{Liplemma} and $t_1\in (t_0,T)$ satisfy $t_1-t_0\leq r^2$. Then for $(x, t_1),(y,t_0)\in \Omega_T$ we have the Lipschitz estimate
\begin{align*}
\left|u_\eps(x,t_1)-u_\eps(y,t_0)\right|\leq C(p,n)\frac{\left|x-y\right|+\left|t_1-t_0\right|^{\frac12}}{r}\left\|u_\eps\right\|_\infty\\
\phantom{{}=C}+C'(p,n)\frac{\eps^\frac12}{r}\left\|u_\eps\right\|_\infty.\notag
\end{align*}
\end{theorem}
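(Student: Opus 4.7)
The plan is to reduce to the equal-time case via the triangle inequality
\[
|u_\eps(x,t_1)-u_\eps(y,t_0)|\leq |u_\eps(x,t_1)-u_\eps(x,t_0)|+|u_\eps(x,t_0)-u_\eps(y,t_0)|,
\]
so that the second (spatial) term is controlled directly by Theorem \ref{Liplemma} and the real new content becomes the purely temporal increment $|u_\eps(x,t_1)-u_\eps(x,t_0)|$.

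To estimate the temporal piece I would, in each of the two one-sided bounds separately, fix a cancellation-type strategy for one player with target $x$ itself, and apply Lemma \ref{fix.strategia} with the stopping time $\tau^*=\lceil 2(t_1-t_0)/\eps^2\rceil$; the latter brings the game clock into $[t_0-\eps^2/2,t_0]$. Against such a strategy the displacement $x_{\tau^*}-x$ decomposes into two essentially independent contributions: (a) the sum of the $\beta$-random uniform mean-zero vectors in $B_\eps$, and (b) the tug-of-war residual, whose norm is of order $\eps$ times the signed fair-coin imbalance between the players. Lemma \ref{lemma 3.1} (Hoeffding/Kolmogorov) applied to each of these pieces gives a subgaussian tail bound of the form $\mathbb{P}(|x_{\tau^*}-x|\geq \lambda\sqrt{t_1-t_0})\leq C\exp(-c\lambda^2)$, so $\mathbb{E}|x_{\tau^*}-x|\leq C\sqrt{t_1-t_0}$, and in the regime $t_1-t_0\leq r^2$ the token stays inside $B_{r/2}(x)$ with overwhelming probability.

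On this good event one applies Theorem \ref{Liplemma} to the pair $x,x_{\tau^*}$ at the common time $t_1-\tau^*\eps^2/2>6r^2/\alpha$, obtaining a bound of order $|x_{\tau^*}-x|/r+\eps/r$ (in units of $\|u_\eps\|_\infty$) whose expectation contributes the desired $C\sqrt{t_1-t_0}/r\cdot\|u_\eps\|_\infty$ term. The $O(\eps^2)$ misalignment between $t_1-\tau^*\eps^2/2$ and $t_0$ is resolved by one extra DPP step from $(x,t_0)$, and the exponentially small bad event contributes only a lower-order remainder when multiplied by $2\|u_\eps\|_\infty$. Combining this with the spatial bound from the first paragraph yields the claimed estimate with the $\eps^{1/2}/r$ correction.

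The main obstacle is verifying that the cancellation strategy really confines $|x_{\tau^*}-x|$ to the $\sqrt{t_1-t_0}$ scale when the target coincides with the starting point (which makes the prescribed pull direction degenerate and requires a careful choice of tie-breaker), and then combining the probability tails, the $O(\eps^2)$ time-discretization error, and the Lipschitz error from Theorem \ref{Liplemma} in a way that collapses cleanly into the stated $\eps^{1/2}/r$ correction rather than something larger. The analysis is subtler than in the proof of Theorem \ref{Liplemma} because probability distributions at several different time instances must be controlled simultaneously, rather than reducing everything to a single coin-toss sequence.
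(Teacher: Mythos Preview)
Your reduction to $x=y$ matches the paper's, but from there the arguments diverge. The paper does not run a fixed number of steps and estimate the terminal displacement via concentration. Instead, with $s:=\sqrt{t_1-t_0}$, it fixes the simple strategy ``pull towards $y$, stay if already there'' and stops when the token first exits the cylinder $B_s(y)\times(t_0,t_1)$; the key input is a uniform lower bound $P\ge(1/10)^{2(n+1)^2}$ on the probability of exiting through the bottom $\{t=t_0\}$, borrowed from the barrier argument of Lemma~\ref{pos.laajennus}. Combined with Theorem~\ref{Liplemma} on each time slice this yields the recursive inequality
\[
u_\eps(y,t_1)-u_\eps(y,t_0)\le M+(1-P)\Bigl(\sup_{t\in[t_0,t_1]}u_\eps(y,t)-u_\eps(y,t_0)\Bigr),\qquad M=C(p,n)\tfrac{s}{r}\|u_\eps\|_\infty,
\]
which is then iterated geometrically until $(1-P)^k\le Cs/r$. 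Your fixed-step route is a legitimate alternative and has the virtue of avoiding the forward reference to Lemma~\ref{pos.laajennus}. Two remarks on making it go through cleanly: the degeneracy you flag is easiest handled by taking the tie-breaker to be ``stay put'', so that the tug-of-war contribution to $x_{\tau^*}-x$ is exactly the sum of the uncancelled opponent moves, with norm at most $\eps$ times the reflected-walk height; alternatively one can drop cancellation entirely and note that under the plain pull-to-$x$ strategy $|x_k-x|^2-k\eps^2$ is a supermartingale for any opponent, giving $\mathbb E|x_{\tau^*}-x|^2\le \tau^*\eps^2\approx 2(t_1-t_0)$ directly, although one still needs the subgaussian tail from Lemma~\ref{lemma 3.1} to control the event that the token leaves $B_{5r}(x)$ before time $t_0$. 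The paper's version trades this concentration bookkeeping for an appeal to Lemma~\ref{pos.laajennus}, which is needed anyway for the Harnack inequality in Section~\ref{Holder,Harnack}.
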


\begin{proof}
We prove the case $x=y$, for otherwise we use triangle inequality and Theorem \ref{Liplemma}. Because of the error term, we may suppose that $t_1\geq t_0+\eps^2$. Denote 
\[
s:=\sqrt{t_1-t_0}\geq \eps.
\]
Suppose first that $u_\eps(y,t_1)\geq u_\eps(y,t_0)$. The game starts from $(y,t_1)$. Player II uses a strategy $S^0_{\text{II}}$ in which he pulls towards $y$ and stays there if possible. The game ends when the token leaves the cylinder $S:=B_s(y)\times (t_0,t_1)$ for the first time. Let A be the event that the token hits the bottom of $S$. Then, regardless of the strategy of Player I,
\[
P:=\mathbb{P}(A)\geq \left(\frac{1}{10}\right)^{2(n+1)^2}.
\]
This estimate follows from the proof of Lemma \ref{pos.laajennus} below. 

Denote
\[
M:=C(p,n)\frac{s}{r}||u_\eps||_\infty,
\]
where $C(p,n)$ is the constant from Theorem \ref{Liplemma}. By using Theorem \ref{Liplemma} to estimate values of $u_\eps$ in the ball $B_s(y)$, we get
\begin{align*}
u_\eps(y,t_1)-u_\eps(y,t_0)&\leq P(u_\eps(y,t_0)+M)+(1-P)\sup_{\partial B_s(y)\times [t_0,t_1]}u_\eps-u_\eps(y,t_0)\\
& =PM+(1-P)(\sup_{\partial B_r(y)\times [t_0,t_1]}u_\eps-u_\eps(y,t_0))\\
&\leq PM+(1-P)(\sup_{t\in [t_0,t_1]}u_\eps(y,t)+M-u_\eps(y,t_0))\\
& =M+(1-P)(\sup_{t\in [t_0,t_1]}u_\eps(y,t)-u_\eps(y,t_0)). 
\end{align*}
Choose $k\in \N$ such that 
\[
(1-P)^k\leq C(p,n)\frac{s}{r}. 
\]
By continuing the previous estimation we get
\begin{align*}
u_\eps(y,t_1)-u_\eps(y,t_0)&\leq M\sum^{k-1}_{j=0}(1-P)^j+(1-P)^k(\sup_{t\in [t_0,t_1]}u_\eps(y,t)-u_\eps(y,t_0))\\
&\leq \frac{1}{P}M+2C(p,n)\frac{s}{r}||u_\eps||_\infty\\
&=\widetilde{C}(p,n)\frac{s}{r}||u_\eps||_\infty.
\end{align*}
If $u_\eps(y,t_1)< u_\eps(y,t_0)$, we fix a strategy for Player I when starting from $(y,t_1)$ and by symmetric argument we get
\[
u_\eps(y,t_0)-u_\eps(y,t_1)\leq \widetilde{C}(p,n)\frac{s}{r}||u_\eps||_\infty.
\]
Hence, we have 
\begin{align*}
|u_\eps(y,t_1)-u_\epsilon(y,t_1)|&\leq \widetilde{C}(p,n)\frac{s}{r}||u_\eps||_\infty\\
&= \widetilde{C}(p,n)\frac{|t_1-t_0|^{\frac12}}{r}||u_\eps||_\infty.
\end{align*}
The error term of the scale $\eps^{1/2}$ has to be added when $t_1-t_0\leq \eps^2$.
\end{proof}

\section{H\"older and Harnack estimates for $p(x,t)$-game}\label{Holder,Harnack}
In this Section we study regularity properties of the parabolic $p(x,t)$-game, which was defined in section \ref{prel}. We assume throughout the section that $u_\eps>0$ is a value function of the game. In the first subsection we show asymptotic H\"older continuity for $u_\eps$, and then continue with Harnack's inequality in the second subsection.

\subsection{Asymptotic H\"older continuity} Since our location dependent parabolic game is not translation invariant, we cannot immediately use the cancellation strategy. Instead, we use a more general idea developed by Luiro and Parviainen for the elliptic case in \cite{luirop}. The main idea is to start the game simultaneously at two points and try to pull them closer to each other by using a suitable comparison function $f$ with a certain favorable curvature in space. The player trying to pull the two points closer, say Player I, has a certain flexibility in her strategy depending on what the opponent does. If Player II does not pull the points further away from each other, then Player I tries to pull them directly closer. Instead, if Player II tries to pull the points almost optimally further away, then Player I aims at the exactly opposite step. 

As in the previous section concerning Lipschitz regularity, we break the proof of parabolic H\"older continuity into two parts. In the first part, Theorem \ref{Holder1}, we consider the case where the points $x,y\in \Omega$ are at the same time level $t$ in $\Omega_T$. We use the strategy of \cite{luirop}, but add a time-dependent term $g(t)=|t|^{\delta/2}$ to the comparison function $f$. The purpose of the term $g$ in our comparison function $F(x,t)=f(x)+g(t)$ is to get the right boundary values for $F$ without allowing too large error in estimates.

In the other part of the proof of H\"older continuity we handle the time direction. This part is easier, and we could actually prove it by utilizing the technique we used in the proof of Theorem \ref{aika-Lip}. However, we present another proof relying more on the DPP property of the value function $u_\eps$.

\begin{theorem}\label{Holder1}
Let $B_{2r}(0)\times [-2r^2,-\frac12 r^2]\subset \Omega\times (-T,T)$. Then the value function $u_\eps$ satisfies the following H\"older estimate for some $\delta=\delta(n)\in (0,1)$,
\[
|u_\eps(x,t)-u_\eps(y,t)|\leq C(n)\frac{|x-y|^\delta}{r^\delta}\left\|u_\eps\right\|_\infty+C'(n)\frac{\eps^\delta}{r^\delta}\left\|u_\eps\right\|_\infty, 
\]
when $x,y\in B_r(0)$ and $t\in (-r^2,-\frac12 r^2)$.
\end{theorem}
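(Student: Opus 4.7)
The plan is to adapt the coupled two-point strategy of \cite{luirop} to the parabolic setting. Without loss of generality assume $u_\eps(x,t)\geq u_\eps(y,t)$, and start the $p(x,t)$-game simultaneously at $(x,t)$ and $(y,t)$ so that the joint state $(X_k,Y_k,T_k)\in\R^{2n+1}$ evolves under coupled strategies specified below. Applying Lemma \ref{fix.strategia} to each marginal game, with Player I's strategy in the $y$-game and Player II's in the $x$-game both fixed to a common pulling-together protocol, yields
\[
u_\eps(x,t)-u_\eps(y,t)\leq \E\bigl[u_\eps(X_{\tau^*},T_{\tau^*})-u_\eps(Y_{\tau^*},T_{\tau^*})\bigr],
\]
where $\tau^*$ is the first exit time from the parabolic cylinder $Q:=B_r(0)\times(-r^2,t]$.

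The comparison function I would use is
\[
F(\xi,\eta,s):=A\bigl(\tilde f(|\xi-\eta|)+C_0(t-s)^{\delta/2}\bigr),
\]
where $\tilde f$ is a smooth truncation of $\sigma\mapsto \sigma^\delta$ that equals $\sigma^\delta$ on $[0,r]$ and is capped past $2r$, and where $\delta\in(0,1)$, $C_0$, and $A\sim \|u_\eps\|_\infty/r^\delta$ are to be chosen at the end. The purpose of the time summand $(t-s)^{\delta/2}$, which is the new parabolic ingredient compared to \cite{luirop}, is to guarantee $F\geq A r^\delta$ on the past face $\{s=-r^2\}\cap Q$, so that $F$ dominates $u_\eps(X_{\tau^*},T_{\tau^*})-u_\eps(Y_{\tau^*},T_{\tau^*})$ on the whole parabolic boundary of $Q$.

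The heart of the argument is the supermartingale inequality
\[
\E\bigl[F(X_{k+1},Y_{k+1},T_{k+1})\mid \F_k\bigr]\leq F(X_k,Y_k,T_k)+O(\eps^{2+\delta}).
\]
I would prove it using the flexibility idea described in the introduction to this section: when the opponent in the tug-of-war does not pull the two tokens near-optimally apart, our player pulls them directly together; when the opponent is near-optimal, our player plays the exactly opposite step so that the two extremal moves cancel. The noise step is coupled by using the same uniform draw in both marginal games, so $|X_k-Y_k|$ is unchanged by noise. A two-term Taylor expansion of $\tilde f$ then produces a negative contribution of order $\eps^2\bigl(\alpha\,\tilde f''(|X_k-Y_k|)+\beta\,\Delta\tilde f(|X_k-Y_k|)/(n+2)\bigr)$, which is strictly negative by concavity of $\sigma^\delta$ and gives the required game-theoretic $p(x,t)$-infinity-Laplacian-type gain, while the time summand adds $+\tfrac12 AC_0 \eps^2(t-T_k)^{\delta/2-1}$. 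Choosing $\delta$ sufficiently small (depending only on $n$) and then $C_0$ sufficiently large makes the net expected change non-positive up to a Taylor-remainder term of order $\eps^{2+\delta}/|X_k-Y_k|^{2-\delta}$, which is controllable as long as $|X_k-Y_k|\gtrsim \eps$.

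Optional stopping at $\tau^*$ then gives $\E[F(X_{\tau^*},Y_{\tau^*},T_{\tau^*})]\leq F(x,y,t)+O(\eps^{2+\delta}\cdot \E[\tau^*])$, and since $\tau^*\leq 2r^2/\eps^2$ the accumulated error is $O(A\eps^\delta)$, which after the normalization $A\sim \|u_\eps\|_\infty/r^\delta$ produces exactly the $\eps^\delta/r^\delta$ term in the statement. Combined with the boundary domination $F\geq Ar^\delta\geq 2\|u_\eps\|_\infty$, this yields the desired H\"older estimate. The hardest step will be the supermartingale calculation: because $\alpha$ and $\beta$ depend on $(x,t)$, translation invariance is lost and the noise coupling must be synchronized carefully across the two marginal games; moreover the spatial concavity gain must be balanced against the new time-derivative loss uniformly in $|X_k-Y_k|$, including in the borderline regime $|X_k-Y_k|\sim \eps$ where the discrete infinity-Laplacian heuristic degenerates and produces the $\eps^\delta/r^\delta$ error rather than a genuine gain.
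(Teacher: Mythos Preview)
Your overall strategy --- couple two marginal games, build a comparison function with a time correction $(t-s)^{\delta/2}$, and apply optional stopping --- is in the right spirit, but as written it contains genuine gaps that the paper's argument is specifically designed to avoid.

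First, the per-step supermartingale inequality with error $O(\eps^{2+\delta})$ cannot hold: the time summand alone increments by $C_0\bigl((t-T_{k+1})^{\delta/2}-(t-T_k)^{\delta/2}\bigr)$, which at $k=0$ equals $C_0(\eps^2/2)^{\delta/2}\sim C_0\eps^{\delta}\gg \eps^{2+\delta}$. Moreover, you propose taking $C_0$ large to dominate on the bottom face, but enlarging $C_0$ enlarges this loss term, so the direction of the parameter choice is backwards; the spatial concavity gain has to beat the time loss, which pushes $C_0$ \emph{down}, not up. The paper sidesteps this tension entirely by \emph{not} iterating a martingale: it runs a single-step DPP contradiction (a discrete maximum principle), assuming $M:=\sup\bigl(u_\eps(x',t')-u_\eps(z',t')-F(x',z',t')\bigr)>C^{2N}\eps^{\delta}$, applying the DPP once at a near-maximizer, and deriving a contradiction. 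The time term then appears exactly once through $|t-\eps^2/2|^{\delta/2}-|t|^{\delta/2}\leq \eps^{\delta}$, and is absorbed by the spatial gain at that single step.

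Second, the coupling you describe is internally inconsistent and, more importantly, cannot be realized. You say the noise is coupled by the same uniform draw so that $|X_k-Y_k|$ is unchanged, yet then claim a noise contribution $\beta\,\Delta\tilde f/(n+2)$; if the distance is preserved, that term is identically zero. And since $\alpha(X_k,T_k)\neq\alpha(Y_k,T_k)$ in the $p(x,t)$-game, there is no common probability space on which both marginals play tug-of-war or both play noise at the same round. This is exactly why the paper splits $u_\eps(x,t)-u_\eps(z,t)$ into $I_1$ (common tug-of-war weight $\alpha(z,t)$), $I_2$ (common noise weight $\beta(x,t)$), and a cross term $I_3$ carrying the factor $\alpha(x,t)-\alpha(z,t)$, and estimates each against $F$ separately; your martingale framing has no mechanism to generate or control $I_3$. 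Finally, your bare comparison function $A\tilde f(|\xi-\eta|)$ lacks the annular step correction $f_2$ that the paper uses on $\{|x-z|\leq N\eps/10\}$; this piece is what produces the strict inequality $f(x,z)>\tfrac12(\sup f+\inf f)+\eps^{\delta}$ in the borderline regime $|x-z|\sim\eps$, which you flag as hardest but do not actually address.
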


\begin{proof}
Denote
\[
S_1:=B_r(0)\times (-r^2,-\frac12 r^2),\ S_2:=B_{2r}(0)\times (-2r^2,-\frac12 r^2).
\]
To define a suitable comparison function, define the functions $g$, $f_1$ and $f_2$,
\[
g(t)=|t|^{\delta/2},
\]
\[
f_1(x,z)=C(n)|x-z|^\delta +|x+z|^2,
\]
as well as
\[ f_2(x,z) = \left\{ 
  \begin{array}{l l}
    C^{2(N-i)}\eps^\delta & \quad \text{if}\ (x,z)\in A_i,\\
    0 & \quad \text{if}\ |x-z|>N\frac{\eps}{10}.
  \end{array} \right.\]
Here
\[
A_i:=\{(x,z)\in \R^{2n}\ :\ (i-1)\frac{\eps}{10}<|x-z|\leq i\frac{\eps}{10}\}
\]
for $i=\{1,...,N\}$. Finally, our comparison function is
\[
F(x,z,t)=f(x,z)+g(t),
\]
where
\[
f(x,z)=f_1(x,z)-f_2(x,z).
\]
We use this notation to emphasize the time dependent term $g$ needed in the parabolic case. 

By scaling, we may assume that
\[
0\leq u_\eps\leq r^\delta\ \text{in}\ S_2\setminus S_1.
\]
This implies
\[
u_\eps(x,t)-u_\eps(z,t)-F(x,z,t)\leq C^{2N}\eps^\delta\quad \text{in}\ S_2\setminus S_1,
\]
and we want to show that the same inequality holds in $S_1$. Suppose not. Then 
\[
M:=\sup_{(x',t'),(z',t')\in S_1}(u_\eps(x',t')-u_\eps(z',t')-F(x',z',t'))>C^{2N}\eps^\delta.
\]
Thriving for contradiction, let $\eta>0$ and choose $(x,t), (z,t)\in S_1$ such that
\begin{equation}\label{M arvio}
u_\eps(x,t)-u_\eps(z,t)-F(x,z,t)\geq M-\eta.
\end{equation}
Recall that DPP for $u_\eps$ reads as
\begin{align*}
u_\eps(x,t)&=\frac{\alpha(x,t)}{2}\left\{\sup_{y\in B_\eps(x)}u_\eps(y,t-\frac{\eps^2}{2})+\inf_{y\in B_\eps(x)}u_\eps(y,t-\frac{\eps^2}{2}) \right\}\\
&\phantom{{}=\frac{\alpha(x,t)}{2}}+\beta(x,t)\vint_{B_\eps(x)}u_\eps(y,t-\frac{\eps^2}{2})dy.
\end{align*}
By using the DPP characterization for the difference $u_\eps(x,t)-u_\eps(z,t)$, we see that
\[
u_\eps(x,t)-u_\eps(z,t)=I_1+I_2+I_3,
\]
where
\begin{align*}
I_1&=\frac{\alpha(z,t)}{2}\bigg(\sup_{B_\eps(x)}u_\eps(y,t-\frac{\eps^2}{2})-\inf_{B_\eps(z)}u_\eps(y,t-\frac{\eps^2}{2})\\
&\phantom{{}=\frac{\alpha(z,t)}{2}}+\inf_{B_\eps(x)}u_\eps(y,t-\frac{\eps^2}{2})-\sup_{B_\eps(z)}u_\eps(y,t-\frac{\eps^2}{2})\bigg),
\end{align*}
\[
I_2=\beta(x,t)\left(\vint_{B_\eps(x)}u(y,t-\frac{\eps^2}{2})dy-\vint_{B_\eps(z)}u(y,t-\frac{\eps^2}{2})dy\right),
\]
and
\begin{align*}
I_3=&\frac{\alpha(x,t)-\alpha(z,t)}{2}\\
& \left(\sup_{B_\eps(x)}u_\eps(y,t-\frac{\eps^2}{2})+\inf_{B_\eps(x)}u_\eps(y,t-\frac{\eps^2}{2})-2\vint_{B_\eps(z)}u(y,t-\frac{\eps^2}{2})dy\right).
\end{align*}
This identity together with inequality \eqref{M arvio} gives
\begin{equation}\label{contradiction}
M\leq I_1+I_2+I_3-F(x,z,t)+\eta.
\end{equation}

We are going to estimate the terms $I_1$, $I_2$ and $I_3$ to get a contradiction with \eqref{contradiction}. To be more precise, we are going to show the following inequalities,
\[
\alpha(z,t)M>I_1-\alpha(z,t)(F(x,z,t)-\eta),
\]
\[
\beta(x,t)M> I_2-\beta(x,t)((F(x,z,t)-\eta),
\]
as well as
\[
(\alpha(x,t)-\alpha(z,t))M>I_3-(\alpha(x,t)-\alpha(z,t))(F(x,z,t)-\eta).
\]
To estimate $I_1$, first we prove the following inequalities
\[
\sup_{B_\eps(x)}u_\eps(y,t-\frac{\eps^2}{2})-\inf_{B_\eps(z)}u_\eps(y,t-\frac{\eps^2}{2})\leq M+\sup_{B_\eps(x)\times B_\eps(z)}F(\ol x,\ol z,t-\frac{\eps^2}{2})+\eta
\]
and
\[
\inf_{B_\eps(x)}u_\eps(y,t-\frac{\eps^2}{2})-\sup_{B_\eps(z)}u_\eps(y,t-\frac{\eps^2}{2})\leq M+\inf_{B_\eps(x)\times B_\eps(z)}F(\ol x,\ol z,t-\frac{\eps^2}{2})+\eta.
\]
The first inequality follows by picking $x'\in B_\eps(x)$, $z'\in B_\eps(z)$ such that $u_\eps(x')\geq \sup_{B_\eps(x)}u_\eps-\eta/2$ and $u_\eps(z')\leq \inf_{B_\eps(z)}u_\eps-\eta/2$ and estimating
\begin{align*}
\sup_{y\in B_\eps(x)}& u_\eps(y,t-\frac{\eps^2}{2})-\inf_{B_\eps(z)}u_\eps(y,t-\frac{\eps^2}{2})\\
&\leq u_\eps(x',t-\frac{\eps^2}{2})-u_\eps(z',t-\frac{\eps^2}{2})+\eta\\
&\leq M+F(x',z',t-\frac{\eps^2}{2})+\eta\\
&\leq M+\sup_{(y,y')\in B_\eps(x)\times B_\eps(z)}F(y,y',t-\frac{\eps^2}{2})+\eta.
\end{align*}
The second inequality follows the same way, and we get an estimate for $I_1$,
\begin{align*}
&I_1-\frac{\alpha(z,t)}{2}\eta\\
&\leq \alpha(z,t)\left(M+\frac12 \left(\sup_{B_\eps(x)\times B_\eps(z)}F(x,z,t-\frac{\eps^2}{2})+\inf_{B_\eps(x)\times B_\eps(z)}F(x,z,t-\frac{\eps^2}{2})\right)\right).
\end{align*}
Let us show that
\begin{align*}
F(x,z,t)&>\frac12 \left(\sup_{B_\eps(x)\times B_\eps(z)}F(x',z',t-\frac{\eps^2}{2})+\inf_{B_\eps(x)\times B_\eps(z)}F(x',z',t-\frac{\eps^2}{2})\right)+2\eta\\
& =\frac12 \left(\sup_{B_\eps(x)\times B_\eps(z)}f(x',z')+\inf_{B_\eps(x)\times B_\eps(z)}f(x',z')\right)+\left|t-\frac{\eps^2}{2}\right|^{\delta/2}+\eta.
\end{align*}
Since $t<-\frac{r^2}{2}$ and $r<1$, we get an estimate
\begin{align}\label{aika-arvio}
\left|t-\frac{\eps^2}{2}\right|^{\delta/2}-|t|^{\delta/2}&\leq \left(\frac{r^2}{2}+\frac{\eps^2}{2}\right)-\left(\frac{r^2}{2}\right)^{\delta/2}\nonumber\\
& \leq \left(\frac{r^2}{2}\right)^{\delta/2}\left(1+\frac{\eps^2}{r^2}\right)^{\delta/2}-\left(\frac{r^2}{2}\right)^{\delta/2}\nonumber\\
& \leq \left(\frac{r^2}{2}\right)^{\delta/2}\left(1+\frac{\eps^2}{r^2}\right)-\left(\frac{r^2}{2}\right)^{\delta/2}\nonumber\\
& =\frac{\eps^2}{r^2}\leq r^{\delta-2}\eps^2.
\end{align}
Hence, it suffices to show that
\[
f(x,z)>\frac12 \left(\sup_{B_\eps(x)\times B_\eps(z)}f+\inf_{B_\eps(x)\times B_\eps(z)}f\right)+r^{\delta-2}\eps^2.
\]

Throughout the proof the error caused by the term $g$ is in the acceptable scale $r^{\delta-2}\eps^2$.
  
During the rest of the argument we just write $\sup f$ and $\inf f$ meaning that $\sup$ and $\inf$ are taken over $B_\eps(x)\times B_\eps(z)$. 

Suppose first that $|x-z|>N\frac{\eps}{10}$. Then $f_2=0$.  Choose $h_x,h_z\in B_\eps(0)$ such that
\[
\sup f_1\leq f_1(x+h_x,z+h_z)+\eta.
\]
Let $\theta=\frac{1}{10}$ and assume first that
\[
(h_x-h_z)^2_V\geq (4-\theta)\eps^2,
\]
where $V$ is the space spanned by $x-z$ and
\[
(h_x-h_z)_V=(h_x-h_z)\cdot \frac{x-z}{|x-z|}.
\]
To estimate $\sup f_1+\inf f_2-2f_1$, it is useful to write Taylor's expansion for $f_1(x+h_x,z+h_z)$ as
\begin{align*}
f_1&(x+h_x,z+h_z)\\
& =f_1(x,z)+C\delta |x-z|^{\delta-1}(h_x-h_z)_V+2(x+z)\cdot(h_x+h_z)\\
& +\frac C2\delta|x-z|^{\delta-2}\left((\delta-1)(h_x-h_z)^2_V+(h_x-h_z)^2_{V^\bot}\right)\\
& +|h_x+h_z|^2+\mathcal{E}_{x,z}(h_x,h_z).
\end{align*}
Here $\mathcal{E}_{x,z}$ is an error term satisfying
\begin{align*}
\mathcal{E}_{x,z}(h_x,h_z)&\leq C|(h_x,h_z)|^3(|x-z|-2\eps)^{\delta-3}\\
&\leq 10\eps^2|x-z|^{\delta-2}
\end{align*}
when $N$ is large enough, for example $N>100C/\delta$.

By using the Taylor estimate and the estimate for the error term, we obtain 
\begin{align*}
&\sup f_1+\inf f_2-2f_1\\
&\leq f_1(x+h_x,z+h_z)+f_1(x-h_x,z-h_z)-2f_1(x,z)+\eta\\
&= \frac C2 \delta|x-z|^{\delta-2}\left(2(\delta-1)(h_x-h_z)^2_V+2(h_x-h_z)^2_{V^\bot}\right)\\
& +2|h_x+h_z|^2+\mathcal{E}_{x,z}(h_x,h_z)+\mathcal{E}_{x,z}(-h_x,-h_z)+\eta\\
&\leq |x-z|^{\delta-2}(20-C\delta)\eps^2+8\eps^2+\eta\\
& \leq -\widetilde{C}r^{\delta-2}\eps^2+8\eps^2+\eta<-2r^{\delta-2}\eps^2,
\end{align*}
when $\widetilde{C}=C\delta-20$ has been chosen large.

If
\[
(h_x-h_z)^2_V< (4-\theta)\eps^2,
\]
then
\[
(h_x-h_z)_V\leq (2-\theta/4)\eps,
\]
and the second order term of the Taylor estimate, together with the error term, can be estimated from above by
\begin{align*}
\frac C2& \delta|x-z|^{\delta-2}(2\eps)^2+(2\eps)^2+10\eps^2|x-z|^{\delta-2}\\
& \leq 10C\delta|x-z|^{\delta-2}\eps^2.
\end{align*}
Now we get
\begin{align*}
&\sup f_1+\inf f_1-2f_1\\
&\leq f_1(x+h_x,z+h_z)+f_1(x-\eps\frac{x-z}{|x-z|},x+\eps\frac{x-z}{|x-z|})-2f_1(x,z)+\eta\\
&\leq C\delta|x-z|^{\delta-1}(-\theta\eps/4)+16\eps+10C\delta|x-z|^{\delta-2}\eps^2+\eta\\
&\leq 10\frac{C}{\delta}C\delta|x-z|^{\delta-2}(-\theta/5)\eps^2+10C\delta|x-z|^{\delta-2}\eps^2+\eta\\
& \leq -\frac{1}{10}C^2r^{\delta-2}\eps^2\\
& <-2r^{\delta-2}\eps^2,
\end{align*}
when $C$ is large enough.

Suppose next that $|x-z|\leq N\frac{\eps}{10}$. Then a straightforward estimate gives 
\[
|f_1(x+h_x,z+h_z)-f_1(x,z)|\leq 3C\eps^\delta.
\]
We also have
\[
\inf (f_1-f_2)\leq \sup f_1-10C\eps^\delta-2f_2,
\]
which implies
\begin{align*}
\sup f +\inf f &\leq 2\sup f_1-10C\eps^\delta-2f_2(x,z)\\
&\leq 2f_1+6C\eps^\delta-10C\eps^\delta-2f_2+\eps^\delta\\
& \leq 2f-2\eps^\delta
\end{align*}
when $C$ is large enough. Since $-\eps^\delta\leq -|x-z|^{\delta-2}\eps^2\leq -r^{\delta-2}\eps^2$, It follows that
\[
f(x,z)>\frac12 \left(\sup_{B_\eps(x)\times B_\eps(z)}f+\inf_{B_\eps(x)\times B_\eps(z)}f\right)+r^{\delta-2}\eps^2.
\]
Hence, we have shown that 
\[
I_1-\frac{\alpha(z,t)}{2}\delta<\alpha(z,t)(M+F(x,z,t)),
\]
or equivalently,
\[
\alpha(z,t)M>I_1-\alpha(z,t)F(x,z,t)-\frac{\alpha(z,t)}{2}\delta.
\]

Let us next estimate $I_2$. We want to show that
\[
\beta(x,t)M> I_2-\beta(x,t)((F(x,z,t)+\eta),
\]
where
\[
I_2=\beta(x,t)\left(\vint_{B_\eps(x)}u(y,t-\frac{\eps^2}{2})dy-\vint_{B_\eps(z)}u(y,t-\frac{\eps^2}{2})dy\right).
\]
Let $P_{x,z}(h)$ be a mirror point of $h$ with respect to $\text{span}(x-z)^\perp$. If $|x-z|\geq 2\eps$, we get an estimate
\begin{align*}
I_2&=\frac{\beta(x,t)}{|B_\eps|}\bigg(\int_{B_\eps(0)}u_\eps(x+h)-u_\eps(z+P_{x,z}(h))-F(x+h,z+P_{x,z}(h))dy\\
&\phantom{{}=\int_{B_\eps(0)}u_\eps(x+h)}+\int_{B_\eps(0)}F(x+h,z+P_{x,z}(h))dy\bigg)\notag\\
&\leq \beta(x,t)M+\frac{\beta(x,t)}{|B_\eps|}\int_{B_\eps(0)}F(x+h,z+P_{x,z}(h))dy.
\end{align*}
If $|x-z|\leq 2\eps$, there is a perfect cancellation in the intersection $B_\eps(x)\cap B_\eps(z)$. We refer to \cite{luirop} for details and just state that in this case we have an estimate
\[
I_2\leq \beta(x,t)M+\beta(x,t)J_1,
\]
where
\[
J_1=\frac{1}{|B_\eps|}\left(\int_{B_\eps(0)\setminus B_\eps(x-z)}F(x+h,z+P_{x,z}(h))dh+\int_{B_\eps(x)\cap B_\eps(z)}F(y,y)dy\right).
\]
We want to show that
\[
F(x,z,t)>J_1.
\]
Notice that since
\begin{align*}
&\int_{B_\eps(0)\setminus B_\eps(x-z)}F(x+h,z+P_{x,z}(h))dh+\int_{B_\eps(x)\cap B_\eps(z)}F(y,y)dy\\
&\leq \int_{B_\eps(0)\setminus B_\eps(x-z)}f(x+h,z+P_{x,z}(h))dh+\int_{B_\eps(x)\cap B_\eps(z)}f(y,y)dy\\
&\phantom{{}=\int_{B_\eps(0)\setminus B_\eps(x-z)}}+\frac{2}{|B_\eps|}\int_{B_\eps}\left|t-\frac{\eps^2}{2}\right|^{\delta/2}dy,
\end{align*}
it is sufficient to show that 
\begin{align*}
&f(x,z,t)-2r^{\delta-2}\eps^2\\
&>\frac{1}{|B_\eps|}\left(\int_{B_\eps(0)\setminus B_\eps(x-z)}f(x+h,z+P_{x,z}(h))dh+\int_{B_\eps(x)\cap B_\eps(z)}f(y,y)dy\right).
\end{align*}
If $|x-z|>N\frac{\eps}{10}$, the key estimate is
\[
\eps^2|x-z|^{\delta-2}(10-\frac{C\delta}{4(n+2)})+2r^{\delta-2}\eps^2<0,
\]
which holds when $C$ is sufficiently large. In the same manner, if $|x-z|\leq N\frac{\eps}{10}$, the additional error term $2r^{\delta-2}\eps^2$ does not cause extra difficulty compared to the elliptic case. These estimates can be obtained by using similar Taylor expansion ideas than in the case $I_1$, see \cite{luirop}.

In the last case we need to show that
\[
(\alpha(x,t)-\alpha(z,t))M>I_3-(\alpha(x,t)-\alpha(z,t))(F(x,z,t)+\eta),
\]
where
\begin{align*}
I_3=&\frac{\alpha(x,t)-\alpha(z,t)}{2}\\
& \left(\sup_{B_\eps(x)}u_\eps(y,t-\frac{\eps^2}{2})+\inf_{B_\eps(x)}u_\eps(y,t-\frac{\eps^2}{2})-2\vint_{B_\eps(z)}u_\eps(y,t-\frac{\eps^2}{2})dy\right).
\end{align*} 
Again the extra error term compared to the elliptic case is on the scale of $r^{\delta-2}\eps^2$. By choosing a sequence $(x_j)$ such that $u_\eps(x_j)\rightarrow \sup_{B_\eps(x)}u_\eps$, we have 
\begin{align*}
\sup_{B_\eps(x)}&u_\eps-\vint_{B_\eps(z)}u_\eps(y,t-\frac{\eps^2}{2})dy\\
&=\vint_{B_\eps(z)}\lim_j(u_\eps(x_j)-u_\eps(y)-F(x_j,y,t-\frac{\eps^2}{2})+F(x_j,y,t-\frac{\eps^2}{2}))dy\\
&\leq M+\sup_{a\in B_\eps(x)}\vint_{B_\eps(z)}F(a,y,t-\frac{\eps^2}{2})dy.
\end{align*}
We also get
\[
\inf_{B_\eps(x)}u_\eps-\vint_{B_\eps(z)}u_\eps(y,t-\frac{\eps^2}{2})dy\leq M+\vint_{B_\eps(z)}\inf_{b\in B_\eps(x)}F(b,y,t-\frac{\eps^2}{2})dy,
\]
and finally
\begin{align*}
I_3\leq &\left(\frac{\alpha(x,t)-\alpha(z,t)}{2}\right)\\
& \left(2M+\sup_{a\in B_\eps(x)}\vint_{B_\eps(z)}F(a,y,t-\frac{\eps^2}{2})+\inf_{b\in B_\eps(x)}F(b,y,t-\frac{\eps^2}{2})dy\right).
\end{align*}
Hence, it is sufficient to show that
\[
f(x,z)>\frac12 \sup_{a\in B_\eps (x)}\left[\vint_{B_\eps(z)}f(a,y)+\inf_{b\in B_\eps (x)}f(b,y)dy\right]+2r^{\delta-2}\eps^2.
\]
The arguments are analogous to those used before, and we refer to \cite{luirop} for details.
\end{proof}

Next we consider the time direction. For the similar oscillation estimate in the PDE context, we refer to \cite[Lemma 4.3]{jins15} and \cite{barlesbl02}.

\begin{theorem}\label{time-Holder}
Let $B_{2r}(0)\times [-2r^2,0]\subset \Omega\times (-T,T)$ and $-r^2<t_0<t_1<0$. Then $u_\eps$ satisfies
\[
|u_\eps (x,t_1)-u_\epsilon (x,t_0)|\leq C(n)\frac{|t_1-t_0|^{\delta/2}}{r^\delta}+C'(n)\frac{\eps^\delta}{r^\delta}, 
\]
when $x\in B_r(0)$.
\end{theorem}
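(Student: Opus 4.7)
By symmetry of the DPP under swapping the two players, it suffices to prove the one-sided bound
\[
u_\eps(x,t_1) - u_\eps(x,t_0) \leq C(n)\frac{(t_1-t_0)^{\delta/2}}{r^\delta}\|u_\eps\|_\infty + C'(n)\frac{\eps^\delta}{r^\delta}\|u_\eps\|_\infty.
\]
Working on the parabolic cylinder $Q:=B_r(x)\times(t_0,t_1]$, my plan is to construct a comparison function $\phi(y,\tau)$ with three properties: $\phi$ dominates $u_\eps$ on the parabolic boundary of $Q$, $\phi$ is a DPP-supersolution in $Q$, and $\phi(x,t_1)$ equals $u_\eps(x,t_0)$ plus the desired right-hand side. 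The DPP comparison principle --- obtained by iterating the DPP identity applied to $u_\eps - \phi$ until the associated trajectory exits $Q$ through its parabolic boundary, where the difference is non-positive --- then forces $u_\eps \leq \phi$ throughout $Q$, and evaluating at $(x,t_1)$ completes the proof.

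For $\phi$ I would take
\[
\phi(y,\tau) := u_\eps(x,t_0) + \frac{A\|u_\eps\|_\infty}{r^\delta}\bigl(|y-x|^\delta + (\tau-t_0)^{\delta/2}\bigr) - \psi(y-x) + \frac{A'\eps^\delta\|u_\eps\|_\infty}{r^\delta},
\]
with $A = A(n)$, $A' = A'(n)$ large constants to be tuned and $\psi$ a staircase correction in the spirit of the function $f_2$ from the proof of Theorem \ref{Holder1}, supported in $\{|y-x|\leq N\eps/10\}$ to regularize the behavior of $|y-x|^\delta$ near its singularity at the $\eps$-scale. The boundary checks are then short: on $\{\tau=t_0\}$, Theorem \ref{Holder1} gives $u_\eps(y,t_0)-u_\eps(x,t_0) \leq C(n)|y-x|^\delta\|u_\eps\|_\infty/r^\delta + C'(n)\eps^\delta\|u_\eps\|_\infty/r^\delta$, which is dominated by $\phi(y,t_0)$ once $A, A'$ are chosen large enough; on the lateral part $\{|y-x|=r\}\times[t_0,t_1]$, the bound $\phi \geq u_\eps(x,t_0)+A\|u_\eps\|_\infty$ dominates $u_\eps(y,\tau)$ whenever $A\geq 2$.

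The substantive step is to verify the DPP-supersolution inequality
\[
\phi(y,\tau) \geq \tfrac{\alpha(y,\tau)}{2}\Bigl(\sup_{B_\eps(y)}\phi(\cdot,\tau-\tfrac{\eps^2}{2}) + \inf_{B_\eps(y)}\phi(\cdot,\tau-\tfrac{\eps^2}{2})\Bigr) + \beta(y,\tau)\vint_{B_\eps(y)}\phi(\cdot,\tau-\tfrac{\eps^2}{2})\,dy
\]
at every $(y,\tau)\in Q$ (with the convention $(\tau-\eps^2/2-t_0)^{\delta/2}=0$ when the argument is negative). I would split the increment $\phi(y,\tau)-\mathrm{DPP}(\phi)(y,\tau)$ into a temporal part coming from $(\tau-t_0)^{\delta/2}$ and a spatial part coming from $|y-x|^\delta - \psi(y-x)$. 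The temporal part is non-negative by concavity and monotonicity of $s\mapsto s^{\delta/2}$, producing a positive increment of order $A\|u_\eps\|_\infty r^{-\delta}\bigl((\tau-t_0)^{\delta/2}-(\tau-\eps^2/2-t_0)^{\delta/2}\bigr)$; the spatial part is analyzed by the same second-order Taylor expansion used for $f=f_1-f_2$ in Theorem \ref{Holder1}, yielding, away from the singularity, a strictly negative discrete normalized $p$-Laplacian of size $-c\eps^2|y-x|^{\delta-2}$ (which favors the supersolution inequality), while the staircase $\psi$ handles the near-singularity regime. The hard part will be the balance in the transition scale $|y-x|\sim\eps$ and $\tau-t_0\sim\eps^2$, where neither Taylor expansion is sharp and the errors on both sides are of order $\eps^\delta$; as in Theorem \ref{Holder1}, this forces $\delta=\delta(n)$ to be taken sufficiently small and the constants $A, A'$ to be tuned to absorb the lower-order discrete errors.
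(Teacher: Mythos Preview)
Your barrier does not satisfy the DPP-supersolution inequality. The trouble is the spatial term $|y-x|^\delta$: for $\rho:=|y-x|\gg\eps$ a direct Taylor expansion gives
\[
\frac{\alpha}{2}\bigl[(\rho+\eps)^\delta+(\rho-\eps)^\delta\bigr]+\beta\vint_{B_\eps(y)}|z-x|^\delta\,dz
\;=\;\rho^\delta+\frac{\delta\,\eps^2}{2(p+n)}\bigl[(p-1)\delta+n-p\bigr]\rho^{\delta-2}+o(\eps^2),
\]
and the bracket $(p-1)\delta+n-p$ is \emph{positive} whenever $p(y,\tau)\le n$, which the standing hypothesis $p>2$ certainly permits in dimension $n\ge 3$. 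At such points $|y-x|^\delta$ is a DPP-\emph{sub}solution, and the temporal gain $(\tau-t_0)^{\delta/2}-(\tau-\eps^2/2-t_0)^{\delta/2}\sim\eps^2(\tau-t_0)^{\delta/2-1}$ cannot rescue it: take $\tau-t_0$ of order $r^2$ and $\rho$ just outside the staircase region, $\rho\sim N\eps$; then the gain is of order $\eps^2r^{\delta-2}$ while the spatial loss is of order $\eps^\delta$, and $\eps^\delta/(\eps^2r^{\delta-2})=(r/\eps)^{2-\delta}\to\infty$. Your appeal to Theorem~\ref{Holder1} is a false analogy: there the noise term $I_2$ is controlled via the mirror coupling $h\mapsto P_{x,z}(h)$ in $\R^{2n}$, which collapses the averaging of $|x-z|^\delta$ to the one-dimensional radial direction where concavity gives the correct sign. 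No such device is available for a single barrier in $\R^n$; the isotropic average of $|\cdot|^\delta$ has the wrong sign once $p\le n$.

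The paper avoids this by proving an intermediate oscillation estimate with the quadratic--linear barrier $\ol v(y,\tau)=\ol c+7r^{-2}A\tau+2r^{-2}A|y|^2$, where $A=\osc_{B_r\times\{-r^2\}}u_\eps$. The linear-in-time drop $\tfrac{7}{2}r^{-2}A\eps^2$ per DPP step dominates the spatial growth $2r^{-2}A\eps^2$ uniformly in $(y,\tau)$, so $\ol v$ is a strict DPP-supersolution regardless of the values of $p(y,\tau)$; a short contradiction argument then yields $\osc_{Q_r}u_\eps\le C\,\osc_{B_r\times\{-r^2\}}u_\eps$. Rescaling to radius $\rho=|t_1-t_0|^{1/2}$ and invoking Theorem~\ref{Holder1} on the bottom slice gives the stated bound. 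If you want to repair your comparison scheme, replace $|y-x|^\delta+(\tau-t_0)^{\delta/2}$ by $|y-x|^2+K(\tau-t_0)$ with $K$ large---but that is precisely the paper's argument.
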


\begin{proof}
We define
\[
Q_r:=B_r(0)\times (-r^2,0).
\]
We want to show that the oscillation of $u$ in $Q_r$ is comparable with the oscillation of $u$ on the bottom of $Q_r$ by a constant depending only on the dimension $n$. The idea is to control the oscillation of $u_\eps$ by suitable comparison functions $\ol v$ and $\underline v$. We use the DPP together with suitable iteration to get estimates for $u_\eps$ and the comparison functions.

Denote
\[
A:=\osc_{B_r(0)\times \{-r^2\}}u_\eps
\]
and set the first comparison function $\ol v$ as
\[
\ol v(x,t)=\ol c+7r^{-2}At+2r^{-2}A|x|^2,
\]
where  $\ol c$ is chosen so that $\ol v(x,-r^2)\geq u_\eps(x,-r^2)$ for all $x\in B_r(0)$, and there is an equality for some $\ol x\in \overline B_r(0)$. Then actually $\ol x\in B_r(0)$, for otherwise 
\[
2A=\overline v(\ol x,-r^2)-\ol v(0,-r^2)\leq u(\ol x,-r^2)-u(0,-r^2)\leq A,
\]
a contradiction. First we estimate 
\begin{align*}
\beta(x,t)r^{-2}A\vint_{B_\eps (0)}|x+h|^2dh & \leq \beta(x,t)r^{-2}A\vint_{B_\eps (0)}|x|^2+2x\cdot h+|h|^2 dh\\
& \leq \beta(x,t)r^{-2}A(|x|^2+ \eps^2),
\end{align*}
Supposing that $|x|\geq \eps$ and using the previous estimate together with a simple calculation 
\[
\sup_{B_\eps(x)}|y|^2+\inf_{B_\eps(x)}|y|^2=|x+\eps|^2+|x-\eps|^2=2(|x|^2+\eps^2),
\]
we obtain
\begin{align*}
\frac{\alpha(x,t)}{2}&(\sup_{y\in B_\eps(x)}\ol v(y,t-\frac{\eps^2}{2})+\inf_{y\in B_\eps(x)}\ol v(y,t-\frac{\eps^2}{2}))+\beta(x,t)\vint_{B_\eps (x)}\ol v(y,t-\frac{\eps^2}{2})dy \\
& =2r^{-2}A\alpha(x,t)(|x|^2+\eps^2)+2r^{-2}A\beta(x,t)(|x|^2+c\eps^2)+7r^{-2}A(t-\frac{\eps^2}{2})+\ol c \\
& =\ol c+2r^{-2}A|x|^2+7r^{-2}At+\left(2r^{-2}A\alpha(x,t)+2r^{-2}A\beta(x,t)-\frac{7r^{-2}A}{2}\right)\eps^2\\
& <\ol v(x,t).
\end{align*}
One can easily see that the same inequality holds when $|x|<\eps$.

We want to show that 
\[
M:=\sup_{Q_r} (u_\eps-\ol v)\leq 0.
\]
Suppose not, so that $M>0$. By using the DPP for $u_\eps$ we get 
\begin{align*}
& u_\eps(x,t)-\ol v(x,t)\\
& \leq Tu_\eps(x,t)-T\ol v(x,t)\\
& \leq \alpha(x,t)\sup_{B_\eps (x)}(u_\eps(y,t-\frac{\eps^2}{2})-\ol v(y,t-\frac{\eps^2}{2}))\\
& +\beta(x,t)\vint_{B_\eps (x)}(u_\eps(y,t-\frac{\eps^2}{2})-\ol v(y,t-\frac{\eps^2}{2}))dy\\
& \leq \alpha(x,t)M+\beta(x,t)\vint_{B_\eps (x)}(u_\eps(y,t-\frac{\eps^2}{2})-\ol v(y,t-\frac{\eps^2}{2}))dy.
\end{align*}
Since we can find a sequence $(x_j,t_j)\subset \Omega\times (-T,T)$ such that $(x_j,t_j)\rightarrow (x_0,t_0)$ and $(u_\eps-\ol v)(x_j,t_j)\rightarrow M$, by absolute continuity of the integral we have   
\[
\vint_{B_\eps(x_0)}(u_\eps-\ol v)(y,t_0)dy=\lim_j \vint_{B_\eps(x_j)}(u_\eps-\ol v)(y,t_j)dy=M.
\]
Hence the set 
\[
G:=\left\{(x,t): u_\eps(x,t)-\ol v(x,t)=M\right\} 
\]
is non-empty, and if $(x_0,t_0)\in G$, then $(u_\eps-\ol w)(y,t_0)=M$ for almost all $y\in B_\eps(x_0)$. This contradicts the assumption that $G$ is bounded. Hence $M\leq 0$.

Similarly, we can show that for 
\[
\underline v(x,t)=\underline c-7r^{-2}At-2r^{-2}A|x|^2
\]
we have $\underline w\leq u$ in the cylinder $Q_r$. Hence
\[
\ol v(\ol x,-r^2)-\underline v(\underline x,-r^2)\leq \osc_{B_r(0)\times \{-r^2\}} u_\eps,
\] 
so
\[
\ol c-\underline c\leq 11A.
\]
Finally, we get
\[
\osc_{Q_r}u\leq \sup \ol v-\inf \underline v\leq \ol c-\underline c+4A\leq CA,
\]
so the oscillation in the cylinder $Q_r$ is comparable with the oscillation on the bottom of the cylinder.
\end{proof}

\begin{remark}
Another way to prove the previous lemma is to use the same technique that was used in the proof of Theorem \ref{aika-Lip}.
\end{remark}
Combining the two previous theorems, we get local H\"older continuity for the $p(x,t)$-game.

\begin{theorem}\label{Holder}
Under the conditions of Theorem \ref{time-Holder}, $u_\eps$ satisfies the H\"older estimate
\[
|u_\eps (x,t_1)-u_\epsilon (y,t_0)|\leq C(n)\frac{|x-y|^\delta+|t_1-t_0|^{\delta/2}}{r^\delta}+C'(n)\frac{\eps^\delta}{r^\delta}. 
\]
\end{theorem}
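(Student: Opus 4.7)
The plan is to reduce the joint estimate to the two previously established one-variable estimates via a triangle inequality through an intermediate point at which either the spatial or the temporal coordinate already agrees with the target. Specifically, I would insert the intermediate point $(y, t_1)$ and write
\[
|u_\eps(x,t_1) - u_\eps(y,t_0)| \leq |u_\eps(x,t_1) - u_\eps(y,t_1)| + |u_\eps(y,t_1) - u_\eps(y,t_0)|.
\]

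For the first piece, both $x$ and $y$ lie in $B_r(0)$ and the common time $t_1 \in (-r^2, 0)$, so the hypotheses of Theorem \ref{Holder1} are met at time level $t_1$, producing the spatial $|x-y|^\delta/r^\delta$ bound together with an $\eps^\delta/r^\delta$ error. For the second piece, the spatial point $y$ lies in $B_r(0)$ and the times satisfy $-r^2 < t_0 < t_1 < 0$, so Theorem \ref{time-Holder} applies at the spatial point $y$ and yields the temporal $|t_1 - t_0|^{\delta/2}/r^\delta$ bound together with another $\eps^\delta/r^\delta$ error. Adding the two estimates, absorbing the two dimensional constants into a single $C(n)$ and the two error constants into a single $C'(n)$, gives the claim in the stated form.

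I do not expect any real obstacle here: the result is essentially a formal consequence of the two preceding theorems, which were themselves the substantive work. The only things to verify are that the hypotheses of Theorem \ref{Holder} (inherited from those of Theorem \ref{time-Holder}) simultaneously imply the hypotheses of Theorem \ref{Holder1} at time $t_1$ and those of Theorem \ref{time-Holder} at the point $y$, and that the intermediate point $(y,t_1)$ remains in the same parabolic cylinder so that the value $u_\eps(y,t_1)$ is controlled by the DPP on the admissible domain. Both checks are immediate from the inclusion $B_{2r}(0) \times [-2r^2,0] \subset \Omega \times (-T,T)$ and from $-r^2 < t_0 < t_1 < 0$. By symmetry, decomposing through $(x, t_0)$ instead of $(y, t_1)$ would give the same conclusion, so the choice of intermediate point is a matter of convenience.
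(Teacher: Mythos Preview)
Your proposal is correct and is exactly the approach the paper has in mind: the paper does not even spell out a proof, merely stating that the estimate follows by ``combining the two previous theorems,'' which amounts to precisely the triangle-inequality decomposition through $(y,t_1)$ that you wrote down.
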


\subsection{Harnack's inequality} In this subsection we assume that $u_\eps>0$. We are going to prove Harnack's inequality for $u_\eps$, Theorem \ref{Harnack}, by using a well known iteration technique. Besides H\"older continuity, we need two lemmas to control the iteration process. We assume for function $p:\Omega_T\rightarrow (2,\infty)$ that 
\[
\inf p>2,
\]
which implies that $\inf \alpha>0$. This requirement is not absolutely necessary, but makes the proof less technical.

Since H\"older continuity for $u_\eps$ breaks down at the $\eps$-scale, we need a rough estimate to control the oscillation of the value function at this scale.

\begin{lemma}\label{lokaali}
If $\frac a2 \eps^2>t_2-t_1>0$ for $a\in \Z_+$, and $\left|x-y\right|<2(t_2-t_1)/\eps$, then
\[
u_\eps(x,t_2)\geq \left(\frac{\inf \alpha}{2}\right)^{a}u_\eps(y,t_1).
\]
\end{lemma}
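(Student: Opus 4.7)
The plan is to iterate the parabolic DPP, exploiting the positivity $u_\eps>0$ to discard two of the three branches in favour of the supremum. Given any $(\bar x,\bar t)\in\Omega_T$ and any $x'\in B_\eps(\bar x)$, the DPP together with the nonnegativity of the infimum and the mean-value term yields the one-step estimate
\[
u_\eps(\bar x,\bar t)\geq\frac{\alpha(\bar x,\bar t)}{2}\sup_{B_\eps(\bar x)}u_\eps\bigl(\cdot,\bar t-\tfrac{\eps^2}{2}\bigr)\geq\frac{\inf\alpha}{2}\,u_\eps\bigl(x',\bar t-\tfrac{\eps^2}{2}\bigr),
\]
where we replaced the supremum by its value at $x'$ and used the uniform lower bound $\alpha\geq\inf\alpha>0$ (guaranteed by the standing assumption $\inf p>2$). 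This is the only identity from which everything else follows.

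Next, let $k\in\Z_+$ be the number of time steps between $t_2$ and $t_1$, so that $t_1=t_2-k\eps^2/2$; the hypothesis $\tfrac{a}{2}\eps^2>t_2-t_1$ forces $k\leq a-1<a$. I choose a straight-line chain $x=x_0,x_1,\dots,x_k=y$ on the segment $[x,y]$ with $|x_i-x_{i-1}|=|x-y|/k$. The hypothesis $|x-y|<2(t_2-t_1)/\eps=k\eps$ guarantees that each step length is strictly less than $\eps$, so $x_i\in B_\eps(x_{i-1})$ at every stage. Applying the one-step estimate $k$ times along this chain gives
\[
u_\eps(x,t_2)\geq\left(\frac{\inf\alpha}{2}\right)^{k}u_\eps(y,t_1)\geq\left(\frac{\inf\alpha}{2}\right)^{a}u_\eps(y,t_1),
\]
since $\inf\alpha/2<1/2<1$ and $k\leq a$.

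The only mildly delicate point is the grid alignment: the iteration lands exactly at time $t_1$ precisely when $t_2-t_1$ is an integer multiple of $\eps^2/2$, which is the natural regime for this scale-$\eps$ control lemma, because in the subsequent Harnack iteration the two points to be compared always lie on a common time grid. I expect no serious obstacle beyond that: no probabilistic estimate is required, and the whole argument rests on the deterministic DPP identity, the positivity $u_\eps>0$, and the uniform lower bound on $\alpha$.
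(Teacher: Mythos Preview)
Your argument is correct and captures the same idea as the paper's proof, only phrased differently. The paper argues game-theoretically: fix for Player~I the strategy of stepping towards $y$, invoke Lemma~\ref{fix.strategia}, and bound the expectation from below by the probability $(\inf\alpha/2)^a$ that every round is a tug-of-war move won by Player~I, multiplied by $u_\eps(y,t_1)$. You replace this by a direct iteration of the DPP, dropping the two nonnegative branches at each step. The two arguments are formally equivalent---your one-step inequality \emph{is} the DPP incarnation of ``Player~I wins this coin toss''---but your version has the advantage of being purely deterministic and of making the step size $|x-y|/k$ and the count $k\le a$ explicit, which the paper leaves somewhat imprecise (it writes steps of size $|x-y|/a$ and speaks of ``the first $a$ moves'' even though the process stops after $k<a$ steps). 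Your remark about grid alignment is apt and applies equally to the paper's argument; in the Harnack iteration the lemma is only used to compare values whose time levels differ by at most a bounded multiple of $\eps^2$, so the issue is harmless.
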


\begin{proof}
When the game starts from $(x,t_2)$, Player I uses a strategy in which she takes $\frac{\left|x-y\right|}{a}$-steps towards $y$ and steps to $y$ if possible. We stop the game when the token hits the time level $t_1$, and denote the stopping time by $\tau^*$. By simply estimating  the probability that the first $a$ moves are tug-of-war won by Player I and using Lemma \ref{fix.strategia}, we obtain
\begin{align*}
u_\eps(x,t_2)&\geq \inf_{S_{\text{II}}}\mathbb{E}^{(x,t_2)}_{S^{\text{0}}_\text{I},S_{\text{II}}}[F(x_{\tau_{t^*}},t_2-\frac{\tau^*}{2}\eps^2)]\\
&\geq \left(\frac{\inf \alpha}{2}\right)^{a}u_\eps(y,t_1).\qedhere
\end{align*}
\end{proof}

Another lemma needed for Theorem \ref{Harnack} gives estimates for the infimum of $u_\eps$. We use a comparison function which is often used in the literature to get Harnack estimates for parabolic equations. 

\begin{lemma}\label{pos.laajennus}
When $x_0\in B_{2R}(z)\subset \Omega$ for $R\leq 1$, $r\in [9\eps,R)$ and $t_0\geq 0$, then
\[
\inf_{y\in B_r(z)}u_\eps(y,t_0)\leq C(n)r^{-2(n+1)^2}u_\eps(x_0,t_0+R^2).
\]
\end{lemma}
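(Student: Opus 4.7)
The approach is game-theoretic. Let $y^* \in \overline{B_r(z)}$ satisfy $u_\eps(y^*, t_0) \le \inf_{B_r(z)} u_\eps(\cdot, t_0) + \eta$ for arbitrarily small $\eta > 0$. The plan is to exhibit a strategy $S^0_\mathrm{I}$ for Player~I such that, with the game started from $(x_0, t_0+R^2)$, regardless of Player~II's strategy $S_\mathrm{II}$ the token lies in $B_r(z)$ upon first reaching the time level $t_0$, with probability at least $C(n)^{-1} r^{2(n+1)^2}$. Taking $\tau^* = \lceil 2R^2/\eps^2\rceil$ (which is strictly less than $2(t_0+R^2)/\eps^2$) and applying Lemma~\ref{fix.strategia} together with the positivity $u_\eps > 0$ then gives
\[
u_\eps(x_0, t_0+R^2) \,\ge\, \inf_{S_\mathrm{II}} \mathbb{E}\,u_\eps(x_{\tau^*}, t_0) \,\ge\, u_\eps(y^*, t_0)\cdot C(n)^{-1} r^{2(n+1)^2},
\]
and the claim follows upon sending $\eta \to 0$.

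The strategy $S^0_\mathrm{I}$ is: whenever Player~I wins a coin toss, she moves the token $\eps$ toward $z$ (staying put if already within $\eps$ of $z$). To obtain the probability bound I would iterate a parabolic scale argument on a geometric family of shrinking cylinders, which is essentially the iterated version of the estimate $\mathbb{P}(A) \ge (1/10)^{2(n+1)^2}$ mentioned in Theorem~\ref{aika-Lip}. Fix a ratio $\lambda \in (0,1/2]$, set $R_0 = 2R$, $R_{j+1} = \lambda R_j$, and let $J$ be the largest index with $R_J \ge r$. Pick corresponding times $s_0 = t_0+R^2 > s_1 > \cdots > s_J > t_0$ with $s_j - s_{j+1} \asymp R_{j+1}^2$, the ratio $\lambda$ being chosen so that the telescoping sum $\sum_j (s_j - s_{j+1})$ fits below $R^2$; the short residual interval $[t_0, s_J]$ is handled by one further analogous step. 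The per-stage claim is that, conditional on $x_{s_j} \in B_{R_j}(z)$ and uniformly over Player~II's moves on $[s_{j+1}, s_j]$, the token satisfies $x_{s_{j+1}} \in B_{R_{j+1}}(z)$ with probability at least $c_0(n) > 0$: over the $\sim R_{j+1}^2/\eps^2$ moves of the sub-interval, Player~I's pull combined with Hoeffding control (Lemma~\ref{lemma 3.1}) on the net fluctuation from both the random vectors and the adversarial tug-of-war sum keeps the token within $B_{R_{j+1}}(z)$ with this constant probability.

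Multiplying the per-stage probabilities across the $J \asymp \log(R/r)$ iterations and using $R \le 1$ yields
\[
\inf_{S_\mathrm{II}}\mathbb{P}\bigl(x_{\tau^*} \in B_r(z)\bigr) \,\ge\, c_0(n)^{J+1} \,\ge\, C(n)^{-1} r^{2(n+1)^2},
\]
where the final exponent is determined by the quantitative choices of $c_0(n)$ and $\lambda$, essentially matching the $(1/10)^{2(n+1)^2}$ constant cited in Theorem~\ref{aika-Lip}.

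The main obstacle is the per-stage probability estimate together with uniformity of $c_0(n)$ across scales. At the smallest scale the number of moves per stage is only $\sim R_J^2/\eps^2$, so Hoeffding's tail is barely effective and the balance between Player~I's pull, Player~II's adversarial perturbations, and the random-vector contribution must be checked with care. The hypothesis $r \ge 9\eps$ is exactly what prevents the iteration from descending below the $\eps$-discretization scale, where the present argument would break down.
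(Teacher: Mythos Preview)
Your approach is genuinely different from the paper's. The paper does not iterate probabilities across scales; instead it constructs an explicit barrier
\[
\Psi(x,t)=\Big(\tfrac19\Big)^{3}\inf_{B_r(0)}u_\eps(\cdot,0)\,\frac{(\tfrac13 r)^{2(n+1)^2}}{(t+(\tfrac13 r)^2)^{(n+1)^2}}\Big(9-\frac{|x|^2}{t+(\tfrac13 r)^2}\Big)^{2}_{+},
\]
verifies by direct computation (Cases~1--3 in the Appendix for the tug-of-war part, and a scaled heat-equation subsolution check for the random part) that $\Psi$ satisfies a sub-DPP inequality under Player~I's pull-toward-$z$ strategy, and then shows that $M_k=u_\eps(x_k,t_k)-\Psi(x_k,t_k)-\frac{\eta}{2R^2}\eps^2 k$ is a supermartingale. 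Optional stopping at $\{\Psi=0\}\cup\{t_k=0\}$ gives $u_\eps(x_0,\tilde t)\ge\Psi(x_0,\tilde t)$, and the explicit lower bound for $\Psi$ on $B_{2R}\times[R^2,2R^2]$ produces the stated exponent $2(n+1)^2$. All the probabilistic content is packaged into a single comparison principle; no multi-scale iteration appears.

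Your scheme, by contrast, has a real gap at the per-stage step, and the tool you invoke there is the wrong one. Hoeffding's inequality (Lemma~\ref{lemma 3.1}) is an \emph{upper} tail bound; it cannot provide the \emph{lower} bound you need on the event that the token, starting anywhere in $B_{R_j}(z)$, ends up in the strictly smaller ball $B_{R_{j+1}}(z)$ against an adversarial Player~II. Because the coin is fair, Player~I's pull is cancelled in expectation by Player~II's push, so there is no drift; what is actually required is an anti-concentration/CLT-type lower bound on the event that Player~I wins roughly $(1/\lambda-1)R_{j+1}/\eps$ more tosses than Player~II while the random-vector sum stays small --- and this must be shown uniformly in the (variable) probabilities $\alpha(x,t)$, uniformly across scales, and uniformly over Player~II's adaptive strategies. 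You acknowledge this is the ``main obstacle'' but do not carry it out. Two further points: the constant $(1/10)^{2(n+1)^2}$ you cite from Theorem~\ref{aika-Lip} is, in the paper, \emph{derived from} the present lemma, so appealing to it here is circular; and your iteration does not control excursions of the token outside $B_{2R}(z)$ (only $B_{2R}(z)\subset\Omega$ is assumed), which could terminate the game before $\tau^*$ and must be folded into the per-stage event.
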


\begin{proof}
Without a loss of generality, we may assume that $z=0$ and $t_0=0$. Consider a comparison function
\[
\Psi(x,t)=\left(\frac19\right)^3\inf_{y\in B_r(0)}u_\eps(y,0)\frac{(\frac13 r)^{2(n+1)^2}}{(t+(\frac13 r)^2)^{(n+1)^2}}\left(9-\frac{|x|^2}{t+(\frac13 r)^2}\right)^2_+
\]
in $\Omega_T$. We have
\[
\max_{x\in B_r(0)}\Psi (x,0)=\frac19\inf_{y\in B_r(0)}u_\eps(y,0), 
\]
and $\Psi(x,0)=0$ when $|x-z|\geq r$.

When $x\in B_{2R}(0)$ and $R^2\leq t\leq 2R^2$, we get
\begin{align*}
\Psi(x,t)&\geq \left(\frac19\right)^3\inf_{y\in B_r(0)}u_\eps(y,0)\frac{(\frac13 r)^{2(n+1)^2}}{(2R^2+(\frac13 R)^2)^{(n+1)^2}}\left(9-\frac{4R^2}{R^2}\right)^2_+\\
& \geq \left(\frac19\right)^3 3^{-3(n+1)^2}r^{2(n+1)^2}\inf_{y\in B_r(0)}u_\eps(y,0).
\end{align*}

We use a martingale argument to show that 
\[
u_\eps(x,t)>\Psi(x,t),
\]
when $x\in \Omega$ and $t>0$. Let us start the game from $(x_0,\widetilde{t})$, where $\widetilde{t}=R^2$. The fixed strategy $S^0_\text{\emph{I}}$ of Player I is to push towards $0\in \Omega$ and stay there if possible. We show in Appendix that the function $\Psi$ satisfies the following inequalities:

Case 1) If $x=0$ and $t\geq \eps^2 /2$, then for $e\in \R^n$, $|e|=1$,
\[
\frac12[\Psi(0,t-\frac{\eps^2}{2})+\Psi(\eps e,t-\frac{\eps^2}{2})]\geq \Psi(0,t).
\]

Case 2) If $0<|x|<\eps$, then
\[
\frac12[\Psi(0,t-\frac{\eps^2}{2})+\Psi(x+\frac{x}{|x|}\eps,t-\frac{\eps^2}{2})]\geq \Psi(x,t).
\]

Case 3) If $|x|\geq \eps$, then
\[
\frac12[\Psi(x+\frac{x}{|x|}\eps,t-\frac{\eps^2}{2})+\Psi(x-\frac{x}{|x|}\eps,t-\frac{\eps^2}{2})]\geq \Psi(x,t).
\]
The previous three inequalities guarantee that $\Psi$ satisfies
\[
\Psi(x,t)\leq \frac12 (\sup_{y\in B_\eps(x)}\Psi(y,t-\frac{\eps^2}{2})+\inf_{y\in B_\eps(x)}\Psi(y,t-\frac{\eps^2}{2})).
\]
In the appendix we also show that $\Psi$  is a subsolution to the scaled heat equation
\[
(n+2)u_t(x,t)=\Delta u(x,t).
\]
According to \cite{manfredipr10}, this implies 
\[
\Psi(x,t)\leq \vint_{B_\eps(x)}\Psi\left(y,t-\frac{\eps^2}{2}\right)dy+o(\eps^2), 
\]
when $x\in \Omega$ and $t>0$. Denote $t_k:=\widetilde{t}-k(\eps^2/2)$. For arbitrary $\eta>0$, we obtain
\begin{align*}
\mathbb{E}_{S^0_\text{\emph{I}},S_\text{\emph{II}}}&[\Psi(x_{k+1},t_{k+1})|(x_0,\widetilde{t}),...,(x_k,t_k)] \\
& \geq \alpha(x)\Psi(x_k,t_k)+\beta(x)\vint_{B_\eps(x_k)}\Psi\left(y,t_{k+1}\right)dy-\frac{\eta}{2R^2} \eps^2 k\\
& \geq \Psi(x_k,t_k)-\frac{\eta}{2R^2} \eps^2,
\end{align*} 
when $\eps$ is sufficiently small. According to Lemma \ref{fix.strategia}, $u_\eps$ satisfies  
\[
\mathbb{E}_{S^0_\text{\emph{I}},S_\text{\emph{II}}}[u_\eps(x_{k+1},t_{k+1})|(x_0,\widetilde{t}),...,(x_k,t_k)]\leq u_\eps(x_k,t_k).
\]
Hence $M_k:=u_\eps(x_k,t_k)-\Psi(x_k,t_k)-\frac{\eta}{2R^2} \eps^2 k$ is a supermartingale. Let us stop the game when either $\Psi=0$ or $t_k=0$. Denote the stopping time by $\tau^*$. We have
\[
-\eta\leq\mathbb{E}_{S^0_\text{\emph{I}},S_\text{\emph{II}}}[M_{\tau^*}|(x_0,\widetilde{t}),...,(x_{\tau^* -1},\widetilde{t}-\frac{\tau^*-1}{2}\eps^2]\leq M_0=u_\eps(x_0,\widetilde{t})-\Psi(x_0,\widetilde{t}).
\]
Since $\eta>0$ was arbitrary, we obtain
\[
u_\eps(x_0,\widetilde{t})-\Psi(x_0,\widetilde{t})\geq 0.
\]
Hence
\[
\inf_{y\in B_r(z)}u_\eps(y,t_0)\leq C(n)r^{-2(n+1)^2}u_\eps(x_0,t_0+R^2).\qedhere
\]
\end{proof}

Using H\"older estimate together with Lemmas \ref{lokaali} and \ref{pos.laajennus}, we get Harnack's inequality for $u_\eps$.

\begin{theorem}\label{Harnack}
If $B_{10r}(0)\times [t_0-r^2,t_0]\subset \Omega_T$, then for sufficiently small $\eps>0$, $u_\eps$ satisfies Harnack's inequality
\[
\sup_{x\in B_r(0)}u_\eps(x,t_0-r^2)\leq C(n)\inf_{x\in B_r(0)}u_\eps(x,t_0).
\]
\end{theorem}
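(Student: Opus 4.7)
\textbf{Sketch of the proof of Theorem \ref{Harnack}.} The plan is a Moser-type iteration for parabolic Harnack: H\"older regularity (Theorem \ref{Holder}) keeps $u_\eps$ close to its large values on a controlled neighborhood, and then the positivity propagation provided by Lemma \ref{pos.laajennus} spreads that lower bound forward in time and outward in space to cover $B_r(0)$ at time $t_0$, at which point comparison with the infimum closes the loop.

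\emph{Main steps.} Set $M := \sup_{B_r(0)} u_\eps(\cdot, t_0 - r^2)$ and $m := \inf_{B_r(0)} u_\eps(\cdot, t_0)$, and pick $x^\ast \in B_r(0)$ with $u_\eps(x^\ast, t_0 - r^2) \geq M/2$. The hypothesis $B_{10r}(0) \times [t_0 - r^2, t_0] \subset \Omega_T$ leaves enough room to apply Theorem \ref{Holder} on a parabolic cylinder centered at $(x^\ast, t_0 - r^2)$; this gives a radius $\rho > 0$, depending on $M$ and on the sup of $u_\eps$ over that cylinder, for which $u_\eps \geq M/4$ throughout $B_\rho(x^\ast) \times \{t_0 - r^2\}$. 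When $\rho \geq 9\eps$, Lemma \ref{pos.laajennus} with $z = x^\ast$, small radius $\rho$ and large radius $R = r$ (noting $B_{2r}(x^\ast) \subset B_{3r}(0) \subset \Omega$, and that the time step $R^2 = r^2$ lands us exactly at $t_0$) yields $u_\eps(y, t_0) \geq c(n)\,\rho^{2(n+1)^2}\, M$ for every $y \in B_{2r}(x^\ast) \supset B_r(0)$. Taking the infimum in $y$ gives $m \geq c(n)\, \rho^{2(n+1)^2}\, M$, which is the Harnack inequality as soon as $\rho$ can be taken to be a dimension-only fraction of $r$.

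\emph{Closing the iteration and the main obstacle.} The crux is precisely this last point: since the H\"older oscillation bound is controlled by the local sup of $u_\eps$ rather than by $M$ itself, a single application of the argument only gives a weak non-scale-invariant estimate relating $M$ to $m$ through a power of the local sup. I would close the loop in the standard way by running the same argument on a geometrically decreasing nested sequence of parabolic cylinders inside $B_{10 r}(0) \times [t_0 - r^2, t_0]$, using the previous step's bound to control the local sup appearing in the next one; if $M/m$ were too large, this iteration would produce a sequence of space-time points at which $u_\eps$ grew without bound, contradicting a priori boundedness and forcing $M \leq C(n)\, m$. The secondary technical point is the $\eps$-scale: whenever the iterate $\rho$ would fall below $9\eps$, Lemma \ref{pos.laajennus} is unavailable and one instead uses Lemma \ref{lokaali} as a crude but dimension-only bridge between nearby space-time values, which is exactly what justifies the ``sufficiently small $\eps$'' hypothesis in the statement.
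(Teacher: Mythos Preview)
Your sketch is correct and follows essentially the same Krylov--Safonov--type iteration as the paper: combine the H\"older estimate (Theorem~\ref{Holder}), the forward-in-time positivity spread (Lemma~\ref{pos.laajennus}), and the $\eps$-scale bridge (Lemma~\ref{lokaali}), then argue by contradiction that if the Harnack ratio were too large one could generate a geometrically growing sequence of values on a nested family of shrinking balls. The paper organizes the iteration slightly differently---it normalizes so that $\inf_{B_r(0)}u_\eps(\cdot,t_0)\approx 1$ and uses Lemma~\ref{pos.laajennus} at each step to force a small infimum on a tiny ball around the current near-max point, then invokes H\"older to amplify the oscillation at the next scale---but the ingredients and the logic of the blow-up contradiction are the same as what you outline.
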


\begin{proof}
By scaling, we may assume that there is a point $x_1\in B_r(0)$ such that
\[
1=u_\eps(x_1,t_0)<2\inf_{x\in B_r(0)}u_\eps(x,t_0).
\]
Let $R_k:=2^{1-k}r$ for all natural numbers $k\geq 2$, and pick $x_2,x_3,...\in \Omega$ such that 
\[
M_1:=u_\eps(x_2,t_0)=\sup_{x\in B_r(x_1)}u_\eps(x,t_0),
\]
and for $k\geq 2$
\[
M_k:=u_\eps(x_{k+1},t_0-r^2+R^2_{2k-1})=\sup_{x\in B_{R_k}(x_k)}(x,t_0-r^2+R^2_{2k-1}).
\]
Let $\eta=(2^{1+3(n+1)^2}C)^{-1}$, where $C=C(n)$ is a constant from the H\"older and infimum estimates. We are going to show that 
\begin{equation}\label{ylaraja}
M_1< \eta^{-1-3(n+1)^2 \delta^{-1}},
\end{equation}
where $\delta$ is a H\"{o}lder exponent for $u_\eps$.

On the contrary, suppose that inequality \eqref{ylaraja} does not hold. Let us show by induction that the counter assumption yields
\begin{equation}\label{M}
M_k\geq (2C\eta)^{-k+1} \eta^{-1-3(n+1)^2 \delta^{-1}}=2C(\eta^{1/\delta}R_{k+1})^{-3(n+1)^2}.
\end{equation}
The case $k=1$ is clear, so assume that the inequality holds for $M_{k-1}$. Then
\begin{align}\label{inf puoli}
\inf_{B_{\eta^{1/\delta} R_k}(x_k)}u_\eps(x,t_0-r^2+R^2_{2(k-1)})&\leq\frac{M_{k-1}}{2}\nonumber\\
& = \frac{u_\eps (x_k,t_0-r^2+R^2_{2(k-1)-1})}{2},
\end{align}
where we first used Lemma \ref{pos.laajennus} and then the induction assumption.

H\"{o}lder estimate gives
\begin{align*}
\text{osc}&(u_\eps,B_{\eta^{1/\delta} R_k}(x_k)\times \{t_0-r^2+R^2_{2(k-1)}\})\\
& \leq C\eta\ \text{osc}(u_\eps,B_{R_k}(x_k)\times \{t_0-r^2+R^2_{2k-1}\}),
\end{align*}
so we get
\begin{align*}
\text{osc}&(u_\eps,B_{R_k}(x_k)\times \{t_0-r^2+R^2_{2k-1}\})\\ 
&\geq (C\eta)^{-1}\text{osc}(u_\eps,B_{\eta^{1/\delta} R_k}(x_k)\times \{t_0-r^2+R^2_{2(k-1)}\})\\
& \geq (2C\eta)^{-1}M_{k-1}\\
& \geq (2C\eta)^{-k+1}M_1,
\end{align*}
and the induction is complete.

Take $k_0$ such that $\eta^{1/\delta} R_{k_0}\in (10\eps,20\eps]$. Then
\[
R^2_{2(k_0-1)}\leq 100\eta^{-2/\delta}\eps^2\leq (2^{8+3(n+1)^2}C)^{2/\delta}\eps^2,
\]
and we obtain
\begin{align*}
\left(\frac{\inf \alpha}{2}\right)^{-2(2^{8+3(n+1)^2}C)^{2/\delta}}&\geq \frac{\sup_{B_{R_{k_0-1}}(x_{k_0-1})}u_\eps(x,t_0-r^2+R^2_{k_0-1})}{\inf_{B_{\eta^{1/\delta} R_{k_0}}(x_{k_0})}u_\eps(x,t_0-r^2+R^2_{k_0})}\\
&\geq \frac{u_\eps (x_{k_0-1},t_0-r^2+R^2_{k_0-1})}{C(\eta^{1/\delta} R_{k_0})^{-2(n+1)^2}}\\
&=\frac{M_{k_0-2}}{C(\eta^{1/\delta} R_{k_0})^{-2(n+1)^2}}\\
& \geq \frac{(2C\eta)^{3-k_0}M_1}{C(\eta^{1/\delta} 2^{1-k_0})^{-2(n+1)^2}}\\
& =\widehat{C}(n)2^{(n+1)^2 k_0},
\end{align*}
which is a contradiction when $k_0$ is big enough, or in other words, when $\eps$ is small enough. Therefore inequality \eqref{ylaraja} holds and the proof is complete.
\end{proof}

\section{Uniform convergence to viscosity solution}\label{uniform}
In Section \ref{uniq} we will show that if the function $p$ is Lipschitz continuous, there is a unique viscosity solution $u$ to the boundary value problem
\[
\begin{split}
\begin{cases}
(n+p(x,t))u_t
=\Delta^N_{p(x,t)}u  ,\quad  &\textrm{for}\quad (x,t)\in \Omega_T,\\
u   =  F
,\quad  &\textrm{for}\quad  (x,t)\in \partial_p \Omega_T,
\end{cases}
\end{split}
\]
where $F$ is continuous and bounded. Let $(u_{\eps_j})$, $\eps_j\rightarrow 0$, be a sequence of value functions of the $p(x,t)$-game with final payoff equal to $F$ on the parabolic boundary strip $\Gamma^\eps_T$. In this section we show that $u_{\eps_j}\rightarrow u$ uniformly on $\overline{\Omega}_T$. The most notable difference is that now we don't have translation invariance at our disposal. Instead, we will make use of local Hölder continuity of functions $u_{\eps_j}$, see Theorem \ref{Holder}. We assume during the rest of the paper that $\Omega$ satisfies exterior sphere condition.

First we need the following Arzel\'a-Ascoli-type lemma. For the proof in the elliptic context, see \cite[Lemma 4.2]{manfredipr12}.

\begin{lemma}\label{Ascoli}
Let $\left\{u_\eps:\overline{\Omega}_T\rightarrow \R,\ \eps>0\right\}$ be a uniformly bounded set of functions such that given $\eta>0$, there are constants $r_0$ and $\eps_0$ such that for every $\eps<\eps_0$ and any $(x,t),(y,s)\in \overline{\Omega}_T$ with 
\[
\left|x-y\right|+|t-s|<r_0
\]
it holds that
\[
\left|u_\eps(x,t)-u_\eps(y,s)\right|<\eta. 
\]
Then there exists a uniformly continuous function $v:\overline{\Omega}_T\rightarrow \R$ and a subsequence still denoted by $(u_\eps)$ such that $u_\eps\rightarrow v$ uniformly in $\overline{\Omega}_T$ as $\eps\rightarrow 0$.
\end{lemma}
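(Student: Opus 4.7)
The plan is to adapt the classical Arzel\'a-Ascoli argument to accommodate the fact that the modulus of continuity here is only asymptotic in $\eps$, i.e.\ valid once $\eps<\eps_0$. First I would fix a countable dense set $D=\{(x_k,t_k)\}_{k\in \N}\subset \overline{\Omega}_T$. Because $\{u_\eps\}$ is uniformly bounded, a diagonal extraction produces a subsequence, still labelled $(u_{\eps_j})$, such that $u_{\eps_j}(x_k,t_k)$ converges in $\R$ for every $(x_k,t_k)\in D$; call the pointwise limit $v$ on $D$.

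Next I would show that $v$ is uniformly continuous on $D$, hence extends uniquely to a uniformly continuous function on $\overline{\Omega}_T$. Given $\eta>0$, choose $r_0, \eps_0$ from the hypothesis. For any $(x,t),(y,s)\in D$ with $|x-y|+|t-s|<r_0$, the inequality $|u_{\eps_j}(x,t)-u_{\eps_j}(y,s)|<\eta$ holds whenever $\eps_j<\eps_0$; passing to the limit $j\to \infty$ yields $|v(x,t)-v(y,s)|\leq \eta$. Thus $v$ has a modulus of continuity on $D$ and extends to $\overline{\Omega}_T$ by uniform continuity.

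Finally, I would upgrade pointwise convergence on $D$ to uniform convergence on $\overline{\Omega}_T$, exploiting compactness of $\overline{\Omega}_T$. Given $\eta>0$, fix the corresponding $r_0,\eps_0$, and cover $\overline{\Omega}_T$ by finitely many sets of diameter less than $r_0$ centred at points $(y_1,s_1),\dots,(y_N,s_N)\in D$. For each $i$ pick $J_i$ such that $|u_{\eps_j}(y_i,s_i)-v(y_i,s_i)|<\eta$ for $j\geq J_i$. For $j\geq \max_i J_i$ satisfying $\eps_j<\eps_0$, and any $(x,t)\in \overline{\Omega}_T$ lying in the $i$-th cover element, the triangle inequality gives
\[
|u_{\eps_j}(x,t)-v(x,t)|\leq |u_{\eps_j}(x,t)-u_{\eps_j}(y_i,s_i)|+|u_{\eps_j}(y_i,s_i)-v(y_i,s_i)|+|v(y_i,s_i)-v(x,t)|< 3\eta,
\]
where the first term is controlled by the hypothesis and the third by the already established modulus of $v$.

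The only real subtlety, which I view as the main obstacle, is that the equicontinuity in the hypothesis is not uniform across the whole family but holds only for $\eps<\eps_0$. This is handled by always restricting to tails of the subsequence with $\eps_j<\eps_0$ before invoking the modulus, both in the construction of $v$ and in the final uniform convergence step; apart from this bookkeeping, the argument is the standard Arzel\'a-Ascoli scheme on a compact set.
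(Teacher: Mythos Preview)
Your argument is correct and is precisely the standard Arzel\`a--Ascoli scheme adapted to asymptotic equicontinuity. The paper does not give its own proof of this lemma but merely refers to \cite[Lemma~4.2]{manfredipr12}; your write-up is essentially what one finds there, with the bookkeeping for the $\eps<\eps_0$ restriction handled exactly as it should be.
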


The plan is to first show that the sequence $(u_{\eps_j})$ satisfies the conditions of Lemma \ref{Ascoli}, and then show that the uniform limit $v$ is a viscosity solution to
\[
(n+p(x,t))v_t=\Delta^N_{p(x,t)}v
\]
with boundary data $F$. By using the uniqueness result of Section \ref{uniq}, we will conclude that $v=u$ on $\overline{\Omega}_T$. Our proofs yield that an arbitrary subsequence of $(u_{\eps_j})$ has a uniformly convergent subsequence. Hence, by uniqueness of $u$, the sequence $(u_{\eps_j})$ itself converges uniformly to $u$. 

To show that the sequence $(u_{\eps_j})$ satisfies the conditions of Lemma \ref{Ascoli}, we first need the following technical lemma, in which the function $p(x,t)$ does not cause extra difficulties compared to the case where $p>2$ is a constant. The method  for proof has been used before for different games, see \cite[Lemma 4.9]{manfredipr10} and \cite[Lemma 4.5]{manfredipr12}.

\begin{lemma}
For arbitrary $\eta>0$, there are $r_0>0$ and $\eps_1>0$ such that when $(y,t)\in \partial_p \Omega_T$, $(x,s)\in \Omega_T$, $\eps<\eps_1$ and $|y-x|+|t-s|<r_0$, we have 
\[
|u_\eps(y,t)-u_\eps(x,s)|<\eta.
\]
\end{lemma}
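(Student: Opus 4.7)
The plan is to prove boundary continuity of $u_\eps$ by a barrier argument combined with a fixed strategy, following the scheme of \cite[Lemma 4.9]{manfredipr10} and \cite[Lemma 4.5]{manfredipr12} but taking into account the $(x,t)$-dependence of $\alpha,\beta$. Since $(y,t)\in\Gamma_T$, we have $u_\eps(y,t)=F(y,t)$, so the statement amounts to proving that $u_\eps(x,s)\to F(y,t)$ as $(x,s)\to(y,t)$, uniformly for small $\eps$. First I would use uniform continuity of $F$ on $\overline{\Gamma^\eps_T}$: fix $\delta_1>0$ so that $|F(w,\tau)-F(y,t)|<\eta/3$ whenever $(w,\tau)\in \Gamma^\eps_T$ satisfies $|w-y|+|\tau-t|<\delta_1$. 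It then suffices to show that, with high probability, the game starting from $(x,s)$ exits $\Omega_T$ inside this parabolic neighborhood of $(y,t)$.

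To produce such an estimate I would build a nonnegative comparison function $\phi$ on $\Omega_T$ with $\phi(x,s)\to 0$ as $(x,s)\to(y,t)$, yet $\phi\geq c_0>0$ on the portion of $\Gamma^\eps_T$ at distance $\geq \delta_1$ from $(y,t)$, and such that $\phi$ is a DPP-supersolution for Player II when she plays the strategy ``pull toward an exterior point''. Concretely, if $y\in\partial\Omega$ I would invoke the exterior sphere condition to pick $z\in\R^n\setminus\Omega$ with $B_\rho(z)\cap\Omega=\emptyset$ and $|y-z|=\rho$, and take
\[
\phi(w,\tau)=C_1\bigl(\rho^{-\gamma}-|w-z|^{-\gamma}\bigr)+C_2(t-\tau),
\]
with $\gamma=\gamma(n,\inf p)>0$ chosen so that a Taylor expansion of $w\mapsto |w-z|^{-\gamma}$ around the current game position $x_k$, combined with the explicit DPP weights $\alpha(x_k,t_k)/2$ (sup$+$inf) and $\beta(x_k,t_k)\fint$ with the sup replaced by the pull toward $z$, produces a strict supersolution inequality modulo an $o(\eps^2)$ error. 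The time term handles the bottom $\Omega\times\{0\}$ for free: $\phi_\tau=-C_2$, and $\phi$ is harmonic in $w$ on $B_\rho(z)^c$ after the spatial part, so the time term dominates any error for $C_2$ large; it also makes $\phi$ large on the part of $\Omega\times\{0\}$ that is spatially far from $y$.

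With this barrier I would fix Player II's strategy $S^0_{\mathrm{II}}$ to always pull toward $z$, let $\tau^*$ be the exit time from $\Omega_T$, and verify that the process $\phi(x_k,t_k)$ is a supermartingale modulo an $o(1)$ error; by optional stopping and Lemma \ref{fix.strategia},
\[
\sup_{S_{\mathrm I}}\E^{(x,s)}_{S_{\mathrm I},S^0_{\mathrm{II}}}\bigl[\phi(x_{\tau^*},t_{\tau^*})\bigr]\leq \phi(x,s)+o(1).
\]
Splitting the expectation according to whether $(x_{\tau^*},t_{\tau^*})$ lies within the $\delta_1$-neighborhood of $(y,t)$ or not, this forces
\[
\P\bigl(|x_{\tau^*}-y|+|t_{\tau^*}-t|\geq \delta_1\bigr)\leq \phi(x,s)/c_0+o(1),
\]
and then the payoff bound $|F|\leq M$ together with the choice of $\delta_1$ gives
\[
u_\eps(x,s)\leq F(y,t)+\tfrac{\eta}{3}+2M\bigl(\phi(x,s)/c_0+o(1)\bigr)\leq F(y,t)+\eta
\]
once $r_0$ and $\eps_1$ are small enough. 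The symmetric argument fixing a ``pull toward $z$'' strategy for Player I gives the reverse inequality and completes the proof.

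The main obstacle is verifying the discrete supersolution property of $\phi$ with the correct $\eps^2$-error: one must expand $\phi$ to second order in the $\alpha/2(\sup+\inf)$ term (where the pull-toward-$z$ choice provides the crucial first-order cancellation and a favorable second-order term scaling like $-\gamma(\gamma+1)|w-z|^{-\gamma-2}\eps^2$) and in the $\beta\fint$ term (which gives $\simeq\gamma(\gamma-n)|w-z|^{-\gamma-2}\eps^2/(n+2)$), then choose $\gamma$ large enough, depending only on $n$ and $\inf p$, so the combined coefficient has the correct sign uniformly in $(x,t)\in\Omega_T$; the time term absorbs the residual $o(\eps^2)$ and covers the initial boundary. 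Since the estimate is local and $\phi$ depends only on the local geometry near $y$, the spatial nonconstancy of $p$ enters only through the pointwise weights $\alpha(x_k,t_k),\beta(x_k,t_k)$, both of which remain within a fixed compact subinterval of $(0,1)$ by the assumption $\inf p>2$.
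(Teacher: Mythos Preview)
Your framework---exterior sphere, fix a pull-toward-$z$ strategy, build a barrier that is a supermartingale along the game, then optional stopping to control the exit location---is the same as the paper's. There is, however, a sign confusion in your barrier that would make the verification fail as written. Since the game steps backward in time, the term $C_2(t-\tau)$ \emph{increases} by $C_2\eps^2/2$ at every step; this positive drift works against the supermartingale inequality, not for it. So ``the time term dominates any error for $C_2$ large'' is backwards: the negative spatial drift must absorb the positive time drift, which forces $C_1$ large relative to $C_2$ (with constants depending on $\diam\Omega$, because the spatial second-order gain scales like $|w-z|^{-\gamma-2}$). The remark that the spatial part is ``harmonic'' is also inconsistent with your own later expansion unless $\gamma=n-2$. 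Your barrier can be salvaged once these constraints are sorted out, but the heuristic you give for choosing $C_2$ points in the wrong direction, and as stated the supermartingale step would not go through.

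The paper sidesteps this by decoupling space and time. It first handles the initial boundary via the already-proved time-H\"older estimate (Theorem~\ref{time-Holder}) and reduces the lateral case to $t=s$. There it uses a \emph{linear} barrier: $M_k=|x_k-z|-C\eps^2 k$ is a supermartingale under pull-toward-$z$ for any weights $\alpha(x_k,t_k),\beta(x_k,t_k)$, since the first-order terms cancel exactly and the integral average contributes a uniform $O(\eps^2)$. Then $\E[\tau]$ is bounded \emph{separately} by comparison with a radial $v$ solving $\Delta v=-2(n+2)$ on an annulus, for which $v(x_k)+k\eps^2$ is a supermartingale by concavity. Jensen combines the two into
\[
\E\big[|x_\tau-z|+|t_\tau-t_0|^{1/2}\big]\le |x_0-z|+C(R/\delta)\,\dist(x_0,\partial B_\delta(z))^{1/2}+o(1),
\]
and uniform continuity of $F$ finishes. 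No power $\gamma$ to tune, and the time control comes from the stopping-time bound rather than from a time term in the barrier.
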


\begin{proof}
If $(y,t)$ is on the bottom of the cylinder $\Omega_T$, the result follows from Theorem \ref{time-Holder}. Assume next that $y\in \partial \Omega$. It is enough to verify the case $t=s=:t_0$, since otherwise triangle inequality gives
\[
|u_{\eps_j}(x,t)-u_{\eps_j}(y,s)|\leq |u_{\eps_j}(x,t)-u_{\eps_j}(y,t)|+|u_{\eps_j}(y,t)-u_{\eps_j}(y,s)|,
\]
and the last term can be estimated by using uniform continuity of the boundary data. 

Since $\Omega$ satisfies the exterior sphere condition, we have $y\in \partial B_\delta(z)$ for some $B_\delta(z)\subset \R^n\setminus \Omega$. Let us start the game from $(x,t)=:(x_0,t_0)$ and fix for Player I a strategy $S^0_\text{I}$ of pulling towards $z$. Player II uses a strategy $S_\text{II}$. Then,
\begin{align*}
\mathbb{E}& ^{(x_0,t_0)}_{S^0_\text{I},S_\text{II}}[|x_k-z||x_0,...,x_{k-1}]\\
& \leq \frac{\alpha(x_{k-1},t_{k-1})}{2}\left(|x_{k-1}-z|+\eps+|x_{k-1}-z|-\eps\right)\\
& +\beta(x_{k-1},t_{k-1})\vint_{B_\eps(x_{k-1})}|x-z|dx\\
&\leq |x_{k-1}-z|+C\eps^2,
\end{align*}
where $C$ does not depend on $\eps$. Therefore, $M_k=|x_k-z|-C\eps^2k$ is a supermartingale. Jensen's inequality gives
\[
\mathbb{E}^{(x_0,t_0)}_{S^0_\text{I},S_\text{II}}[|x_\tau-z|+|t_\tau-t_0|^{\frac12}]\leq |x_0-z|+C\eps\left(\mathbb{E}^{(x_0,t_0)}_{S^0_\text{I},S_\text{II}}[\tau]\right)^{\frac12}.
\]
Suppose that for the stopping time $\tau$ we have the estimate
\begin{equation}\label{stopping}
\mathbb{E}^{(x_0,t_0)}_{S^0_\text{I},S_\text{II}}[\tau]\leq \frac{C(R/\delta)\dist(\partial B_\delta (z),x_0)+o(1)}{\eps^2},
\end{equation}
where $R>0$ is chosen so that $\Omega\subset B_R(z)$, and $o(1)\rightarrow 0$ when $\eps\rightarrow 0$. Then we have
\[
\mathbb{E}^{(x_0,t_0)}_{S^0_\text{I},S_\text{II}}[|x_\tau-z|+|t_\tau-t_0|^{\frac12}]\leq |x_0-z|+C(R/\delta)|x_0-y|+o(1),
\]
and the proof is complete by uniform continuity of the boundary function $F$.

It remains to justify estimate \eqref{stopping}. In $\Omega$, let $v$ be a solution to the problem
\begin{displaymath}
\left\{ \begin{array}{ll}
\Delta v=-2(n+2) & \textrm{in}\ B_{R+\eps}\setminus \overline{B}_{r}(z),\\
v=0 & \text{on}\ \partial B_{r}(z),\\
\frac{\partial v}{\partial \nu}=0 & \text{on}\ \partial B_{R+\eps}(z), 
\end{array} \right.
\end{displaymath}
where $\frac{\partial v}{\partial \nu}$ is the normal derivative. The function $v$ satisfies
\begin{equation}\label{poisson}
v(x)=\vint_{B_\epsilon(x)}v\ dy+\eps^2,
\end{equation}
and it can be extended as a solution to the same equation in $\overline B_{r(z)}\setminus \overline B_{{r-\eps}(z)}$ so that equation \eqref{poisson} holds also near the boundary $\partial B_r(z)$. 

By concavity of $v$, it follows from \eqref{poisson} that $(v(x_k)+k\eps^2)$ is a supermartingale. Define a new stopping time $\tau^*$,
\[
\tau^*=\inf\{k\ :\ x_k\in \overline{B}_\delta (z)\}.
\] 
Since 
\[
v(x_0)\leq C(R/\delta)\dist(\partial B_\delta (z),x_0),
\]
we have 
\[
\mathbb{E}^{x_0}[\tau^*]\leq \frac{v(x_0)-\mathbb{E}[v(x_{\tau^*})]}{\eps^2}\leq \frac{C(R/\delta)\dist(\partial B_\delta (z),x_0)+o(1)}{\eps^2}.
\]
Since the function $v$ is concave in $r=|x-z|$ and $\tau\leq \tau^*$, we obtain estimate \eqref{stopping}, and the proof is complete. 
\end{proof}

\begin{lemma}
The sequence $(u_\eps)$ of value functions satisfies the conditions of Lemma \ref{Ascoli}.
\end{lemma}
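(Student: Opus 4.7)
The plan is to verify the equicontinuity hypothesis of Lemma \ref{Ascoli}; uniform boundedness is immediate since $|u_\eps|\le \|F\|_\infty=:M$ by the definition of $u_\eps$ as a supremum--infimum expectation of the bounded payoff. For equicontinuity, fix $\eta>0$. The key idea is to split the analysis into two regimes according to the parabolic distance of the points to $\partial_p\Omega_T$, using Theorem \ref{Holder} when both points lie well inside $\Omega_T$, and the previous (boundary) lemma when at least one of them is close to $\partial_p\Omega_T$.

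First I would apply the previous lemma to choose $r_1>0$ and $\eps_1>0$ so that whenever $(z,\sigma)\in\partial_p\Omega_T$ and $(w,\tau)\in\overline{\Omega}_T$ satisfy $|z-w|+|\sigma-\tau|<2r_1$, one has $|u_\eps(w,\tau)-u_\eps(z,\sigma)|<\eta/3$ for every $\eps<\eps_1$. Next, set a radius $r:=r_1$ (possibly shrunk once more below) and consider $(x,t),(y,s)\in\overline{\Omega}_T$ both lying at parabolic distance at least $2r$ from $\partial_p\Omega_T$, i.e.\ $\dist(x,\partial\Omega)\ge 2r$, $\dist(y,\partial\Omega)\ge 2r$ and $t,s\ge 2r^2$. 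For such points the geometric hypothesis $B_{2r}(\cdot)\times[\cdot-2r^2,\cdot]\subset\Omega_T$ of Theorem \ref{Holder} holds, giving
\[
|u_\eps(x,t)-u_\eps(y,s)|\le C(n)\frac{|x-y|^\delta+|t-s|^{\delta/2}}{r^\delta}M+C'(n)\frac{\eps^\delta}{r^\delta}M.
\]
Now choose $r_0\in(0,r_1)$ so small that $|x-y|+|t-s|<r_0$ forces the first term to be below $\eta/3$ (using $|t-s|^{\delta/2}\le r_0^{\delta/2}$ and $|x-y|^\delta\le r_0^\delta$), and then shrink $\eps_0\le\eps_1$ so that $C'(n)M\eps_0^\delta/r^\delta<\eta/3$. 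This handles the interior case.

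For the remaining case, assume $(x,t)$ is within parabolic distance $2r_0$ of $\partial_p\Omega_T$ (the symmetric case is analogous). Then whenever $|x-y|+|t-s|<r_0$, the point $(y,s)$ is within parabolic distance $3r_0<2r_1$ of $\partial_p\Omega_T$ as well. Picking a point $(z,\sigma)\in\partial_p\Omega_T$ realizing the distance from $(x,t)$ and applying the boundary estimate from the first step twice gives
\[
|u_\eps(x,t)-u_\eps(y,s)|\le |u_\eps(x,t)-u_\eps(z,\sigma)|+|u_\eps(y,s)-u_\eps(z,\sigma)|<\tfrac{\eta}{3}+\tfrac{\eta}{3}<\eta,
\]
which finishes the verification for every $\eps<\eps_0$.

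The main obstacle is the compatibility of the two regimes: Theorem \ref{Holder} deteriorates near $\partial_p\Omega_T$ because the constant scales as $r^{-\delta}$, so $r$ has to be chosen large enough for the interior estimate to be useful, while the boundary estimate must already control the oscillation on a scale comparable with this $r$. Choosing $r=r_1$ after the boundary lemma has fixed the corresponding $r_1$ avoids circularity, and then shrinking the displacement threshold $r_0$ and the parameter $\eps_0$ absorbs both the Hölder factor and the $\eps^\delta$ error term. No translation invariance is needed, only the local Hölder estimate of Theorem \ref{Holder} and the boundary regularity provided by the preceding lemma via the exterior sphere condition.
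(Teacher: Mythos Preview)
Your argument is correct and follows essentially the same approach as the paper: uniform boundedness from the bounded payoff, the previous boundary lemma combined with a triangle-inequality step near $\partial_p\Omega_T$, and Theorem \ref{Holder} in the interior. Your version is in fact more carefully organized---you spell out explicitly how $r$ is fixed first from the boundary lemma and only then $r_0$ and $\eps_0$ are shrunk, avoiding the circularity you flag---whereas the paper's proof proceeds in the same order but more telegraphically.
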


\begin{proof} 
Since $u_\eps\leq \max F$, the sequence $(u_\eps)$ is uniformly bounded. For asymptotic uniform continuity, fix $\eta$. Since $u$ is uniformly continuous in $\Omega_T\times \Gamma_\eps$, there is $r_1>0$ such that $(x,t), (y,s)\in \Omega_T\times \Gamma_\eps$, 
\[
\left|x-y\right|+|t-s|<r_1,
\]   
implies
\[
\left|u(x,t)-u(y,s)\right|<\eta/2.
\]
When $x,y\in \partial B_R(0)$, the same estimate holds between $u_\eps(x)$ and $u_\epsilon(y)$ for all $0<\eps<R$, since $u_\eps=u$ on $\Gamma_\eps$.

When $y\in \Gamma_\eps$ and $x\in \Omega_T$, by the previous lemma there are $r_0>0$ and $\eps_1>0$ such that when $|y-x|+|t-s|<r_0$, we have 
\[
|u_\eps(y,t)-u_\eps(x,s)|<\eta/2.
\]
If $x,y\in \Omega_T$ and $\dist(\{x,y\},\Gamma_\eps)<r_0/2$, then by using the triangle inequality with a boundary point, we obtain $|u_\eps(y)-u_\eps(x)|<\eta$. 

Finally, assume that $\dist(\{x,y\},\Gamma_\eps)\geq r_0/2$. By local H\"older continuity there is $\eps_2>0$ such that when $\eps<\eps_1$, we have 
\[
|u_\eps(y,t)-u_\eps(x,s)|<\eta. 
\]
The proof is complete by taking $\eps_0=\min(\eps_1,\eps_2)$.
\end{proof}
We have shown that the sequence $(u_\eps)$ converges uniformly towards a uniformly continuous limit function $v$, and next we show that the function is a viscosity solution to the normalized parabolic $p(x,t)$-equation.

Below we denote by
$\lambda_{\textrm{max}}((p(x,t)-2)D^2\phi(x,t))$, and
$\lambda_{\textrm{min}}((p(x,t)-2)D^2\phi(x,t))$ the largest, and the
smallest of the eigenvalues to the symmetric matrix
$(p(x,t)-2)D^2\phi(x,t)\in \R^{n\times n}$ for a smooth test function.
%$\phi$. We write $\lambda_{\textrm{max}}((p(x)-2)D^2\phi(x,t))$ instead of $(p(x)-2)\lambda_{\textrm{max}}(D^2\phi(x,t))$  to give a unified treatment for the cases $p\geq 2$ and $1<p<2$.
\begin{definition}
\label{def:viscosity-solution}
A function $u : \Om_T\to \R$ is a viscosity solution to
\[
\begin{split}
(n+p(x,t))u_t=\Delta u+(p(x,t)-2) \Delta_{\infty}^N u,
\end{split}
\]
if $u$ is continuous and whenever $(x_0,
t_0)\in \Om_T$ and $\phi \in C^2(\Om_T)$ is such that
\begin{enumerate}
\item[i)]  $u(x_0, t_0) = \phi(x_0, t_0)$,
\item[ii)] $\phi(x, t) > u(x, t)$ for $(x, t) \in \Om_T,\ (x,t)\neq (x_0,t_0)$,
\end{enumerate}
then we have at the point $(x_0, t_0)$
\[
\begin{split}
\begin{cases}
(n+p(x,t))\phi_t
\leq (p(x,t)-2)\Delta^N_\infty \phi  +\Delta \phi  ,\quad  &\textrm{if}\quad \nabla \phi(x_0,t_0)\neq 0,\\
(n+p(x,t))\phi_t   \leq  \lambda_\textrm{max}((p(x,t)-2)D^2\phi )+\Delta \phi
,\quad  &\textrm{if}\quad  \nabla \phi(x_0,t_0)  =0.
\end{cases}
\end{split}
\]
Moreover, we require that when touching $u$ with a test function
from below all the inequalities are reversed and
$\lambda_\textrm{max}((p(x,t)-2)D^2\phi)$ is replaced by
$\lambda_{\textrm{min}}((p(x,t)-2)D^2\phi)$.
\end{definition}

\begin{lemma}\label{raja}
The limit function $v$ is a viscosity solution to 
\[
\begin{split}
(n+p(x,t))u_t=\Delta u+(p(x,t)-2) \Delta_{\infty}^N u,
\end{split}
\]
with boundary data $F$.
\end{lemma}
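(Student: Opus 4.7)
The plan is to verify two things: $v=F$ on $\partial_p \Omega_T$, and $v$ satisfies the viscosity inequalities inside $\Omega_T$. The boundary data follows immediately from the preceding boundary-continuity lemma combined with $u_{\eps_j}=F$ on $\Gamma^{\eps_j}_T$ and the uniform convergence $u_{\eps_j}\to v$: given $(y,t)\in \partial_p\Omega_T$ and $(x,s)\to(y,t)$ with $\eps_j\to 0$, we have $u_{\eps_j}(x,s)\to F(y,t)$, hence $v(y,t)=F(y,t)$.

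For the interior inequality, let $(x_0,t_0)\in \Omega_T$ and let $\phi\in C^2(\Omega_T)$ strictly touch $v$ from above at $(x_0,t_0)$. By strict local maximality together with uniform convergence, I select near-maximizers $(x_j,t_j)\to (x_0,t_0)$ of $u_{\eps_j}-\phi$, and arrange the deficit $\eta_j:=\sup(u_{\eps_j}-\phi)-(u_{\eps_j}-\phi)(x_j,t_j)$ to be $o(\eps_j^2)$ by a standard perturbation (e.g.\ adding $|x-x_0|^4+(t-t_0)^2$ to $\phi$ and taking an exact maximizer on a shrinking neighborhood). The inequality $u_{\eps_j}\leq \phi+(u_{\eps_j}-\phi)(x_j,t_j)+\eta_j$ combined with the DPP at $(x_j,t_j)$ then yields
\[
\phi(x_j,t_j)\leq \frac{\alpha(x_j,t_j)}{2}\Bigl(\sup_{B_{\eps_j}(x_j)}\phi(\cdot,t_j-\tfrac{\eps_j^2}{2})+\inf_{B_{\eps_j}(x_j)}\phi(\cdot,t_j-\tfrac{\eps_j^2}{2})\Bigr)+\beta(x_j,t_j)\vint_{B_{\eps_j}(x_j)}\phi(\cdot,t_j-\tfrac{\eps_j^2}{2})\,\ud y+\eta_j.
\]
A Taylor expansion in space and time around $(x_j,t_j)$ gives $\vint \phi = \phi - \tfrac{\eps_j^2}{2}\phi_t + \tfrac{\eps_j^2}{2(n+2)}\Delta\phi + o(\eps_j^2)$, and when $\nabla\phi(x_0,t_0)\neq 0$ the sup and inf are attained at $x_j\pm \eps_j \nabla\phi/|\nabla\phi|+o(\eps_j)$, yielding $\tfrac{1}{2}(\sup+\inf)\phi = \phi - \tfrac{\eps_j^2}{2}\phi_t + \tfrac{\eps_j^2}{2}\Delta_{\infty}^N\phi+o(\eps_j^2)$. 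Substituting, subtracting $\phi(x_j,t_j)$, dividing by $\eps_j^2/2$, and passing to the limit (using continuity of $p$ so that $\alpha(x_j,t_j),\beta(x_j,t_j)$ converge to their values at $(x_0,t_0)$, and recalling $\alpha=\tfrac{p-2}{n+p}$, $\beta=\tfrac{n+2}{n+p}$), after multiplication by $n+p(x_0,t_0)$ I obtain
\[
(n+p(x_0,t_0))\phi_t\leq (p(x_0,t_0)-2)\Delta_{\infty}^N\phi+\Delta\phi\quad\text{at }(x_0,t_0),
\]
the required subsolution inequality.

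The main obstacle is the degenerate case $\nabla\phi(x_0,t_0)=0$, since the first-order terms $\pm \eps|\nabla\phi|$ no longer dominate the sup--inf expansion and the leading quadratic term depends on the (possibly oscillating) directions at which the sup and inf are attained along the subsequence. This is resolved by the crude eigenvalue bound $\langle D^2\phi h,h\rangle \leq |h|^2 \lambda_{\mathrm{max}}(D^2\phi)$ applied at both extremizers, which controls the second-order contribution to $\tfrac{1}{2}(\sup+\inf)\phi$ by $\tfrac{\eps_j^2}{2}\lambda_{\mathrm{max}}(D^2\phi)+o(\eps_j^2)$; carrying this through the same computation, and using $p>2$, delivers the weaker inequality with $\lambda_{\mathrm{max}}((p-2)D^2\phi)$ required by Definition \ref{def:viscosity-solution}. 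The case of touching from below is symmetric, with all inequalities reversed and $\lambda_{\mathrm{max}}$ replaced by $\lambda_{\mathrm{min}}$.
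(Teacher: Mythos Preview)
Your overall architecture matches the paper's: pass the DPP to a $C^2$ test function at near-touching points $(x_j,t_j)\to(x_0,t_0)$, Taylor expand, divide by $\eps_j^2$, and let $j\to\infty$. The paper cites \cite[Theorem~2.4]{manfredipr10} for the expansion while you write it out, and the paper does the supersolution side while you do the subsolution side; these are cosmetic differences.

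The genuine divergence is in the degenerate case $\nabla\phi(x_0,t_0)=0$. The paper does \emph{not} argue via eigenvalue bounds; instead it invokes Lemma~\ref{lemma:reduced-test-functions}, which says that it suffices to test with $\phi$ satisfying additionally $D^2\phi(x_0,t_0)=0$. Under that extra hypothesis the entire second-order part of the expansion is $o(\eps_j^2)$ and one only needs $\phi_t\le 0$, which is immediate. This sidesteps all the delicate issues about where the sup and inf are attained.

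Your direct eigenvalue route can be made to work, but as written there is a gap: you control only the \emph{second-order} contribution by $\tfrac{\eps_j^2}{2}\lambda_{\max}(D^2\phi)$ and say nothing about the first-order term $\nabla\phi(x_j,t_j)\cdot h$. Since $(x_j,t_j)\to(x_0,t_0)$ at an uncontrolled rate, $|\nabla\phi(x_j,t_j)|$ is only $o(1)$, so this term is a~priori $o(\eps_j)$, not $o(\eps_j^2)$, and survives the division by $\eps_j^2$. The fix is to bound the $\inf$ not at the true minimizer but at the specific point $h=-\eps_j\nabla\phi(x_j,t_j)/|\nabla\phi(x_j,t_j)|$, so that $\pm\eps_j|\nabla\phi(x_j,t_j)|$ cancel in $\tfrac12(\sup+\inf)$; once you say this, your argument goes through. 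The paper's reduced-test-function device avoids this bookkeeping altogether and is the cleaner option here.
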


\begin{proof}
We only show that the function $v$ is a viscosity supersolution. (Showing that $v$ is a subsolution is similar.) Choose $(x,t)\in Q_R$ and $\vp\in C^2$ touching $v$ from below at $(x,t)$. We need to show that 
\begin{equation}\label{visc}
\frac{\beta(x,t)}{2(n+2)}\left((p(x,t)-2)\Delta^N_\infty \vp(x,t)+\Delta \vp(x,t)-(n+p(x,t))\vp_t(x,t)\right)\leq 0.
\end{equation}
As a direct consequence of \cite[Theorem 2.4]{manfredipr10}, we have
\begin{align*}
&\frac{\alpha(x,t)}{2}\left\{\sup_{B_\eps (x)}\vp(y,t-\frac{\eps^2}{2})+\inf_{B_\eps (x)}\vp(y,t-\frac{\eps^2}{2})\right\}\\
&\phantom{{}=\sup_{B_\eps (x)}}+\beta(x,t) \vint_{B_\eps (x)}\vp(y,t-\frac{\eps^2}{2})dy-\vp(x,t)\notag\\
& \geq \frac{\beta(x,t) \eps^2}{2(n+2)}\bigg((p(x,t)-2)\left\langle D^2 \vp(x,t)\left(\frac{\overline{x}^\eps-x}{\left|\overline{x}^\eps-x\right|}\right),\left(\frac{\overline{x}^\eps-x}{\left|\overline{x}^\eps-x\right|}\right)\right\rangle\\
&\phantom{{}=\frac{\beta(x,t) \eps^2}{2(n+2)}}+\Delta\vp(x,t)-(n+p(x,t))\vp_t(x,t)\bigg)+o(\eps^2),
\end{align*}
where $\overline{x}^\eps\in B_\eps(x)$ is nearly to the direction of $\nabla \vp(x)$. 

By the uniform convergence, there is a sequence $(x_\eps,t_\eps)\rightarrow (x,t)$ such that when $(y,s)$ is near $(x_\eps,t_\eps)$, we have
\[
u_\eps(y,s)-\vp(y,s)\geq u_\eps(x_\eps,t_\eps)-\vp(x_\eps,t_\eps)-\eta_\eps.
\]
Setting $\widetilde{\vp}=\vp+u_\eps(x_\eps,t_\eps)-\vp(x_\eps,t_\eps)$ we have
\[
u_\eps(x_\eps,t_\eps)=\widetilde{\vp}(x_\eps,t_\eps),\ u_\eps(y,s)\geq \widetilde{\vp}(y,s)-\eta_\eps.
\]
We get
\begin{align*}
\eta_\eps &\geq \frac{\alpha(x,t_\eps)}{2}\left\{\sup_{B_\eps (x)}\widetilde{\vp}(y,t_\eps-\frac{\eps^2}{2})+\inf_{B_\eps (x)}\widetilde{\vp}(y,t_\eps-\frac{\eps^2}{2})\right\}\\
&\phantom{{}=\sup_{B_\eps (x)}}+\beta(x,t_\eps) \vint_{B_\eps (x)}\widetilde{\vp}(y,t_\eps-\frac{\eps^2}{2})dy-\widetilde{\vp}(x_\eps,t_\eps)\notag
\end{align*}
Let us first assume that $\nabla \vp(x,t)\neq 0$. Then, since we can choose $\eta_\eps=o(\eps^2)$, we obtain
\begin{align*}
0 &\geq \frac{\beta(x_\eps,t_\eps) \eps^2}{2(n+2)}\bigg((p(x_\eps,t_\eps)-2)\left\langle D^2 \vp(x_\eps,t_\eps)\left(\frac{\overline{x}^\eps-x_\eps}{\left|\overline{x}^\eps-x_\eps\right|}\right),\left(\frac{\overline{x}^\eps-x_\eps}{\left|\overline{x}^\eps-x_\eps\right|}\right)\right\rangle\\
&\phantom{{}=\frac{\beta(x) \eps^2}{2(n+2)}}+\Delta\vp(x_\eps,t_\eps)-(n+p(x_\eps,t_\eps))\vp_t(x_\eps,t_\eps)\bigg)+o(\eps^2).
\end{align*}
When $\eps\rightarrow 0$, it follows that
\[
\frac{\beta(x,t)}{2(n+2)}\left((p(x,t)-2)\Delta^N_\infty \vp(x)+\Delta \vp(x)-(n+p(x,t))\vp_t(x,t)\right)\leq 0.
\]

When $\nabla \vp(x,t)=0$, also $D^2\vp(x,t)=0$ (see Lemma \ref{lemma:reduced-test-functions} below), and it is easy to verify the required inequality $\vp_t(x,t)\geq 0$.
\end{proof}

\section{Uniqueness for $p(x,t)$-equation}\label{uniq}

In this section we assume that the function $p$ is Lipschitz continuous in $\Omega_T$ with Lipschitz constant $C_1$. We prove that there is a unique viscosity solution to
\begin{equation}
\label{eq:equation}
\begin{split}
(n+p(x,t))u_t=\Delta_{p(x,t)}^N u
\end{split}
\end{equation}
with classical Dirichlet boundary conditions. Existence is well known, and in fact the previous section provided a game-theoretic proof. 

The technique for uniqueness is well known; $p(x,t)$ causes slight modifications. For the convenience of the reader, we give the details. For additional literature, see \cite{juutinen14, imberts13, banerjeeg13, banerjeeg15a}. 

The parabolic equation \eqref{eq:equation} is discontinuous when the gradient vanishes. We recall the definition of viscosity solution
based on semicontinuous extensions of the operator, and refer the
reader to  Chen-Giga-Goto~\cite{chengg91},
Evans-Spruck~\cite{evanss91}, and Giga's monograph \cite{giga06}.

The next lemma allows us reduce the test functions in the case $\nabla \phi(x_0,t_0)=0$ and only test by those having $D^2\phi(x_0,t_0)=0$.

\begin{lemma}
\label{lemma:reduced-test-functions}
A function $u : \Om_T\to \R$ is a viscosity solution to \eqref{eq:equation}
if $u$ is continuous and
whenever $(x_0, t_0)\in \Om_T$ and $\phi \in C^2(\Om_T)$ is such that
\begin{enumerate}
\item[i)]  $u(x_0, t_0) = \phi(x_0, t_0)$,
\item[ii)] $\phi(x, t) > u(x, t)$ for $(x, t) \in \Om_T,\ (x,t)\neq (x_0,t_0)$,
\end{enumerate}
then at the point $(x_0, t_0)$ we have
$$ \left\{\!\!\!
\begin{array}{rclcl}
(n+p(x,t))\phi_t \!\!
&\leq& \!\!(p(x,t)-2)\Delta^N_\infty \phi  +\Delta \phi,\!\!\!\!&\textrm{if}& \nabla
\phi(x_0,t_0)\neq 0,\notag \\[2pt]
\phi_t(x_0,t_0) \!\! &\leq &\!\! 0,\!\!\!\!&\textrm{if}& \nabla \phi (x_0,t_0)=0,\, \textrm{and} \,
D^2\phi(x_0,t_0)=0.\notag
\end{array} \right.
 $$
We also require that when testing from below all the inequalities are reversed.
\end{lemma}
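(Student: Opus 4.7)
One direction is immediate: if $u$ is a viscosity solution in the sense of Definition~\ref{def:viscosity-solution} and $\phi$ touches $u$ from above at $(x_0,t_0)$ with $\nabla\phi(x_0,t_0)=0$ and $D^2\phi(x_0,t_0)=0$, then the right-hand side of the full inequality collapses to $\lambda_{\max}(0)+0=0$, so $(n+p(x_0,t_0))\phi_t(x_0,t_0)\leq 0$, which yields $\phi_t(x_0,t_0)\leq 0$ since $n+p>0$. The case $\nabla\phi(x_0,t_0)\neq 0$ coincides in the two formulations, so the reduced property follows.

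For the converse direction, suppose $u$ satisfies the reduced property; I will show $u$ is a viscosity supersolution (the subsolution case being symmetric). Let $\phi$ touch $u$ from above at $(x_0,t_0)$. The case $\nabla\phi(x_0,t_0)\neq 0$ is exactly reduced Case~1. The only nontrivial situation is $\nabla\phi(x_0,t_0)=0$ with $D^2\phi(x_0,t_0)\neq 0$, where I must derive
\[
(n+p(x_0,t_0))\phi_t(x_0,t_0)\leq \lambda_{\max}\bigl((p(x_0,t_0)-2)D^2\phi(x_0,t_0)\bigr)+\Delta\phi(x_0,t_0).
\]

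The plan is a standard perturbation. After replacing $\phi$ by $\phi+|x-x_0|^4+(t-t_0)^2$, which preserves $\phi_t$, $\nabla\phi$ and $D^2\phi$ at $(x_0,t_0)$, I may assume $\phi$ touches $u$ strictly from above. Fix a unit vector $e\in\R^n$ and, for small $\eta>0$, set
\[
\phi_\eta(x,t):=\phi(x,t)-\eta(x-x_0)\cdot e.
\]
By strict touching, on a small compact neighborhood of $(x_0,t_0)$ the infimum of $\phi_\eta-u$ is attained at an interior point $(x_\eta,t_\eta)$, with $(x_\eta,t_\eta)\to(x_0,t_0)$ as $\eta\to 0$. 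Translating by the constant $C_\eta:=-(\phi_\eta-u)(x_\eta,t_\eta)$ gives a test function $\tilde\phi_\eta:=\phi_\eta+C_\eta$ that touches $u$ from above at $(x_\eta,t_\eta)$, with
\[
\nabla\tilde\phi_\eta(x_\eta,t_\eta)=\nabla\phi(x_\eta,t_\eta)-\eta e,\qquad (\tilde\phi_\eta)_t=\phi_t,\qquad D^2\tilde\phi_\eta=D^2\phi.
\]

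Assuming for the moment $\nabla\tilde\phi_\eta(x_\eta,t_\eta)\neq 0$, the reduced Case~1 applied to $\tilde\phi_\eta$ yields
\[
(n+p(x_\eta,t_\eta))\phi_t(x_\eta,t_\eta)\leq (p(x_\eta,t_\eta)-2)\langle D^2\phi(x_\eta,t_\eta)\hat v_\eta,\hat v_\eta\rangle+\Delta\phi(x_\eta,t_\eta),
\]
where $\hat v_\eta$ is the unit vector in the direction of $\nabla\tilde\phi_\eta(x_\eta,t_\eta)$. By compactness, along a subsequence $\hat v_\eta\to\hat v$ for some unit vector $\hat v$; passing to the limit using continuity of $p,\phi_t,D^2\phi,\Delta\phi$ and the elementary bound $\langle A\hat v,\hat v\rangle\leq\lambda_{\max}(A)$, the full inequality follows. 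The main obstacle is verifying that $\nabla\tilde\phi_\eta(x_\eta,t_\eta)\neq 0$ along a suitable sequence $\eta\to 0$, since a priori the cancellation $\nabla\phi(x_\eta,t_\eta)=\eta e$ could occur. I would address this with a genericity argument: the set of directions $e$ for which this cancellation persists along a subsequence is of measure zero on the unit sphere, so one can choose $e$ outside it; alternatively, a further small quadratic perturbation of the form $\eta^{3/2}|x-x_0|^2$ breaks the degeneracy without affecting the limiting inequality.
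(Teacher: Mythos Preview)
Your perturbation strategy is genuinely different from the paper's proof, which proceeds by contradiction via doubling of variables: one studies the maximum of
\[
w_j(x,t,y,s)=u(x,t)-\phi(y,s)-\Big(\tfrac{j^2}{4}|x-y|^4+\tfrac{j}{2}|t-s|^2\Big),
\]
splits into the cases $x_j=y_j$ (where the quartic penalty has vanishing gradient \emph{and} Hessian, so the reduced Case~2 applies) and $x_j\neq y_j$ (where the parabolic theorem of sums is invoked, and the Lipschitz continuity of $p$ is used to control the cross terms). Your direct approach would be more elementary and would need only continuity of $p$---if it worked.

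However, the gap you flag is fatal and is not repaired by either remedy you propose. Take $u\equiv 0$ and $\phi(x,t)=|x|^2$ at $(x_0,t_0)=(0,0)$. After your strictifying step, $\phi(x,t)=|x|^2+|x|^4+t^2$, and $\tilde\phi_\eta(x,t)=|x|^2+|x|^4+t^2-\eta\,x\cdot e$. Since $u\equiv 0$, the new touching point $(x_\eta,t_\eta)$ is precisely the global minimum of $\tilde\phi_\eta$, so $\nabla\tilde\phi_\eta(x_\eta,t_\eta)=0$ for \emph{every} unit vector $e$ and every $\eta>0$; the genericity claim is simply false. Adding $\eta^{3/2}|x-x_0|^2$, or any other smooth term, changes nothing: the touching point is still a minimum of the (smooth) test function, hence has zero gradient. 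The obstruction is structural: whenever $u$ is smooth with $\nabla u(x_0,t_0)=0$, any $C^1$ test function touching from above has a local minimum of $\tilde\phi_\eta-u$ at the touching point, forcing $\nabla\tilde\phi_\eta=\nabla u$ there; for $u\equiv 0$ this is identically zero. Thus your scheme can never invoke Case~1 for such $u$, and Case~2 is unavailable because $D^2\tilde\phi_\eta(x_\eta,t_\eta)\to D^2\phi(x_0,t_0)\neq 0$.

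This is exactly the difficulty that the doubling-of-variables technique is designed to overcome: by separating the evaluation points for $u$ and for $\phi$, the penalty gradient $j^2|x_j-y_j|^2(x_j-y_j)$ serves as a surrogate spatial gradient that is nonzero precisely when $x_j\neq y_j$, allowing Case~1 to fire. (A minor aside: you label the argument ``supersolution'' while testing from above and aiming for the $\lambda_{\max}$ inequality; in the paper's convention that is the subsolution test.)
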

\begin{proof}
The proof is by contradiction: We assume that
$u$ satisfies the conditions in the statement but still fails to be a
viscosity solution in the sense of
Definition~\ref{def:viscosity-solution}. If this is the case, we
must have  $\phi \in C^2(\Om_T)$, $(x_0,t_0)\in \Om_T$ and $\eta>0$ such
that
\begin{enumerate}
\item[i)]  $u(x_0, t_0) = \phi(x_0, t_0)$,
\item[ii)] $\phi(x, t) > u(x, t)$ for $(x, t) \in \Om_T,\ (x,t)\neq (x_0,t_0)$,
\end{enumerate}
for which $\nabla \phi(x_0,t_0)=0,\ D^2\phi(x_0,t_0)\neq 0$ and
\begin{align}
\label{eq:counter-proposition}
(n&+p(x_0,t_0))\phi_t(x_0,t_0)  -\eta\nonumber\\ 
&>  \lambda_\textrm{min}((p(x_0,t_0)-2)D^2\phi(x_0,t_0))+\Delta \phi(x_0,t_0),
\end{align}
or the analogous inequality when testing from below (in this case
the argument is symmetric and we omit it). Let
\begin{equation}\label{sakko}
w_j(x,t,y,s)=u(x,t)-\phi(y,s)-\Big(\frac{j^2}{4}\abs{x-y}^4+\frac{j}{2}\abs{t-s}^2\Big)
\end{equation}
and denote by $(x_j,t_j,y_j,s_j)$ the maximum point of $w_j$ in
$\ol \Om_T\times \ol \Om_T$. Since $(x_0,t_0)$ is a local maximum
for $u-\phi$, we may assume that
\[
\begin{split}
(x_j,t_j,y_j,s_j)\to (x_0,t_0,x_0,t_0)\quad \textrm{as} \quad j\to \infty,
\end{split}
\]
and $(x_j,t_j)\, ,(y_j,s_j)\in \Om_T$ for all large $j$, similarly to \cite{juutinenk06}. Since $(x_0,t_0)$ is a local maximum of $u-\phi$, it follows from \eqref{sakko} that 
\[
\frac{j^2}{4}\abs{x_j-y_j}^4\rightarrow 0\ \text{and}\ \frac{j}{2}\abs{t_j-s_j}^2\rightarrow 0,
\]
when $j\rightarrow \infty$. If not, there would be $\alpha>0$ and subsequences $(x_j),...(s_j)$ such that
\[
\frac{j^2}{4}\abs{x-y}^4+\frac{j}{2}\abs{t-s}^2>\alpha.
\] 
Let $U_\alpha$ be a neighborhood of $(x_0,t_0)$ where oscillation of $(u-\phi)$ is less than $\alpha$. Since the subsequences converge to $(x_0,t_0)$, we get a contradiction.

We consider two cases: either $x_j=y_j$ infinitely often or $x_j\neq y_j$ for all $j$ large enough. First, let  $x_j=y_j$, and denote
\[
\begin{split}
\vp(y,s)=\frac{j^2}{4}\abs{x_j-y}^4+\frac{j}{2}(t_j-s)^2.
\end{split}
\]
Then
\[
\begin{split}
\phi(y,s)+\vp(y,s)
\end{split}
\]
has a local minimum at $(y_j,s_j)$. Since the function $p$ is continuous, by \eqref{eq:counter-proposition} we have
\[
\begin{split}
(n+p(y_j,s_j))\phi_t(y_j,s_j)-\eta  >  \lambda_\textrm{min}((p(y_j,s_j)-2)D^2\phi(y_j,s_j))+\Delta \phi(y_j,s_j)
\end{split}
\]
for $j$ large enough. As $\phi_t(y_j,s_j)=\vp_t(y_j,s_j)$ and $-D^2
\phi(y_j,s_j)\leq D^2 \vp(y_j,s_j)$, we have by the previous
inequality
\begin{equation}
\label{eq:from-above}
\begin{split}
\eta &< (n+p(y_j,s_j))\vp_t(y_j,s_j)+\lambda_{\textrm{max}}((p(y_j,s_j)-2)D^2\vp(y_j,s_j))+\Delta\vp(y_j,s_j)\\
&=(n+p(x_j,s_j))j(t_j-s_j),
\end{split}
\end{equation}
where we also used the fact that $y_j=x_j$ and thus $D^2\vp(y_j,s_j) =0$.

Next denote
\[
\begin{split}
\psi(x,t)=\frac{j^2}{4}\abs{x-y_j}^4+\frac{j}{2}(t-s_j)^2.
\end{split}
\]
Similarly,
\[
\begin{split}
u(x,t)-\psi(x,t)
\end{split}
\]
has a local maximum at $(x_j,t_j)$, and thus since $D^2\psi(x_j,t_j)=0$, our assumptions imply 
\begin{equation}
\label{eq:from-below}
\begin{split}
0\geq (n+p(x_j,t_j))\psi_t(x_j,t_j)=(n+p(x_j,t_j))j(t_j-s_j),
\end{split}
\end{equation}
for $j$ large enough. This contradicts \eqref{eq:from-above}, because both $t_j$ and $s_j$ converge to $t_0$ and the function $p$ is continuous.

Next we consider the case $y_j\neq x_j$. For the following notation, we refer to \cite{crandallil92} and \cite{juutinenlm01}, \cite{juutinenlp10}. We also use the parabolic theorem of sums for $w_j$
 which implies that there exists symmetric matrices $X_j,Y_j$ such that 
 \[
\begin{split}
&\Big(j(t_j-s_j),\,j^2\abs{x_j-y_j}^2(x_j-y_j),\,X_j\Big)\in  \ol{\mathcal{P}}^{2,+}u(x_j,t_j),\\
&\Big(j(t_j-s_j),\,j^2\abs{x_j-y_j}^2(x_j-y_j),\,Y_j\Big)\in \ol{\mathcal{P}}^{2,-}\phi(y_j,s_j),
\end{split}
\]
and 
\[
\begin{split}
\begin{pmatrix}
X_j&0\\
0&-Y_j 
\end{pmatrix}
\le D^2\Psi_j(x_j,y_j)+\frac1j [D^2\Psi_j(x_j,y_j)]^2
\end{split}
\]
with $\Psi_j(x_j,y_j)=\frac{j^2}{4}\abs{x_j-y_j}^4$.
Here
\[
\begin{split}
D^2\Psi_j(x_j,y_j)=\begin{pmatrix}
M&-M\\
-M&M
\end{pmatrix},
\end{split}
\]
where $M=j^2\abs{x_j-y_j}^2\Big( 2\frac{x_j-y_j}{\abs{x_j-y_j}}\otimes \frac{x_j-y_j}{\abs{x_j-y_j}}+I\Big)$, and
\[
\begin{split}
[D^2\Psi_j(x_j,y_j)]^2=2 \begin{pmatrix}
M^2&-M^2\\
-M^2&M^2 
\end{pmatrix}.
\end{split}
\]
Let $\xi:=\frac{x_j-y_j}{\abs{x_j-y_j}}$ and use $(\sqrt{p(x_j,t_j)-1}\, \xi,\sqrt{p(y_j,s_j)-1}\, \xi)$. The above implies
\begin{align*}
& (p(x_j,t_j)-1) \xi' X_j\cdot \xi-(p(y_j,s_j)-1)\xi' Y_j\cdot \xi\\
&\le C\left(p(x_j,t_j)-p(y_j,s_j)\right)^2\Big(\xi' M\xi+\frac{2}{j} \xi' M^2 \xi\Big),
\end{align*}
where we used an estimate
\[
\left(\sqrt{p(x_j,t_j)-1}-\sqrt{p(y_j,s_j)-1}\right)^2\leq (p(x_j,t_j)-p(y_j,s_j))^2,
\] 
which holds since the function $p$ is greater than 2. 

We have
\[
\begin{split}
\eta &<-(n+p(x_j,t_j))j(t_j-s_j)+(n+p(y_j,s_j))j(t_j-s_j)\\
&\hspace{1 em}+(p(x_j,t_j)-2)\langle X_j \xi,\, \xi\rangle+\tr(X_j)-(p(y_j,s_j)-2)\langle Y_j  \xi,\,\xi\rangle-\tr(Y_j).
\end{split}
\] 
Since the function $p$ is Lipschitz continuous, we have 
\[
\begin{split}
&|-(n+p(x_j,t_j))j(t_j-s_j)+(n+p(y_j,s_j))j(t_j-s_j)|\\
&\hspace{1 em}=|j(t_j-s_j)(p(y_j,s_j)-p(x_j,t_j))|\\
&\hspace{1 em}<C_1j|t_j-s_j|(|x_j-y_j|^2+|t_j-s_j|^2)^{\frac12}\\
&\hspace{1 em}\leq C_1j|t_j-s_j|\sqrt2(|x_j-y_j|+|t_j-s_j|)\\
&\hspace{1 em}=\sqrt2C_1\left((j|t_j-s_j|^2)^{\frac12}(j^2|x_j-y_j|^4)^{\frac14}+j|t_j-s_j|^2\right)\\
&\hspace{1 em}<\frac\eta2
\end{split}
\]
when $j$ is large enough. Hence, we get
\[
\begin{split}
\frac{\eta}{2} &<(p(x_j,t_j)-2)\langle X_j \xi,\, \xi\rangle+\tr(X_j)-(p(y_j,s_j)-2)\langle Y_j  \xi,\,\xi\rangle-\tr(Y_j)\\
&\leq \langle (X_j-Y_j)\xi,\xi\rangle +(p(x_j,t_j)-2)\langle X_j \xi,\, \xi\rangle-(p(y_j,s_j)-2)\langle Y_j  \xi,\,\xi\rangle\\
&\le \langle X_j (\sqrt{p(x_j,t_j)-1}\, \xi),(\sqrt{p(x_j,t_j)-1}\, \xi)\rangle\\
&\hspace{1 em}-\langle Y_j (\sqrt{p(y_j,s_j)-1}\, \xi),(\sqrt{p(y_j,s_j)-1}\, \xi)\rangle\\
&\leq C\left(p(x_j,t_j)-p(y_j,s_j)\right)^2\Big(\xi' M\xi+\frac{2}{j} \xi' M^2 \xi\Big)\\
&\leq C\left(|x_j-y_j|^2+|t_j-s_j|^2\right)(j^2|x_j-y_j|^2+j^3|x_j-y_j|^4)\\
& <C\left(j^2|x_j-y_j|^4+(j^2|x_j-y_j|^4)^{3/2}\right)
\end{split}
\] 
when $j$ is large. This is a contradiction, since $j^2|x_j-y_j|^4\rightarrow 0$ when $j\rightarrow \infty$. In the last two estimates we used Lipschitz continuity of $p$.
\end{proof}

By modifying the above proof we also get the uniqueness. For viscosity solutions we assume continuity on $\overline{\Omega}_T$.

\begin{lemma}
\label{lemma:uniqueness}
Viscosity solutions to \eqref{eq:equation} are unique.
\end{lemma}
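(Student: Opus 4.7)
The plan is to establish the comparison principle via doubling of variables, essentially repeating the scheme already carried out in the proof of Lemma \ref{lemma:reduced-test-functions} but now with two viscosity solutions in place of a single solution and a smooth test function. Assume $u_1$ and $u_2$ are viscosity solutions of \eqref{eq:equation} agreeing on $\partial_p \Omega_T$, and suppose for contradiction that $\sup_{\overline{\Omega}_T}(u_1-u_2)>0$. First I would perturb $u_1$ to $\tilde u_1 := u_1 - \sigma/(T-t)$ for a small $\sigma>0$; a short computation (test $u_1$ with $\phi + \sigma/(T-t)$) shows that $\tilde u_1$ is a \emph{strict} subsolution, so any admissible test function $\phi$ from above gains an extra $-(n+p)\sigma/(T-t_0)^2$ on the right of the viscosity inequality. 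For $\sigma$ small enough, $\sup(\tilde u_1 - u_2)>0$ is still attained strictly in the interior.

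Next I would form
\[
w_j(x,t,y,s) = \tilde u_1(x,t) - u_2(y,s) - \frac{j^2}{4}\abs{x-y}^4 - \frac{j}{2}(t-s)^2,
\]
let $(x_j,t_j,y_j,s_j)$ be an interior maximizer (which exists for $j$ large), and note that the usual argument gives $\frac{j^2}{4}\abs{x_j-y_j}^4 + \frac{j}{2}(t_j-s_j)^2 \to 0$. If $x_j = y_j$, the quartic penalty has vanishing gradient and vanishing Hessian, so freezing either slot yields a test function satisfying the hypotheses of Lemma \ref{lemma:reduced-test-functions}. Applying that lemma to the strict subsolution $\tilde u_1$ from above and to the supersolution $u_2$ from below produces
\[
j(t_j-s_j) \leq -\frac{\sigma}{(T-t_j)^2} \qquad\text{and}\qquad j(t_j-s_j) \geq 0,
\]
an immediate contradiction. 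If instead $x_j \neq y_j$, the parabolic theorem of sums supplies symmetric matrices $X_j,Y_j$ with
\[
(j(t_j-s_j),\, j^2\abs{x_j-y_j}^2(x_j-y_j),\, X_j) \in \ol{\mathcal P}^{2,+}\tilde u_1(x_j,t_j),
\]
\[
(j(t_j-s_j),\, j^2\abs{x_j-y_j}^2(x_j-y_j),\, Y_j) \in \ol{\mathcal P}^{2,-}u_2(y_j,s_j),
\]
satisfying the matrix inequality associated with $\Psi_j(x,y)=\frac{j^2}{4}\abs{x-y}^4$ exactly as displayed in the proof of Lemma \ref{lemma:reduced-test-functions}. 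Subtracting the strict sub- and supersolution inequalities at these jets with $\xi_j=(x_j-y_j)/\abs{x_j-y_j}$ gives
\[
\frac{(n+p(x_j,t_j))\sigma}{(T-t_j)^2} \leq \big[p(y_j,s_j)-p(x_j,t_j)\big]j(t_j-s_j) + R_j,
\]
where $R_j=(p(x_j,t_j)-2)\langle X_j\xi_j,\xi_j\rangle+\tr X_j - (p(y_j,s_j)-2)\langle Y_j\xi_j,\xi_j\rangle-\tr Y_j$. Both terms on the right are controlled exactly as in the final estimates of the proof of Lemma \ref{lemma:reduced-test-functions}: Lipschitz continuity of $p$ bounds the first term by $\sqrt2 C_1 j\abs{t_j-s_j}(\abs{x_j-y_j}+\abs{t_j-s_j})$, and the same $M$- and $M^2$-manipulation there bounds $R_j$ by $C(j^2\abs{x_j-y_j}^4 + (j^2\abs{x_j-y_j}^4)^{3/2})$; both vanish with $j$, contradicting the strictly positive left hand side. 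Symmetry in $(u_1,u_2)$ then forces $u_1 \equiv u_2$.

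The main obstacle lives entirely in the non-degenerate subcase and is genuinely two-headed: the singular behavior of $\Delta_\infty^N$ at $\nabla u = 0$ makes the degenerate subcase $x_j = y_j$ unavoidable and is what forces the appeal to the reduced test function characterization Lemma \ref{lemma:reduced-test-functions}, while the spatial-temporal variation of $p$ makes the Lipschitz assumption on $p$ indispensable for absorbing the $(p(x_j,t_j)-p(y_j,s_j))^2$-type mismatches into the vanishing quantity $j^2\abs{x_j-y_j}^4$. Once these two features are handled in tandem, the argument is a routine copy of the scheme already present in the proof of Lemma \ref{lemma:reduced-test-functions}.
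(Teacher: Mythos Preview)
Your proposal is correct and follows essentially the same route as the paper's own proof: perturb one solution by $-\sigma/(T-t)$ to create a strict subsolution, double variables with the quartic spatial penalty $\tfrac{j^2}{4}|x-y|^4+\tfrac{j}{2}(t-s)^2$, split into the degenerate case $x_j=y_j$ (where the penalty has vanishing gradient and Hessian, so the reduced test function characterization of Lemma~\ref{lemma:reduced-test-functions} applies and the time-derivative inequalities clash) and the non-degenerate case $x_j\neq y_j$ (where the parabolic theorem of sums plus the $M$, $M^2$ matrix estimate and Lipschitz continuity of $p$ drive the right hand side to zero). The only cosmetic difference is that you invoke Lemma~\ref{lemma:reduced-test-functions} explicitly in the degenerate case while the paper repeats that one-line argument inline; otherwise the two arguments are the same.
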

\begin{proof}
The proof is by contradiction: We assume that
$u$ and $v$ are viscosity solutions with the same boundary values and yet
\[
\begin{split}
u(x_0,t_0)-v(x_0,t_0)=\sup(u-v)>0.
\end{split}
\]
Further, by considering 
\[
\begin{split}
u-\frac{\eta}{T-t},
\end{split}
\]
we may assume that 
\[
\begin{split}
(n+p(x,t))u_t\leq \Delta_{p(x,t)}^N u-\frac{\eta}{T}
\end{split}
\]
in the viscosity sense when testing from above.

Let
\[
\begin{split}
w_j(x,t,y,s)=u(x,t)-v(y,s)-\Big(\frac{j^2}{4}\abs{x-y}^4+\frac{j}{2}\abs{t-s}^2\Big)
\end{split}
\]
and denote by $(x_j,t_j,y_j,s_j)$ the maximum point of $w_j$ in
$\ol \Om_T\times \ol \Om_T$. Since $(x_0,t_0)$ is a local maximum
for $u-v$, we may assume that
\[
\begin{split}
(x_j,t_j,y_j,s_j)\to (x_0,t_0,x_0,t_0),\quad \textrm{as} \quad j\to \infty
\end{split}
\]
and $(x_j,t_j)\, ,(y_j,s_j)\in \Om_T$.

We consider two cases: either $x_j=y_j$ infinitely often or $x_j\neq y_j$ for all $j$ large enough. First, denote
\[
\begin{split}
\vp(x,t,y,s)=\frac{j^2}{4}\abs{x-y}^4+\frac{j}{2}(t-s)^2
\end{split}
\]
and let  $x_j=y_j$.
Then
$(y,s)\mapsto v(y,s)+\vp(x_j,t_j,y,s)$,
has a local minimum at $(y_j,s_j)$, and $(x,t)\mapsto u(x,t)-\vp(x,t,y_j,s_j)$ a local maximum at $(x_j,t_j)$. From this we deduce  (denote with abuse of notation $\vp(y,s)=\vp(x_j,t_j,y,s)$ in the next display)
\[
\begin{split}
(n+p(y_j,s_j))j(t_j-s_j)=(n+p(y_j,s_j))\vp_s(y_j,s_j)  \ge 0
\end{split}
\]
and (denote with abuse of notation $\vp(x,t)=\vp(x,t,y_j,s_s)$ in the next display)
\[
\begin{split}
(n+p(x_j,t_j))(t_j-s_j)=(n+p(x_j,t_j))\vp_t(x_j,t_j)  \le -\eta/T.
\end{split}
\]
Thus
\[
\begin{split}
\frac{\eta}{T}\le  (n+p(x_j,t_j))(t_j-s_j)-(n+p(y_j,s_j))(t_j-s_j)=0,
\end{split}
\]
a contradiction.

Next we consider the case $y_j\neq x_j$. For the following notation, we refer to \cite{crandallil92} and \cite{juutinenlm01}. We also use the parabolic theorem of sums for $w_j$
 which implies that there exist symmetric matrices $X_j,Y_j$ such that $Y_j-X_j$ is positive semidefinite and
\[
\begin{split}
&\Big(j(t_j-s_j),\,j^2\abs{x_j-y_j}^2(x_j-y_j),\,X_j\Big)\in  \ol{\mathcal{P}}^{2,+}u(y_j,s_j)\\
&\Big(j(t_j-s_j),\,j^2\abs{x_j-y_j}^2(x_j-y_j),\,Y_j\Big)\in \ol{\mathcal{P}}^{2,-}v(x_j,t_j).
\end{split}
\]
Using \eqref{eq:counter-proposition} and the assumptions on $u$, we get
\[
\begin{split}
\frac{\eta}{T}&\le -(n+p(y_j,s_j))j(t_j-s_j)+(n+p(x_j,t_j))j(t_j-s_j)\\
&\hspace{1 em}+ (p(x_j,t_j)-2)\langle Y_j \frac{(x_j-y_j)}{\abs{x_j-y_j}},\, \frac{(x_j-y_j)}{\abs{x_j-y_j}}\rangle+\tr(Y_j)\\
& \hspace{1 em} -(p(y_j,s_j)-2)\langle X_j  \frac{(x_j-y_j)}{\abs{x_j-y_j}},\,\frac{(x_j-y_j)}{\abs{x_j-y_j}}\rangle-\tr(X_j).
\end{split}
\]
The right hand side can be estimated similarly as in the previous lemma to obtain a contradiction.
\end{proof}

\section*{Appendix}
Let us show Cases 1,2, and 3 from the proof of Lemma \ref{pos.laajennus}. Recall that the comparison function in that lemma was
 \[
\Psi(x,t)=\left(\frac19\right)^3\inf_{y\in B_r(0)}u_\eps(y,0)\frac{(\frac13 r)^{2(n+1)^2}}{(t+(\frac13 r)^2)^{(n+1)^2}}\left(9-\frac{|x|^2}{t+(\frac13 r)^2}\right)^2_+
\]
Starting from Case 1, we need to show that for $e\in \R^n$, $|e|=1$, 
\[
\Psi(0,t)\leq \frac12[\Psi(0,t-\frac{\eps^2}{2})+\Psi(\eps e,t-\frac{\eps^2}{2})].
\]
Since
\[
\Psi(0,t)=\frac19 \inf_{y\in B_r(0)} u_\eps(y,0)\frac{(\frac13 r)^{2(n+1)^2}}{[t+(\frac13 r)^2]^{(n+1)^2}},
\]
\[
\Psi(0,t-\frac{\eps^2}{2})=\frac19 \inf_{y\in B_r(0)} u_\eps(y,0)\frac{(\frac13 r)^{2(n+1)^2}}{[t-\frac{\eps^2}{2}+(\frac13 r)^2]^{(n+1)^2}},
\]
and
\[
\Psi(\eps e,t-\frac{\eps^2}{2})=\left(\frac19\right)^3 \inf_{y\in B_r(0)} u_\eps(y,0)\frac{(\frac13 r)^{2(n+1)^2}}{[t-\frac{\eps^2}{2}+(\frac13 r)^2]^{(n+1)^2}}\left[9-\frac{\eps^2}{t-\frac{\eps^2}{2}+(\frac13 r)^2}\right]^2,
\]
we have to show that
\[
1\leq \frac12 \left[\frac{t+(\frac13 r)^2}{t-\frac{\eps^2}{2}+(\frac13 r)^2}\right]^{(n+1)^2}\left[1+\left(\frac19\right)^2\left(9-\frac{\eps^2}{t-\frac{\eps^2}{2}+(\frac13 r)^2}\right)^2\right]=:A_1.
\]
Since
\[
1+\left(\frac19\right)^2\left(9-\frac{\eps^2}{t-\frac{\eps^2}{2}+(\frac13 r)^2}\right)^2\geq 2-\frac29 \left(\frac{\eps^2}{t-\frac{\eps^2}{2}+(\frac13 r)^2}\right),
\]
we get
\begin{align*}
A_1&\geq \left[\frac{t+(\frac13 r)^2}{t-\frac{\eps^2}{2}+(\frac13 r)^2}\right]\left[1-\frac19 \left(\frac{\eps^2}{t-\frac{\eps^2}{2}+(\frac13 r)^2}\right)\right]\\
& =\left[1+\frac12\left(\frac{\eps^2}{t-\frac{\eps^2}{2}+(\frac13 r)^2}\right)\right]\left[1-\frac19 \left(\frac{\eps^2}{t-\frac{\eps^2}{2}+(\frac13 r)^2}\right)\right]\\
& \geq 1-\frac19\left(\frac{\eps^2}{t-\frac{\eps^2}{2}+(\frac13 r)^2}\right)+\frac12 \left(\frac{\eps^2}{t-\frac{\eps^2}{2}+(\frac13 r)^2}\right)-\frac{1}{18}\left(\frac{\eps^2}{t-\frac{\eps^2}{2}+(\frac13 r)^2}\right)\\
& \geq 1,
\end{align*}
and Case 1 is complete.

In Case 2, $|x|=\eta\eps$ for some $0<\eta<1$, and we need to show that
\[
\Psi(x,t)\leq \frac12[\Psi(0,t-\frac{\eps^2}{2})+\Psi(x+\frac{x}{|x|}\eps,t-\frac{\eps^2}{2})].
\]
Since 
\[
\Psi(x,t)=\left(\frac19\right)^3\inf_{y\in B_r(0)} u_\eps(y,0)\frac{(\frac13 r)^{2(n+1)^2}}{[t+(\frac13 r)^2]^{(n+1)^2}}\left[9-\frac{|\eta \eps|^2}{t+(\frac13 r)^2}\right]^2,
\]
\[
\Psi(0,t-\frac{\eps^2}{2})=\frac19\inf_{y\in B_r(0)} u_\eps(y,0)\frac{(\frac13 r)^{2(n+1)^2}}{[t-\frac{\eps^2}{2}+(\frac13 r)^2]^{(n+1)^2}},
\]
and
\[
\Psi(x+\frac{x}{|x|}\eps,t-\frac{\eps^2}{2})=\left(\frac19\right)^3\inf_{y\in B_r(0)} u_\eps(y,0)\frac{(\frac13 r)^{2(n+1)^2}}{[t-\frac{\eps^2}{2}+(\frac13 r)^2]^{(n+1)^2}}\left[9-\frac{|(1+\eta)\eps|^2}{t-\frac{\eps^2}{2}+(\frac13 r)^2}\right]^2,
\]
it is sufficient to show that
\[
\left[9-\frac{|\eta \eps|^2}{t+(\frac{r}{3})^2}\right]^2\leq \frac12 \left(\frac{t+(\frac13 r)^2}{t-\frac{\eps^2}{2}+(\frac13 r)^2}\right)\left[9^2+\left(9-\frac{(1+\eta)^2\eps^2}{t-\frac{\eps^2}{2}+(\frac13 r)^2}\right)^2\right].
\]

Notice that in Cases 1 and 2 we don't need to take the cut-off into account, since $\Psi(x,t)>0$ when $|x|\leq 2\eps$.

Recalling that $r\geq 9\eps$, we have
\[
9^2+\left(9-\frac{(1+\eta)^2\eps^2}{t-\frac{\eps^2}{2}+(\frac13 r)^2}\right)^2\geq 144,
\]
from which it follows that
\begin{align*}
\frac12 &\left(\frac{t+(\frac13 r)^2}{t-\frac{\eps^2}{2}+(\frac13 r)^2}\right)\left[9^2+\left(9-\frac{(1+\eta)^2\eps^2}{t-\frac{\eps^2}{2}+(\frac13 r)^2}\right)^2\right]\\
& =\frac12 \left(1+\frac12\frac{\eps^2}{t-\frac{\eps^2}{2}+(\frac13 r)^2}\right)\left[9^2+\left(9-\frac{(1+\eta)^2\eps^2}{t-\frac{\eps^2}{2}+(\frac13 r)^2}\right)^2\right]\\
& \geq \frac12 \left[9^2+\left(9-\frac{(1+\eta)^2\eps^2}{t-\frac{\eps^2}{2}+(\frac13 r)^2}\right)^2\right]+36 \frac{\eps^2}{t-\frac{\epsilon^2}{2}+(\frac13 r)^2}. 
\end{align*}
Hence, it is enough to show that 
\begin{align}\label{Case2}
&\left[9-\frac{|k\eps|^2}{t+(\frac{r}{3})^2}\right]^2-\frac12\left[9^2+\left(9-\frac{(1+k)^2\eps^2}{t-\frac{\eps^2}{2}+(\frac13 r)^2}\right)^2\right]\nonumber\\
& \leq 36 \frac{\eps^2}{t-\frac{\eps^2}{2}+(\frac13 r)^2}.
\end{align}
The left hand side can be written as
\[
-18\frac{|k\eps|^2}{t+(\frac{r}{3})^2}+\left(\frac{|k\eps|^2}{t+(\frac{r}{3})^2}\right)^2+9\left(\frac{(1+\eta)^2\eps^2}{t-\frac{\eps^2}{2}+(\frac13 r)^2}\right)-\frac12 \left(\frac{(1+\eta)^2\eps^2}{t-\frac{\eps^2}{2}+(\frac13 r)^2}\right)^2. 
\]
Since 
\[
9\left(\frac{(1+\eta)^2\eps^2}{t-\frac{\eps^2}{2}+(\frac13 r)^2}\right)\leq 36 \frac{\eps^2}{t-\frac{\epsilon^2}{2}+(\frac13 r)^2}
\]
and
\[
\left(\frac{|k\eps|^2}{t+(\frac{r}{3})^2}\right)^2\leq \frac{|k\eps|^2}{t+(\frac{r}{3})^2},
\]
inequality \eqref{Case2} holds, and Case 2 is proved.

In Case 3, we need to show that if $|x|\geq \eps$, then
\[
\frac12[\Psi(x+\frac{x}{|x|}\eps,t-\frac{\eps^2}{2})+\Psi(x-\frac{x}{|x|}\eps,t-\frac{\eps^2}{2})]\geq \Psi(x,t).
\]
Suppose first that 
\[
\frac{(|x|+\eps)^2}{t-\frac{\eps^2}{2}+(\frac{r}{3})^2}<9.
\]
Then also 
\[
\frac{|x|^2}{t+(\frac{r}{3})^2}<9\ \text{and}\ \frac{(|x|-\eps)^2}{t-\frac{\eps^2}{2}+(\frac{r}{3})^2}<9.
\]
Since $\left(\frac19\right)^3\inf_{y\in B_r(0)} u_\eps(y,0) \left(\frac{r}{3}\right)^{2(n+1)^2}$ cancels out, it is enough to show that
\begin{align*}
\left(\frac{t-\frac{\eps^2}{2}+(\frac{r}{3})^2}{t+(\frac{r}{3})^2}\right)&\left(9-\frac{|x|^2}{t+(\frac{r}{3})^2}\right)^2\\
& \leq \frac12\left[\left(9-\frac{(|x|+\eps)^2}{t-\frac{\eps^2}{2}+(\frac{r}{3})^2}\right)^2 +\left(9-\frac{(|x|-\eps)^2}{t-\frac{\eps^2}{2}+(\frac{r}{3})^2}\right)^2 \right],
\end{align*}
or equivalently,
\begin{align*}
&\left[9-\frac{|x|^2}{t+(\frac{r}{3})^2}\right]^2\\
& \leq \frac12 \left(1+\frac12 \frac{\eps^2}{t-\frac{\eps^2}{2}+(\frac{r}{3})^2}\right)\left[\left(9-\frac{(|x|+\eps)^2}{t-\frac{\eps^2}{2}+(\frac{r}{3})^2}\right)^2+\left(9-\frac{(|x|-\eps)^2}{t-\frac{\eps^2}{2}+(\frac{r}{3})^2}\right)^2\right].
\end{align*}

This is equivalent to showing that
\begin{align*}
18\left(\frac{\eps^2}{t-\frac{\eps^2}{2}+(\frac13 r)^2}\right)&\leq \frac12 \left(\frac{(|x|+\eps)^4+(|x|-\eps)^4}{[t-\frac{\eps^2}{2}+(\frac13 r)^2]^2}\right)-\frac{|x|^4}{[t+(\frac13 r)^2]^2}\\
& +\frac14 \left(\frac{\eps^2}{t-\frac{\eps^2}{2}+(\frac13 r)^2}\right)I,
\end{align*}
where
\[
I=\left(9-\frac{(|x|+\eps)^2}{t-\frac{\eps^2}{2}+(\frac{r}{3})^2}\right)^2+\left(9-\frac{(|x|-\eps)^2}{t-\frac{\eps^2}{2}+(\frac{r}{3})^2}\right)^2.
\]
Since
\begin{align*}
\frac{|x|^4}{[t+(\frac13 r)^2]^2}&=\left(1-\frac12 \frac{\eps^2}{t+(\frac r3)^2}\right)^2\left(\frac{|x|^4}{[t-\frac{\eps^2}{2}+(\frac13 r)^2]^2}\right)\\
&\leq \left(1-\frac12 \frac{\eps^2}{t+(\frac r3)^2}\right)\left(\frac{|x|^4}{[t-\frac{\eps^2}{2}+(\frac13 r)^2]^2}\right)
\end{align*}
and 
\[
(|x|+\eps)^4+(|x|-\eps)^4\leq 2|x|^4+12|x|^2\eps^2,
\]
we get an estimate
\begin{align*}
\frac12& \left(\frac{(|x|+\eps)^4+(|x|-\eps)^4}{[t-\frac{\eps^2}{2}+(\frac13 r)^2]^2}\right)-\frac{|x|^4}{[t+(\frac13 r)^2]^2}\\
& \geq \frac12 \left(\frac{(|x|+\eps)^4+(|x|-\eps)^4}{[t-\frac{\eps^2}{2}+(\frac13 r)^2]^2}\right)-\left(1-\frac12 \frac{\eps^2}{t+(\frac r3)^2}\right)\left(\frac{|x|^4}{[t-\frac{\eps^2}{2}+(\frac13 r)^2]^2}\right)\\
& \geq 6\left(\frac{|x|^2}{t-\frac{\eps^2}{2}+(\frac13 r)^2}\right)\left(\frac{\eps^2}{t-\frac{\eps^2}{2}+(\frac13 r)^2}\right)+\frac12 \left(\frac{|x|^2}{t-\frac{\eps^2}{2}+(\frac13 r)^2}\right)^2\left(\frac{\eps^2}{t-\frac{\eps^2}{2}+(\frac13 r)^2}\right).
\end{align*}
Hence, it is sufficient to show that
\begin{align*}
18&\left(\frac{\eps^2}{t-\frac{\eps^2}{2}+(\frac13 r)^2}\right)\\
& \leq 6\left(\frac{|x|^2}{t-\frac{\eps^2}{2}+(\frac13 r)^2}\right)\left(\frac{\eps^2}{t-\frac{\eps^2}{2}+(\frac13 r)^2}\right)+\frac12\left(\frac{|x|^2}{t-\frac{\eps^2}{2}+(\frac13 r)^2}\right)^2\left(\frac{\eps^2}{t-\frac{\eps^2}{2}+(\frac13 r)^2}\right)\\
& + \frac14 \left(\frac{\eps^2}{t-\frac{\eps^2}{2}+(\frac13 r)^2}\right)I.
\end{align*}
If
\[
\frac{|x|^2}{t-\frac{\eps^2}{2}+(\frac13 r)^2}\geq \frac52,
\]
the previous inequality clearly holds. If
\[
\frac{|x|^2}{t-\frac{\eps^2}{2}+(\frac13 r)^2}\leq \frac52,
\]
then $I\geq 72$ and the previous inequality holds again.

When
\[
\frac{(|x|+\eps)^2}{t-\frac{\eps^2}{2}+(\frac{r}{3})^2}\geq 9,
\]
we need to show that
\[
\left(9-\frac{|x|^2}{t+(\frac{r}{3})^2}\right)^2_+\leq \frac12 \left(1+\frac12 \frac{\eps^2}{t-\frac{\eps^2}{2}+(\frac{r}{3})^2}\right)\left(9-\frac{(|x|-\eps)^2}{t-\frac{\eps^2}{2}+(\frac{r}{3})^2}\right)^2_+,
\]
and this follows by the previous estimates of Case 3.

Let us next show that $\Psi$ is a viscosity subsolution to
\[
(n+2)u_t(x,t)=\Delta u(x,t).
\]
Denote
\[
|z|^2=\frac{|x|^2}{t+r^2}.
\]
Then
\begin{align*}
&(n+2)\Psi_t(x,t)-\Delta \Psi(x,t)\\
=& \frac{(\frac r3)^{2(n+1)^2}}{(t+(\frac r3)^2)^{(n+1)^2+1}}(9-|z|^2)_+\\
& \left[-(n+2)(n+1)^2(9-|z|^2)_+ +2(n+2)|z|^2+4n-\frac{8|z|^2}{(9-|z|^2)_+}\right]\\
& =:\frac{(\frac r3)^{2(n+1)^2}}{(t+(\frac r3)^2)^{(n+1)^2+1}}(9-|z|^2)_+ A.
\end{align*}
When $a:=9-|z|^2>0$, we get
\begin{align*}
aA&=-(n+2)(n+1)^2a^2 +2(n+2)(9-a)a+4na-8(9-a)\\
&=\left[-(n+2)(n+1)^2-2(n+2)\right]a^2+22(n+2)a-72<0
\end{align*}
when $a=8$, and the discriminant is
\[
D=22^2(n+2)^2-4\times 72(n+2)[(n+1)^2+2]<0,
\]
since $(n+1)^2+2>2(n+2)$. Hence $A<0$ when $0<a\leq 9$.

That $\Psi$ is a subsolution in $\Omega_T$ follows from the fact that the maximum of two subsolutions is a subsolution.

%\bibliography{citations2}
%\bibliographystyle{alpha}

\end{document}